\tikzstyle{vecArrow} = [thick, decoration={markings,mark=at position
\tikzstyle{innerWhite} = [semithick, white,line width=1.4pt, shorten >= 4.5pt]
\tikzstyle{vecEq} = [thick,
\newtheoremstyle{user}
{}
{}
{\normalfont}
{}
{\bfseries}
{.}
{2ex}
{\thmname{#1}\thmnumber{ #2}\thmnote{ \textnormal{#3}}}
\theoremstyle{user}
\newtheorem{definition}{Definition}[section]
\newtheorem{assumption}[definition]{Assumption}
\newtheorem{example}[definition]{Example}
\newtheorem{remark}[definition]{Remark}
\theoremstyle{theorem}
\newtheorem{theorem}[definition]{Theorem}
\newtheorem{proposition}[definition]{Proposition}
\newtheorem{corollary}[definition]{Corollary}
\newtheorem{lemma}[definition]{Lemma}
\newcommand{\un}[1]{\underline{#1}}
\newcommand{\ov}[1]{\overline{#1}}
\newcommand\numberthis{\addtocounter{equation}{1}\tag{\theequation}}
\newcommand{\Mod}{\mathsf{Mod}}
\newcommand{\BA}{{\mathbb{A}}}
\newcommand{\CC}{{\mathbb{C}}}
\newcommand{\DD}{{\mathbb{D}}}
\newcommand{\EE}{{\mathbb{E}}}
\newcommand{\GG}{{\mathbb{G}}}
\newcommand{\HH}{{\mathbb{H}}}
\newcommand{\NN}{{\mathbb{N}}}
\newcommand{\PP}{{\mathbb{P}}}
\newcommand{\QQ}{{\mathbb{Q}}}
\newcommand{\RR}{{\mathbb{R}}}
\newcommand{\TT}{{\mathbb{T}}}
\newcommand{\UU}{{\mathbb{U}}}
\newcommand{\ZZ}{{\mathbb{Z}}}
\newcommand{\mA}{{\mathcal A}}
\newcommand{\mE}{{\mathcal E}}
\newcommand{\mF}{{\mathcal F}}
\newcommand{\mM}{{\mathcal M}}
\newcommand{\mO}{{\mathcal O}}
\newcommand{\mP}{{\mathcal P}}
\newcommand{\mQ}{{\mathcal Q}}
\newcommand{\mR}{{\mathcal R}}
\newcommand{\mT}{{\mathcal T}}
\newcommand{\mU}{{\mathcal U}}
\newcommand{\mV}{{\mathcal V}}
\newcommand{\Ff}{{\mathfrak{f}}}
\newcommand{\Fr}{{\mathfrak{r}}}
\newcommand{\crU}{\mathscr{U}}
\DeclareMathOperator{\ad}{ad}
\DeclareMathOperator{\Hilb}{Hilb}
\DeclareMathOperator{\Sym}{Sym}
\DeclareMathOperator{\id}{id}
\newcommand{\Cone}{\mathop{\rm Cone}\nolimits}
\newcommand{\Ext}{\mathcal{E}\textnormal{xt}}
\newcommand{\Bl}{\mathop{\rm Bl}\nolimits}
\newcommand{\Dec}{\text{Dec}}
\newcommand{\Qfr}{^{\ov{n}}_{\infty}Q}
\newcommand{\ovninfty}{{}^{\ov{n}}_{\infty}}
\newcommand{\mufr}{^{\ov{n}}_{\infty}\mu}
\newcommand{\nufr}{^{\ov{n}}_{\infty}\nu}
\newcommand{\End}{{\rm End}}
\newcommand{\Rep}{{\rm Rep}}
\newcommand{\nd}{\mathop{\rm nd}\nolimits}
\newcommand{\pe}{\mathop{\rm pe}\nolimits}
\newcommand{\pa}{\mathop{\rm pa}\nolimits}
\newcommand{\fr}{\mathop{\rm fr}\nolimits}
\newcommand{\tet}{\mathop{\rm t}\nolimits}
\newcommand{\bL}{{\mathsf{L}}}
\newcommand{\Linv}{\mathsf{L}_{\inv}}
\newcommand{\bS}{{\mathsf{S}}}
\newcommand{\bT}{{\mathsf{T}}}
\newcommand{\bR}{\mathsf{R}}
\newcommand{\pt}{{\mathsf{pt}}}
\newcommand{\congpf}{\xymatrix@1@=15pt{\ar[r]^-\sim&}}
\newcommand{\Mf}{\mathfrak{M}}
\newcommand{\MQ}{\mathcal{M}_Q}
\newcommand{\Flag}{\mathrm{Flag}}
\newcommand{\ch}{\mathrm{ch}}
\newcommand{\td}{\mathrm{td}}
\newcommand{\Coh}{\mathrm{Coh}}
\newcommand{\Perf}{\mathrm{Perf}}
\newcommand{\Stab}{\mathrm{Stab}}
\newcommand{\rep}{\mathrm{Rep}}
\newcommand{\Hom}{\mathrm{Hom}}
\newcommand{\uHom}{\mathcal{H}\mathrm{om}}
\newcommand{\vir}{\mathrm{vir}}
\newcommand{\sym}{\textnormal{sym}}
\newcommand{\SSym}{\textnormal{SSym}}
\newcommand{\rk}{\mathrm{rk}}
\newcommand{\rig}{\mathrm{rig}}
\newcommand{\inv}{{\mathrm{wt}_0}}
\newcommand{\inva}{\mathrm{in}}
	\def\MR#1{}
\renewcommand{\Ext}{\textup{Ext}}
\newcommand\sqmatrix[2][c]{%
  \fixTABwidth{T}%
  \setbox0=\hbox{$\tabbedCenterstack{#2}$}%
  \setstackgap{L}{\dimexpr\maxTAB@width+\tabbed@gap}%
  \tabbedCenterstack[#1]{#2}%
}
\begin{document}

\baselineskip=16pt
\parskip=5pt

    \title{Universal Virasoro Constraints for Quivers with Relations}

\author{Arkadij Bojko}
\email{abojko@gate.sinica.edu.tw}

\address{Institute of Mathematics, Academia Sinica,
6F, Astronomy-Mathematics Building,
No. 1, Sec. 4, Roosevelt Road,
Taipei 10617, Taiwan}

\date{\today}
\maketitle
\begin{abstract}
Following our reformulation of sheaf-theoretic Virasoro constraints with applications to curves and surfaces joint with Lim--Moreira, I describe in the present work the quiver analog. After phrasing a universal approach to Virasoro constraints for moduli of quiver-representations, I prove them for any finite quiver with relations, with frozen vertices, but without cycles. I use partial flag varieties which are special cases of moduli spaces of framed representations as a guiding example throughout. These results are applied to give an independent proof of Virasoro constraints for all Gieseker semistable sheaves on $\PP^2$ and $\PP^1 \times \PP^1$ by using derived equivalences to quivers with relations. Combined with an existing universality argument for Virasoro constraints on Hilbert schemes of points on surfaces, this leads to the proof of this rank 1 case for any $S$ which is independent of the previous results in Gromov--Witten theory.
\end{abstract}
\setcounter{tocdepth}{1} 
\tableofcontents

\section{Introduction}
Witten in his influential work \cite{witten} conjectured that the descendant integrals on $\ov{\mM}_{g,n}$ are determined by a family of partial differential equations called the KdV hierarchy together with the string equation. This came to be known as Witten's conjecture and it was proved in the celebrated work of Kontsevich \cite{kontsevich}. Its legendary status is supported by the many different proofs that followed, e.g. in the works of Okounkov--Pandharipande \cite{OPwitten} and Mirzakhani \cite{mirzakhani}. 

The first appearance of the Virasoro algebra in connection with Witten's conjecture traces back to \cite{DVV} where it was recast in terms of differential operators $L_k$ for $k\geq -1$ satisfying Virasoro commutation relations. This perspective garnered further popularity when \cite{ehx} proposed how to extend it to $\ov{\mM}_{g,n}(X)$ for some varieties $X$. This was followed by a further generalization by Katz (presented in \cite{EJX}) dealing with off-diagonal Hodge cohomology. The methods developed by Givental in \cite{givental} were applied in \cite{flag, grass, iritani, teleman} to prove Virasoro constraints in many cases. An entirely orthogonal approach was used by Okounkov--Pandharipande \cite{OP} to address smooth curves.

The correspondence \cite{GW/PT} relating descendent Gromov--Witten (GW) invariants and descendent Pandharipande--Thomas (PT) invariants of projective toric 3-folds naturally led to the question of compatible Virasoro constraints on the PT side. This has been answered affirmatively in the stationary setting in \cite{moop} and motivated further investigations of Virasoro constraints for moduli of sheaves in \cite{moreira, bree}. The proofs addressing Hilbert schemes for surfaces found in \cite{moop, moreira} relied on the dimensional reduction from PT invariants of toric 3-folds and a universality argument.

The problem of motivating sheaf-theoretic Virasoro constraints and giving them meaning was resolved in \cite{BML} where we showed that they can be derived from a natural conformal element on the homology of a large stack containing all moduli schemes of sheaves. This homology was given a vertex algebra structure by Joyce \cite{Jo17} which was used to express wall-crossing formulae in \cite{GJT, Jo21}. We used this conceptual wording of Virasoro constraints compatible with wall-crossing to prove them in many cases including (semi)stable higher rank torsion-free sheaves on surfaces with only diagonal Hodge cohomology groups.

One of the most striking observations resulting from our approach was its apparent generality. As there was nothing special about working with sheaves, one could hope to introduce and prove Virasoro constraints for moduli of objects in any nice additive category with a well-behaved deformation-obstruction theory. The present work does so for representations of quivers with relations and frozen vertices.\footnote{The current version is an update influenced  by i) some comments by previous referees ii) the follow-up work by Lim--Moreira \cite{MW} which used some of the ideas presented here iii) a fruitful email exchange with P. Bousseau.} 

I describe a useful application of the above general framework to sheaves on surfaces. It uses derived equivalences of the varieties $\PP^2$ and $\PP^1\times \PP^1$ to quivers with relations recalled in §\ref{sec:P2}. Using Virasoro constraints for quivers, I prove them for all Gieseker semistable sheaves on the above two surfaces\footnote{And on $\Bl_{\pt}\PP^2$ following the same proof.}. Here Gieseker stability is used in the sense of \cite[Definition 1.2.4]{HL}, so it includes rank 0 sheaves. Due to the universality of Virasoro constraints for $\Hilb^n(S)$ which first appeared in \cite{moreira} in a more restrictive form and was extended to full generality by Joyce in \cite{Jo23}, it is sufficient to prove the Hilbert scheme constraints for $S=\PP^2,\PP^1\times \PP^1$ to establish them for any surface $S$. The previously available proof relied on results in GW theory through \cite{moop} so the present work severs this connection and thus establishes sheaf-theoretic Virasoro constraints for surfaces as an independent fundamental phenomenon. 

Lastly, Virasoro constraints for semistable one-dimensional sheaves on $\PP^2$ proved here are used to study relations on generators of the cohomology of their moduli stacks in \cite{KPMW}.

\subsection{Results about intersection theory of classical moduli schemes}
If $Q$ is a quiver, and $\mR$ is a two-sided ideal of relations as in Definition \ref{def:homorela}, then the representations of $(Q,\mR)$ can have many interpretations even beyond their algebraic origin.
\begin{enumerate}
    \item Any finite-dimensional algebra $A$ over $\CC$ can be modelled by some $(Q,\mR)$. This is because the category $A$-mod of finite dimensional $A$-left modules is equivalent to the category $\rep(Q,\mR)$ of finite dimensional representations for some $(Q,\mR)$ (see \cite[§4.1]{benson}). Studying moduli schemes of semistable A-modules corresponds therefore to working with semistable representations.
    \item Stable framed quiver representations from Definition \ref{def:framedrep1} for an acyclic quiver $Q$ have a particularly nice description. Starting for example from $Q=A_{l-1}$ as depicted in \eqref{Eq:equiv},  one can identify any length $l$ partial flag variety with a moduli scheme of stable framed representations (see Example \ref{ex:flag}). More generally, any moduli scheme of framed representations of an acyclic quiver can be constructed from a point as an iterated Grassmann bundle by \cite[Theorem 4.10]{reineke}. Any statement about the intersection theory of such a moduli scheme can thus be phrased in terms of identities for symmetric polynomials. 
    \item If $X$ is a smooth projective variety that admits a strong full exceptional collection, then it is derived equivalent to a quiver with relations whose vertices are the exceptional objects of the collection. This was shown by Bondal in \cite[Theorem 6.2]{bondal}, and it applies to any del Pezzo or toric surface $S$ by \cite{Orlov, Kawamata}. For these two classes of surfaces, one should be able to use the derived equivalences to express $\Hilb^n(S)$ or more generally Gieseker semistable sheaves in terms of Bridgeland stability conditions for quivers. Due to the universality of Virasoro constraints for $\Hilb^n(S)$, it is sufficient to treat the two cases $S=\PP^2,\PP^1\times \PP^1$. I do so in §\ref{sec:P2} together with $S=\Bl_{\pt}\PP^2$ culminating in Theorem \ref{thm:P2indep}.\footnote{The case $S=\Bl_{\pt}\PP^2$ was added because in the case of dimension 1 semistable sheaves this is a strictly speaking new result, and I was already using all the references necessary to prove it anway. It does not play a major role in the present work, so I will not state it in the main theorems in the introduction.}
\end{enumerate}
The perspectives offered by each one of these points are studied in the main body of this work. Weaker versions of the results proved here are stated in the next theorem -- the full statements allow strictly semistable objects. The definitions of Virasoro constraints that are being addressed will be recalled in the next subsection.
\begin{theorem}
\label{thm:first}
\begin{enumerate}[label = (\Roman*)]
    \item For a finite dimensional algebra $A$ over $\CC$, fix a Bridgeland stability condition $\sigma$ constructed on the heart $A$-mod of $D^b(A)$ by using \cite[Proposition 5.3]{bridgeland}. The moduli scheme $M^{\sigma}_{\alpha}$ of $\sigma$-semistable left-modules with class $\alpha\in K^0(A)$ satisfies Virasoro constraints when there are no strictly semistables if there exists such a stability condition $\sigma'$ with $M^{\sigma'}_{\beta}$ satisfying Virasoro constraints for all $\beta$. One may take $A$ to be the path algebra of $(Q,\mR)$ for an acyclic quiver $Q$. In this case, the condition on the existence of $\sigma'$ is satisfied, and the slope stability from \eqref{eq:mustability} is an example of a Bridgeland stability $\sigma$.
\item Let $(Q, \mR)$ be a quiver with relations, then for any dimension vector $\ov{d}$ and any framing vector $\ov{n}$, the moduli scheme of stable framed representation $M^{\ov{n}}_{Q,\ov{d}}$ from Definition \ref{def:framedrep1} satisfies Virasoro constraints. By Example \ref{ex:flag}, this includes Virasoro constraints for partial flag varieties. 
\item Let $S=\PP^2$ or $\PP^1\times \PP^1$. The moduli spaces $M^{p}_{r,d,n}$ of Gieseker stable sheaves on $S$ with Chern character $(r,d,n)\in H^\bullet(S)$ satisfy Virasoro constraints whenever there are no strictly semistables.\footnote{} 
\end{enumerate}
\end{theorem}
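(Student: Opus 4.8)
The plan is to derive all three parts from a single stability-independent (\emph{universal}) Virasoro constraint living on the homology of the master stack $\mathcal{M}$ of all representations of $(Q,\mR)$, in direct analogy with the sheaf-theoretic treatment of \cite{BML}. First I would equip $\mathcal{M}$ with Joyce's vertex algebra structure, built from the universal complex and the Euler pairing on $K^0(\rep(Q,\mR))$, and construct a conformal element encoding the weight-zero Virasoro operators $L_k$. Acyclicity of $Q$ ensures that the path algebra $\CC Q/\mR$ is finite-dimensional, so that $\rep(Q,\mR)$ has finite-dimensional $\Ext$-groups, a well-defined Euler form, and moduli stacks of finite type; this is what makes the conformal element and the whole framework well-defined. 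The universal statement to be proved is that a distinguished class on $\mathcal{M}$ is annihilated by the $L_k$, the quiver analog of the main theorem of \cite{BML}.

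For part (II), a stable framed representation has no nontrivial automorphisms, so $M^{\ov n}_{Q,\ov d}$ is a fine, projective (by \cite{reineke}) moduli scheme carrying a universal family; the constraints then follow by specializing the universal constraint to the locus of stable framed objects and integrating against the honest fundamental class, with no correction terms since there are no strictly semistables. For the partial flag varieties of Example \ref{ex:flag} these schemes are iterated Grassmann bundles, on which the statement recovers classical identities for symmetric functions in the spirit of \cite{grass,flag}. The base case required by part (I) is even simpler: for the Bridgeland stability whose only stable objects are the simples $S_i$, every $M^{\sigma'}_\beta$ is a point or empty, so Virasoro holds trivially.

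For part (I), I would run the wall-crossing machinery of \cite{BML}: the jump between the invariants attached to $\sigma$ and to $\sigma'$ is expressed through iterated Lie brackets in Joyce's vertex algebra, and the conformal element is compatible with this bracket, so that Virasoro for every $M^{\sigma'}_\beta$ propagates to $M^{\sigma}_\alpha$ as long as the latter has no strictly semistables; the simple-objects stability above furnishes $\sigma'$ in the acyclic case, and one checks that slope stability \eqref{eq:mustability} is of Bridgeland type in the sense of \cite{bridgeland}. For part (III), the input is Bondal's derived equivalence $D^b(S)\simeq D^b(Q,\mR)$ for $S=\PP^2,\PP^1\times\PP^1$ coming from the strong full exceptional collection (§\ref{sec:P2}). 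A derived equivalence preserves the Euler pairing and the deformation theory, hence induces an isomorphism of Joyce vertex algebras; transporting the universal constraint just proved on the quiver side yields a universal constraint on the master stack of $D^b(S)$. One identifies Gieseker (semi)stability on $S$ with a Bridgeland stability on an appropriate tilt of $\rep(Q,\mR)$, and applies the wall-crossing of part (I) to descend the universal constraint to the classical Virasoro constraints for $M^p_{r,d,n}$ on the no-strictly-semistable locus.

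The main obstacle is matching the conformal elements across the derived equivalence in part (III). Although the equivalence manifestly preserves the Euler form (hence the vertex algebra) and the Ext-pairing defining the deformation theory, the operators $L_k$ are realized concretely through the Chern character and the Todd-type normalizations of the descendent algebra, on which a derived equivalence acts in a nontrivial, class-dependent fashion. The delicate step is to show that the transported conformal element agrees, for the purpose of the constraints, with the intrinsic sheaf-theoretic one of \cite{BML} on $\Coh(S)$ --- equivalently, that the intrinsic ($\Ext$-based) definition of $L_k$ coincides with its explicit descendent realization after the relabeling of generators and the requisite degree shifts. A secondary difficulty is controlling the tilt relating $\Coh(S)$ to $\rep(Q,\mR)$, so as to guarantee that the Gieseker-semistable sheaves of interest really do correspond to objects in a heart to which the quiver wall-crossing of part (I) legitimately applies.
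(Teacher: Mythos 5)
Your skeleton---Joyce's vertex algebra with a conformal element, propagation of constraints through wall-crossing, a base case where only the simples survive, and derived equivalences for the surfaces---is indeed the paper's strategy, but three of your steps have genuine gaps. The clearest one is part (II): ``specializing the universal constraint to the locus of stable framed objects'' is not an available move. There is no stability-independent class on $\mM_Q$ annihilated by the Virasoro operators that one could restrict; the constraints are a property of the specific class $[M^{\ov{n}}_{Q,\ov{d}}]$, and fineness or projectivity of the moduli scheme contributes nothing towards them (a fine projective moduli of stable objects does not automatically satisfy Virasoro constraints). The paper proves (II) by rewriting stable framed representations as $\mu^{\fr}$-stable representations of the framed quiver $\Qfr$ with one frozen vertex (Definition \ref{def:framedrep2}), running the frozen-vertex wall-crossing \eqref{eq:pairWC}---which needs the partial lift $\widehat{\ad}$, the rigidification correction $\delta_k$, and Proposition \ref{cor:PMimpliesadPM}---down to an increasing stability whose only nonvanishing classes are point-like (Theorem \ref{thm:main}), and then converting the $\infty$-normalized constraints into the framed ones by Lemma \ref{lem:framedVir}. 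Your proposal contains no substitute for this chain.

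In part (I), ``run the wall-crossing machinery'' conceals the actual work: Joyce's wall-crossing theorem is proved for slope stability, and to invoke it for a Bridgeland stability constructed on $\rep(Q)$ one must verify the assumptions of \cite[\S 5.1, \S 5.2]{Jo21}, above all projectivity of the moduli of semistable objects and of the auxiliary quiver-pair moduli; Proposition \ref{prop:bridgeland} does this by manufacturing a King character $\theta^{\ov{c}}_{\sigma}$ out of the central charge, and the paper states explicitly that this check occupies most of that proof (the case of a general finite-dimensional algebra $A$ then also needs the Morita-equivalence step you omit). In part (III) you correctly flag the two hard points but leave both unresolved, and they are exactly where the content lies. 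The conformal-element matching is dissolved rather than confronted: since the descendent operators are dual to the lattice vertex algebra operators (Theorem \ref{thm:twovirasoro} for quivers, and its sheaf analog in \cite{BML}), the constraints on either side are equivalent to membership in $\widecheck{P}_0$, and the derived equivalence---lifted to a dg quasi-equivalence so that it identifies the moduli stacks together with their $\Ext$-complexes---identifies the vertex algebras and hence $\widecheck{P}_0$ (Remark \ref{rem:derequiv}); no class-dependent comparison of Chern-character realizations is required. Moreover, Gieseker stability is not matched directly to a stability on a tilt of $\rep(Q,\mR)$: it is matched to the large-volume-limit stability $\sigma_{s_0,t_0}$ on the tilt $\mA_{s_0}$ of $\Coh(S)$ (with \cite{Wolf} required for $r=0$), after which one travels along Bertram's nested wall, keeping the semistable objects fixed, until the stability enters the quiver region $\ov{\Omega}_{\mA(k)}$ (\cite{ABCH}, \cite{EMiles}); finally, identifying the resulting classes with Joyce's invariant classes requires matching the pair constructions and the sets of destabilizing classes on both sides, which is Lemma \ref{lem:filing}. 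Without these ingredients the transport of constraints from quivers to $M^{p}_{r,d,n}$ does not go through.
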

The first of these results is an application of Proposition \ref{prop:bridgeland} which generalizes Theorem \ref{thm:main} from $\mu$-stabilities to Bridgeland stabilities. It is worth noting that the proof of wall-crossing for such Bridgeland stability conditions of a quiver requires a separate check that Joyce's wall-crossing assumptions from \cite[§5.1, §5.2]{Jo21} hold. This takes up the majority of the proof of Proposition \ref{prop:bridgeland}.
The result about quiver representations is then transformed into a statement about finite dimensional algebras through the appropriate Morita equivalences. 

Part (II) follows from Theorem \ref{thm:main} by applying the alternative description of framed quiver representations from Definition \ref{def:framedrep2}. In the case of the $A_{n}$ quiver, one obtains Virasoro constraints for partial flag varieties which can be formulated in terms of integrals of Chern characters of universal quotient bundles (see Example \ref{ex:flag}). 

Finally, part (III) is stated in Theorem \ref{thm:P2indep} and makes use of Proposition \ref{prop:bridgeland}. A more general version of all three statements holds because I prove them even in the presence of strictly semistable objects using Theorem \ref{thm:quivvirWC}. I will recall this general formulation and its geometric nature before stating the mentioned theorem. Recently, the authors of \cite{MW} proved (III) but without strictly semistables following my approach that originally only dealt with positive rank. 

\subsection{What are Virasoro constraints for quivers?}
\label{sec:whatare}
The traditional formulation of Virasoro constraints for a moduli scheme $M$ is in terms of \textit{descendents} and their integrals with respect to the (virtual) fundamental class $[M]$. This makes Virasoro constraints a property of $[M]$ rather than the moduli schemes itself as different choices of obstruction theories may lead to different sets of conditions. I ignored this point in the statement of Theorem \ref{thm:first} because the virtual fundamental classes are naturally determined by the data specified there.

In §\ref{sec:background}, I recall the basics behind working with quivers. For the purposes of the introduction, I work with a $Q$ that has no cycles (see the discussion in point (3) in §\ref{sec:future}) but will partially remove this restriction later. Additionally, fix a two-sided ideal of relations $\mR$ with a homogeneous set of generators $R$ as in Definition \ref{def:homorela}. In \cite{BML}, we distinguished between sheaves $G$ and pairs of sheaves $V\to F$ with the term $V$ fixed. This is covered in the present work by introducing frozen vertices denoted by $\diamond$. I always assume that no relations begin at a frozen vertex.

When studying representations of $Q$ recalled in §\ref{sec:background}, the vector spaces at each frozen vertex do not vary. This affects the moduli schemes of representations and their obstruction theory in the same way as it did for pairs of sheaves. Note that the Grothendieck group $K^0(Q,\mR)$ of $\Rep(Q,\mR)$ is isomorphic to $\ZZ^{V}$ where $V$ is the set of vertices of $Q$. The dimension vector $\ov{d}\in \ZZ^V$ is thus used to determine the type of a representation of $(Q,\mR)$ as it varies in its moduli scheme. For framed representations, one additionally needs to specify the framing vector $\ov{n}\in \ZZ^V$ which determines the dimension of the framing at each vertex as in Definition \ref{def:framedrep1}. I always assume that $\ov{d},\ov{n}\neq 0$. Additionally, I will take it as given that $\mR$ and $R$ are included and known, and I will not mention them from now on.

Suppose that $\sigma$ is a stability condition on $\rep(Q)$. When there are no strictly $\sigma$-semistables, the moduli scheme $M^{\sigma}_{Q,\ov{d}}$ of $\sigma$-stable representations with dimension vector $\ov{d}$ is often known to be projective thanks to the work of King \cite{King}. For a chosen $R$, there is a fixed perfect obstruction theory from Example \ref{ex:Tvir} (for a wrong choice of $R$, this may not be an interesting obstruction theory as explained in Remark \ref{rem:wrongobstruction}). The resulting (virtual) fundamental class 
$$
[M^{\sigma}_{Q,\ov{d}}]\in H_{\bullet}(M^{\sigma}_{Q,\ov{d}})
$$
can serve to define integration over $M^{\sigma}_{Q,\ov{d}}$ of cohomology classes. Denoting by $\mU_v$ the universal vector space at the vertex $v$ of $Q$, perhaps the most natural option is to integrate polynomials in the descendent classes $$\tau_i(v)=\ch_i(\mU_v)\,.$$
The descendent algebra $\TT^Q_{\ov{d}}$ generated freely by $\tau_i(v)$ for $i>0$ is a representation of the positive half-branch of the Virasoro algebra. More explicitly, there is a set of differential operators 
$$
\bL_k = \bR_k + \bT_k \quad \text{for}\quad k\geq -1
$$
acting on $\TT^Q_{\ov{d}}$ that satisfy the commutator relations 
$$
[\bL_n,\bL_m] = (m-n)\bL_{n+m} \,.
$$
The reason that no additional term of the form $(n-n^3)/12 \delta_{n+m,0}C$ as in Definition \ref{Def: conformal vertex algebra} appears, is because it would not contribute anyway as long as I work with $\bL_k, k\geq -1$. One would need to additionally determine the negative half-branch which is what I do in Theorem \ref{thm:twovirasoro}.

As can be seen from Definition \ref{def:virop}, the term $\bR_k$ is identical to the one from \cite[§2.3]{BML}. The connection between $\bT_k$ and the Chern character of the virtual tangent bundle $T^{\vir}$ was noticed in \cite[Example 2.6]{BML}. It was stated there and in \cite[Remark 4.13]{BML} that the similarity is a general phenomenon that can be used to determine $\bT_k$.  After computing $\ch(T^{\vir})$ in \eqref{eq:Tvir} this leads to the definition of $\bT_k$ in the second part of Definition \ref{def:virop}.

The formulation of Virasoro constraints in terms of special universal representations $\ov{\UU}$ called $v$-normalized for a vertex $v$ is still applicable, and it is stated in Definition \ref{def:Skvirasoro}. As before, one introduces an additional set of operators $\bS^{v}_k$ and requires that 
\begin{equation}
\label{eq:universalVir}
\int_{[M^{\sigma}_{Q,\ov{d}}]}\xi_{\ov{\UU}} \big( (\bL_k + \bS^v_k)(\tau)\big)  = 0 \quad \textnormal{for each}\quad k\geq 0, \tau\in \TT^Q_{\ov{d}} \,.
\end{equation}
Here  $\xi_{\ov{\UU}}$ denotes the realization of the a priori formal variables $\tau_i(v)$ as Chern characters of the universal vector space $\mU_v$ at the vertex $v$.

Alternatively, one can state Virasoro constraints in terms of the single weight zero operator $\bL_{\inv}$ from Definition \ref{def:weight0} as the vanishing
 $$
\int_{[M^{\sigma}_{Q,\ov{d}}]}\xi\big(\Linv(\tau)\big)=0\quad \textnormal{for any} \quad  \tau\in \TT^Q_{\ov{d}}
  $$
  which is independent of the choice of a universal representation $\ov{\UU}$ so it is omitted from the notation.
  
If one instead works with the framed representations of Definition \ref{def:framedrep1} and their moduli spaces $M^{\ov{n}}_{Q,\ov{d}}$ for fixed $\ov{n}$, $\ov{d}$, then the Virasoro constraints need to be modified. The new operators are
$$
\bL^{\ov{n}}_k = \bR_k+ \bT^{\ov{n}}_k
$$
where $\bR_k$ remains the same, but the $\bT$-operator is altered by subtracting $k!\tau_k(\ov{n})$. For the framed Virasoro constraints, one replaces the operators $\bL_k$ appearing in \eqref{eq:universalVir} by the new ones. This is similar to the modification in \cite[§2.6]{BML} that gave pair Virasoro constraints. This time, however, it is less ad-hoc as explained in the next subsection. There I explain how to extract $\bT^{\ov{n}}_k$ from $\bT_k$, and I show that the resulting constraints are subsumed by the same general theory.
  \subsection{Unifying pair and sheaf Virasoro constraints}
  \label{sec:pairissheaf}
  Here I describe an important observation that however requires some knowledge from \cite{BML} to be fully appreciated. Without prior familiarity with the topic, I recommended skipping it for now, and just keeping in mind the slogan ``pair Virasoro constraints $\subset$ sheaf Virasoro constraints".

To obtain the correct operator $\bT_k$ when there are no frozen vertices, one starts from the virtual tangent bundle of the stack of representations $\mathfrak{M}_{Q,\ov{d}}$ from \eqref{eq:MAGL} rather than its rigidification or $M^{\sigma}_{Q,\ov{d}}$. What changes when frozen vertices are included is discussed in detail below.
  A notable difference when compared to \cite{BML} is the lack of a special type of Virasoro constraints when frozen vertices are present. In \cite[§2.6]{BML}, they were called pair Virasoro constraints, and their absence is discussed in detail in Remark \ref{rem:nopairVir}, Remark \ref{rem:frozenisrigid}, Remark \ref{rem:rigidvsnonrigid}, and Remark \ref{rem:Lierigid}. Stated briefly, it comes down to the equivalences
  \begin{equation}
  \label{eq:frozenrigid}
  \begin{tikzcd}[column sep = huge]
      \boxed{\begin{array}{c}\geq 1 \ \textnormal{frozen}\\
\textnormal{vertices}\end{array}}\arrow[r, Leftrightarrow,"\textnormal{Def. \ref{def:onefrozen}}"]& \boxed{\begin{array}{c}1 \ \textnormal{frozen}\\
    \textnormal{vertex}\end{array}}\arrow[r, Leftrightarrow, "\textnormal{Rem. \ref{rem:nopairVir}}"]& \boxed{\begin{array}{c}
    \textnormal{rigidifying}\\
    \textnormal{the stack}\\
    \textnormal{of representations}\\
    \textnormal{of} \ Q
     \end{array}}
  \end{tikzcd}\,.
  \end{equation}
The diagram states that adding frozen vertices rigidifies the moduli stack of quiver representations, and therefore this leads to adding a copy of $\mO$ to $T^{\vir}$. Through the correspondence with Virasoro constraints, this extra copy was hidden in \cite[Conjecture 2.18]{BML}. It was already pointed out that one should start from the unrigidified stack instead as this is compatible with Joyce's vertex algebra from \cite{Jo17} that offers an intuitive explanation for the presence of Virasoro constraints. This modification is explained in Remark \ref{rem:nopairVir}, and it corrects the pair Virasoro constraints from \cite[§2.6]{BML} allowing me to unify the two cases.

Next, I will use this point of view to derive framed Virasoro constraints directly. Starting from a quiver $Q$, I will construct a new quiver $\ovninfty Q$ like in \eqref{eq:framedflag} with an extra vertex $\stackrel{\infty}{\diamond}$ and the number of edges going out of it to all the original vertices of $Q$ determined by $\ov{n}$.  It plays multiple roles throughout this article, and in each one of them, only the dimension vectors of the form $(d_{\infty},\ov{d}), d_{\infty} = 0,1$ appear. By \cite[Proposition 3.3]{reineke}, its $\mu^{\fr}$-stable representations from Definition \ref{def:framedrep2} are precisely the framed representations with the framing vector $\ov{n}$. Using this perspective, I explain in Lemma \ref{lem:framedVir} how framed Virasoro constraints can be derived from the universal ones in \eqref{eq:universalVir}. Using the above correction for frozen vertices, we see explicitly the cancellation of $\bS^\infty_k$ by $\delta_k$ that undoes the rigidification. This needed to be done superficially in \cite{BML}.
\subsection{The vertex algebra approach and general results}
\label{sec:how}
The conclusion that a virtual fundamental class satisfies Virasoro constraints if and only if it is a physical state in the vertex algebra of Joyce is the core new idea of \cite{BML}. This together with Joyce's wall-crossing \cite{Jo21} which preserves physical states is what led to the proofs of multiple new cases of Virasoro constraints. I follow a similar path of arguments here with some deviations when it comes to constructing explicit conformal elements in §\ref{sec:VAframing}. Not separating between sheaf and pair Virasoro also uniformizes multiple arguments in §\ref{sec:WC}. I now discuss the main aspects of the proof and its conclusions but leave out some of the steps that were already presented in length in \cite[§1.4 - §1.6]{BML}. In the process, the quiver $Q$ is assumed to have no frozen vertices as they are added by hand later.

Let $\MQ$ be the (higher) stack of all perfect complexes of representations of $Q$, then by \cite{Jo17} its homology 
$$
V_{Q,\bullet}  = \widehat{H}_{\bullet}(\mM_Q)
$$
is a vertex algebra determined by the data in Definition \ref{def: VOAconstruction} (the reader should ignore the extra dashes appearing in this definition for now). Using the Euler form $\chi: K^0(Q)\times K^0(Q)\to \ZZ$ from Definition \ref{def:eulerform}, the additional $\widehat{(-)}$ denotes a shift of the homological degree by $2\chi(\ov{d},\ov{d})$ on each connected component $\mM_{\ov{d}}$ of objects with class $\ov{d}\in K^0(Q)$.  I will often neglect the subscript $Q$ when the quiver is understood. By the results in \cite{Jo17} recalled in Theorem \ref{thm:geometricvaconstruction}, one knows that $V_{\bullet}$ is the lattice vertex algebra (see Example \ref{ex:latticeVA}) for the lattice $(\Lambda,\chi_{\sym})$ where I used $\Lambda=K^0(Q)$, and $\chi_{\sym}$ is the symmetrization of $\chi$. When Assumption \ref{ass:homotopyref} holds, this follows after establishing in \eqref{eq:xidagger} the isomorphism
$$
\xi^{\dagger}:\widehat{H}_{\bullet}(\mM_{\ov{d}}) \stackrel{\sim}{\longrightarrow} e^{\ov{d}}\otimes \TT_{\Lambda}
$$
where $\TT^Q_{\ov{d}}$ is the dual algebra of $\TT_{\Lambda}$ in the sense of §\ref{sec:gradedalg}. This assumption can be removed while forgetting $\mR$ as explained in Remark \ref{rem:forgetR}.
The translation operator $T: V_{\bullet}\to V_{\bullet + 2}$ can then be interpreted as the dual of $\bR_{-1}$, and one defines the quotient 
 $\widecheck{V}_{\bullet} = V_{\bullet + 2}/T(V_{\bullet})$.

Recall that one of the ingredients in constructing the vertex algebra structure is the $\Ext$-complex from \eqref{eq:extcomplex} on $\mM_Q\times \MQ$ that restricts along the diagonal to $T^{\vir}[-1]$. In this subsection, I will avoid mentioning the issues related to §\ref{sec:pairissheaf} and the need to distinguish between rigidified and non-rigidified vertex algebras. This discussion will be postponed to §\ref{sec:rigidvsnonrigid} which forces me to assume for now that the symmetrized Euler pairing $\ovninfty \chi_{\sym}$ on the lattice $\ovninfty \Lambda = K^0(\ovninfty Q)$ of the quiver $\Qfr$ is non-degenerate. Even in simple examples, this assumption can be violated, so I use it purely to simplify the exposition. The vertex algebra for $\Qfr$ will be denoted by $\ovninfty V_{\bullet}$.

In §\ref{sec:conformalel}, I recall the definition of a conformal element $\omega$ in a vertex algebra and that for a non-degenerate lattice, its lattice vertex algebra contains a natural choice of $\omega$. This then leads to the conformal element $\ovninfty \omega\in\ovninfty V_{\bullet}$ which gives rise to the above Virasoro constraints for (virtual) fundamental classes $[M^{\sigma}_{Q,\ov{d}}]$. We explained the sheaf version of this already in \cite[§1.4, §1.5]{BML} so I recommend consulting this reference for more details, while I give a brief summary here.

There is an open embedding $M^{\sigma}_{Q,\ov{d}}\hookrightarrow \mM^{\rig}_{\ov{d}}$ where $(-)^{\rig}$ denotes the rigidification of a stack. Pushing forward the (virtual) fundamental class defines
\begin{equation}
\label{eq:virpush}
\ov{[M^{\sigma}_{Q,\ov{d}}]}\in \widecheck{H}_{0}(\mM^{\rig}_{\ov{d}}) \subset \widecheck{V}_0
\end{equation}
when using the notation \eqref{eq:VcheckHcheck}. The quotient $\widecheck{V}_0$ carries naturally the structure of a Lie algebra and the wall-crossing of the classes $\ov{[M^{\sigma}_{Q,\ov{d}}]}$ relating them for different values of $\sigma$ is expressed in terms of it (see Theorem \ref{thm:WCformulae}). The partial lift $\widehat{\ad}$ of the Lie bracket $[-,-]$ that is heavily used in \cite{BML} and recalled in \eqref{eq:hatad} is an interesting observation of its own, but after removing the separation between pair and sheaf Virasoro constraints, there is no longer any need for it when proving Virasoro constraints. It does, however, motivate the definition of $\bL_{\inv}$. This is why I will not make use of it in the introduction, but it appears in the main body of the paper. 

 As recalled in Definition \ref{Def: conformal vertex algebra}, one can extract from a conformal vector $\omega$ a representation of a Virasoro algebra on the vertex algebra. For any quiver $Q$ with non-degenerate pairing $\chi_{\sym}$, the conformal charge of the Virasoro algebra is given by $|V|$.  In the case of $\ovninfty Q$, I will label the corresponding Virasoro operators by $\ovninfty L_k$. Denote by $P_i$ the subspaces of physical states in $\ovninfty V_{\bullet}$ of conformal weight $i\in \ZZ$. Their elements $a$ satisfy
$$
\ovninfty L_k(a)= i\delta_k a \quad \text{for}\quad k\geq 0\,.
$$
Suppose that $a$ also satisfies either of the following two cases:
\begin{enumerate}
    \item $d_{\infty}=0$ and $$a\in e^{\ov{d}}\otimes \TT_{\Lambda}\subset e^{(0,\ov{d})}\otimes \TT_{\ovninfty \Lambda}\,,$$
    \item $\tau_1(\infty)\cap a = 0$ for $a\in e^{(1,\ov{d})}\otimes \TT_{\ovninfty \Lambda}$.
\end{enumerate}
  Then adding the symbol $(-)^{\dagger}$ to denote the dual with respect to the pairing between the dual algebras
\begin{equation}
\label{eq:onvinfpairing}
\langle -,-\rangle: \TT^{\Qfr}_{(d_{\infty},\ov{d})}\times e^{(d_{\infty},\ov{d})}\otimes \TT_{\ovninfty \Lambda}\longrightarrow \CC\,,\end{equation}
  it follows from Theorem \ref{thm:twovirasoro} and Lemma \ref{lem:dashnodash} that $$\ovninfty L_k(a) = (\ovninfty \bL_k)^{\dagger}(a)\quad \text{for}\quad k\geq -1\,.$$
 This is used to conclude for $a\in \ovninfty V_{\bullet}$ satisfying either one of the above two assumptions that 
$$
a\in P_1 \quad \iff \quad \int_{a}\bL_{\inv}(\tau)=0\quad \text{for} \quad  \tau\in \TT^{\Qfr}_{(d_{\infty},\ov{d})}
$$
where the integral is a more suggestive notation for the pairing \eqref{eq:onvinfpairing}. 

I will denote the projection of $a$ to the quotient $\widecheck{P}_0 = P_1/T(P_0)$ by $\ov{a}$. Most importantly, $\widecheck{P}_0$ is a strict Lie subalgebra of $\widecheck{V}_0$ so Virasoro constraints restrict the domain where (virtual) fundamental classes can wall-cross. The class $\ov{a}$ is in general replaced by 
$$
\ov{[M^{\sigma}_{(d_{\infty},\ov{d})}]}\in \ovninfty \widecheck{V}_{0} \quad \text{for} \quad (d_{\infty},\ov{d})\in K^0(\Qfr), \ d_{\infty}=0,1
$$
which are the invariant classes defined in \cite[Theorem 5.8]{GJT} and \cite[§6.4, §9.1]{Jo21} generalizing the (virtual) fundamental classes. When $d_{\infty}=0$, one introduces the moduli space $M^{\sigma_P}_{(1,\ov{d})}$ where $\sigma_P$ is a variant of Joyce--Song stability  constructed out of $\sigma$ for $\ovninfty Q$. There are no strictly $\sigma_P$-semistable representations, so one may consider $\Big[M^{\sigma_P}_{(1,\ov{d})}\Big]$ and push it forward along the natural map to the stack $\Pi: M^{\sigma_P}_{(1,\ov{d})}\to \mM^\sigma_{\ov{d}} \subset \mM_Q$ after caping with $c_{\rk}(T_{\Pi})$\footnote{I used the standard notation where $c_{\rk}(-)$ denotes the top Chern class and $T_{\Pi}$ is the relative tanget bundle.}. One then corrects the resulting homology class by subtracting contributions of semistable strict subrepresentations of objects in $\mM^\sigma_{\ov{d}}$.

In Proposition \ref{prop:bridgeland}, I extend Joyce's wall-crossing theorem recalled in Theorem \ref{thm:WCformulae} to Bridgeland stability. It relates the above classes for different $\sigma$ in terms of the Lie bracket on $\ovninfty\widecheck{V}_0$ and allows me to prove the main general theorem.
\begin{theorem}[Proposition \ref{cor:PMimpliesadPM}, Proposition \ref{prop:bridgeland}]
\label{thm:quivvirWC}
    Virasoro constraints are preserved under wall-crossing. In other words, if $\ov{[M^{\sigma_1}_{(c_{\infty},\ov{c})}]}$ satisfy Virasoro constraints on the right-hand side of \eqref{eq:quiverWC} or \eqref{eq:pairWC}, then so do $\ov{[M^{\sigma_2}_{(d_{\infty},\ov{d})}]}$ on the left whenever $d_{\infty}=0,1$. This holds for any pair of Bridgeland stability conditions $\sigma_1,\sigma_2$ constructed on the heart $\rep(Q)$.
\end{theorem}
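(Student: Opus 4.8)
The plan is to reduce the statement to the two ingredients packaged in the cited propositions: a \emph{wall-crossing formula} valid for Bridgeland stability conditions, and the fact that the \emph{Virasoro-constrained classes form a Lie subalgebra}. Once both are in hand, the conclusion is a purely formal induction, so I will set up the two ingredients first and combine them at the end.

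First I would fix the wall-crossing formula in the Bridgeland setting. Joyce's wall-crossing theorem (Theorem \ref{thm:WCformulae}) expresses the invariant class $\ov{[M^{\sigma_2}_{(d_\infty,\ov{d})}]}$ at one stability condition as a finite sum of iterated Lie brackets, taken in $\ovninfty\widecheck{V}_0$, of the invariant classes $\ov{[M^{\sigma_1}_{(c_\infty,\ov{c})}]}$ at another stability condition, precisely as displayed in \eqref{eq:quiverWC} and \eqref{eq:pairWC}. The role of Proposition \ref{prop:bridgeland} is to guarantee that this formula persists when $\sigma_1,\sigma_2$ are Bridgeland stability conditions constructed on the heart $\rep(Q)$. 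As the surrounding text indicates, the substance of this step is the verification that Joyce's structural hypotheses from \cite[§5.1, §5.2]{Jo21} hold for such hearts; this does not come for free from the abelian-category wall-crossing already available for $\mu$-stability, and it is where essentially all of the work resides.

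Second I would invoke the Lie-algebraic heart of the argument, Proposition \ref{cor:PMimpliesadPM}. Under the identification of Virasoro constraints with membership in the space $P_1$ of physical states of conformal weight $1$, the key point is that the projection $\widecheck{P}_0 = P_1/T(P_0)$ is a \emph{strict} Lie subalgebra of $\widecheck{V}_0$, so that the bracket of two Virasoro-constrained classes is again Virasoro-constrained. Here one must check that the relevant classes satisfy one of the two conditions, namely $(1)$ $d_\infty = 0$ with $a \in e^{\ov{d}}\otimes\TT_\Lambda$, or $(2)$ $\tau_1(\infty)\cap a = 0$ for $a \in e^{(1,\ov{d})}\otimes\TT_{\ovninfty\Lambda}$; this is exactly where the restriction $d_\infty = 0,1$ enters, since these conditions are what force $\ovninfty\bL_k$ to act as $\ovninfty L_k$ and hence make the physical-state condition coincide with the Virasoro constraints.

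Combining the two, the proof concludes by induction on the length of the bracket expansion. Assuming each $\ov{[M^{\sigma_1}_{(c_\infty,\ov{c})}]}$ lies in $\widecheck{P}_0$, every iterated bracket on the right-hand side of \eqref{eq:quiverWC} or \eqref{eq:pairWC} again lies in $\widecheck{P}_0$ by the Lie subalgebra property, whence their finite sum $\ov{[M^{\sigma_2}_{(d_\infty,\ov{d})}]}$ does too; that is, the wall-crossed class satisfies Virasoro constraints. I expect the main obstacle to be Proposition \ref{prop:bridgeland} rather than this final combination: the formal step is immediate once the wall-crossing formula is available, but establishing that formula for Bridgeland stabilities requires a careful, case-by-case check of Joyce's boundedness and local-finiteness assumptions for hearts of the form $\rep(Q)$.
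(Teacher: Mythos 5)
Your proposal is correct and follows essentially the same route as the paper: Theorem \ref{thm:quivvirWC} is proved there precisely by combining Proposition \ref{prop:bridgeland} (the wall-crossing formulae \eqref{eq:quiverWC}, \eqref{eq:pairWC} extended to Bridgeland stabilities on $\rep(Q)$, whose substance is indeed the verification of Joyce's assumptions via King's projectivity results) with Proposition \ref{cor:PMimpliesadPM} (preservation of the weight-zero Virasoro constraints under the bracket), followed by induction on the length of the iterated brackets. The only cosmetic difference is that for \eqref{eq:pairWC} the paper runs the induction through the partial lift $\widehat{\ad}$ together with the normalization $\tau_1(\infty)\cap a=0$ (parts (ii)--(iv) of Proposition \ref{cor:PMimpliesadPM}) rather than purely through the Lie subalgebra $\widecheck{P}_0$, but that is exactly the content you invoke.
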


By selecting $\sigma_1$ to be the increasing $\mu$-stability condition of \cite[Definition 5.5, Proposition 5.6]{GJT}, I reduce the proof of Virasoro constraints to point-like moduli spaces where they trivially hold. This approach is reminiscent of the proof of \cite[Theorem A (1)]{BML}, where we deduced Virasoro constraints for vector bundles on curves by wall-crossing into 0-dimensional sheaves. Thus, I proved
\begin{corollary}[Theorem \ref{thm:main}, Proposition \ref{prop:bridgeland}]
    The classes $\ov{[M^{\sigma}_{(d_{\infty},\ov{d})}]}^{\inva}$ satisfy Virasoro constraints whenever $d_{\infty} = 0,1$ and $\sigma$ is a Bridgeland stability condition on $\Rep(Q)$.
\end{corollary}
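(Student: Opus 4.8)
The plan is to obtain Virasoro constraints for all Bridgeland-stable classes by wall-crossing from a tractable initial chamber where the constraints are evident, using Theorem \ref{thm:quivvirWC} as the engine. The statement asserts that $\ov{[M^{\sigma}_{(d_{\infty},\ov{d})}]}^{\inva}$ satisfies Virasoro constraints for $d_{\infty}=0,1$ and any Bridgeland $\sigma$. Since Theorem \ref{thm:quivvirWC} guarantees that Virasoro constraints are \emph{preserved} under wall-crossing between any two Bridgeland stability conditions on $\rep(Q)$, it suffices to produce \emph{one} stability condition $\sigma_1$ for which the constraints hold, and then wall-cross to an arbitrary target $\sigma=\sigma_2$. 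This reduces the problem to a base case.

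First I would choose $\sigma_1$ to be the increasing $\mu$-stability condition of \cite[Definition 5.5, Proposition 5.6]{GJT}, whose effect (as indicated in the paragraph following Theorem \ref{thm:quivvirWC}) is to force the stable objects to be point-like: the moduli spaces $M^{\sigma_1}_{\ov{d}}$ in this chamber are supported on single points (equivalently, the invariant classes localize onto $0$-dimensional or otherwise trivial moduli). At such a point-like moduli space, the descendent integrals defining the constraints degenerate, and one checks directly that the operator $\bL_{\inv}$ annihilates the relevant class. Concretely, I would verify that in this extreme chamber the class $\ov{[M^{\sigma_1}_{(d_{\infty},\ov{d})}]}^{\inva}$ lands in $\widecheck{P}_0=P_1/T(P_0)$; since membership in $P_1$ is exactly equivalent (via the paragraph preceding Theorem \ref{thm:quivvirWC}, i.e.\ the identification $\ovninfty L_k(a)=(\ovninfty\bL_k)^{\dagger}(a)$ and the physical-state criterion) to the vanishing $\int_a \bL_{\inv}(\tau)=0$, this is precisely the desired base case. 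The two sub-cases $d_{\infty}=0$ and $d_{\infty}=1$ must be treated in parallel, matching the two alternatives in the physical-state criterion, but for point-like classes both are routine.

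With the base case in hand, I would invoke Theorem \ref{thm:quivvirWC} directly: applying it with $\sigma_1$ the increasing $\mu$-stability and $\sigma_2=\sigma$ the arbitrary target Bridgeland condition, the preservation statement propagates the constraints from the point-like chamber to $\sigma$. The key structural fact underwriting this is that $\widecheck{P}_0$ is a \emph{strict Lie subalgebra} of $\widecheck{V}_0$ (noted in §\ref{sec:how}): because the wall-crossing formula \eqref{eq:quiverWC}/\eqref{eq:pairWC} expresses $\ov{[M^{\sigma_2}]}$ as an iterated Lie bracket of terms $\ov{[M^{\sigma_1}]}$, and brackets of physical states remain physical states, the constraint is closed under the wall-crossing operation. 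Thus every target class inherits the vanishing.

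The main obstacle is \emph{not} the wall-crossing step itself—once Theorem \ref{thm:quivvirWC} is available it is essentially formal—but rather ensuring the hypotheses of that theorem genuinely cover all Bridgeland $\sigma$ on $\rep(Q)$. As flagged in the discussion after Theorem \ref{thm:first}, establishing wall-crossing for Bridgeland (as opposed to purely $\mu$-) stabilities requires verifying that Joyce's wall-crossing assumptions from \cite[§5.1, §5.2]{Jo21} hold for the quiver stability conditions built via \cite[Proposition 5.3]{bridgeland}; this verification (carried out inside Proposition \ref{prop:bridgeland}) is the technically demanding part, since one must check boundedness, the existence of a suitable dominance/permissibility structure, and that the increasing $\mu$-stability can be connected to $\sigma$ through admissible walls. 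A secondary subtlety is the non-degeneracy of $\ovninfty\chi_{\sym}$, which §\ref{sec:how} assumes to simplify the exposition but which can fail; the honest argument must route through §\ref{sec:rigidvsnonrigid} and the rigidified vertex algebra, so I would be careful to state the base-case localization and the physical-state criterion in a form that survives the degenerate-pairing case rather than relying on the simplifying assumption.
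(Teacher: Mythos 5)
Your proposal is correct and follows essentially the same route as the paper: wall-cross from the increasing $\mu$-stability of \cite[Definition 5.5, Proposition 5.6]{GJT}, where the only nonzero classes are the point-like ones $e^{(1,0)}\otimes 1$ and $e^{v}\otimes 1$ that trivially lie in $\widecheck{P}_0$, and then propagate via the wall-crossing formulae \eqref{eq:quiverWC}/\eqref{eq:pairWC} using closure of physical states under the (partially lifted) Lie bracket, with Proposition \ref{prop:bridgeland} supplying the check of Joyce's assumptions for Bridgeland stabilities. The technical caveats you flag (verifying \cite[§5.1, §5.2]{Jo21} via King's projectivity argument, and routing around possible degeneracy of $\ovninfty\chi_{\sym}$ through the rigidified/non-degenerate vertex algebras) are exactly the points the paper addresses.
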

Parts (I) and (II) of Theorem \ref{thm:first} are immediate consequences of the corollary, while part (III) requires some extra work that I elucidate in the §\ref{sec:introsheaves}. Interestingly, once Virasoro constraints for the classes $[M^{\sigma}_{Q,\ov{d}}]$ are recast in terms of the vertex algebra $V_{Q,\bullet}$ as above, one can state them in an almost identical form as one does for GW Virasoro constraints. I present this in Remark \ref{rem:relatingtoGW}.
\subsection{Vertex algebras with non-degenerate pairings}
\label{sec:rigidvsnonrigid}
Here I will explain how to remedy the situation when $\ovninfty\chi_{\sym}$ is singular. As I apply the perspective outlined in \ref{sec:pairissheaf}, it is advisable to get acquainted with \cite{BML} or other parts of the present work before delving deeper into this subsection.

I will focus on quivers constructed out of $Q$ by adding frozen vertices, so it will be important to distinguish between their rigidified and the non-rigidified $T^\vir$ and the corresponding $\Ext$ complexes. This is done in Definition \ref{def: VOAconstruction} by adding the extra $(-)'$ when in the rigidified situation. One particular example that plays a role in §\ref{sec:VAframing} is the quiver $Q^{\fr}$ which adds a frozen vertex $(v)$ and additional edge for each vertex $v$ of $Q$ 
$$
\begin{tikzcd}
    \overset{(v)}{\diamond}\arrow[r]&\overset{v}{\bullet}
\end{tikzcd}\,.
$$
The symmetrization $\chi^{\fr}_{\sym}$ of its Euler form is non-degenerate as shown in Lemma \ref{lem:Vfrnondegenerate}. Moreover, there exist maps transforming representations of one quiver to another along the following squiggly arrows
$$
\begin{tikzcd}
Q\arrow[r,rightsquigarrow]&\Qfr\arrow[r,rightsquigarrow]&Q^{\fr}
\end{tikzcd}\,.
$$
This can be translated to the homology of the associated stacks to induce inclusions of vertex algebras. A priori, there are two ways of doing this: 1) following the approach with pair vertex algebras from \cite[Definition 4.4]{BML}, 2) adding the rigidification correction from §\ref{sec:pairissheaf}. I represent both in the following diagram where I use $\ovninfty V_{\bullet}$ to denote the vertex algebra associated to $\Qfr$, $\ovninfty V'_{\bullet}$ its rigidified version, and $V^{\fr}_{\bullet}$ the rigidified vertex algebra of $Q^{\fr}$.
$$
\begin{tikzcd}[column sep = 0ex]
    &\arrow[dl, hook']V_{Q,\bullet}\arrow[dr, hookrightarrow]&\\
    \ovninfty V_{\bullet}\arrow[d, hook']&&\ovninfty V'_{\bullet}\arrow[d, hookrightarrow]\\
  \qquad\quad V^{\nd}_{\bullet}\ni \omega^{\nd}  &&\omega^{\fr}\in V^{\fr}_{\bullet}\qquad
\end{tikzcd}
$$
The vertex algebra $V^{\nd}_{\bullet}$ is an abstract lattice vertex algebra associated with a non-degenerate lattice $(\Lambda^{\nd}, \chi^{\nd}_{\sym})$ containing $(\ovninfty \Lambda,\ovninfty \chi_{\sym})$. We thus have two different conformal elements $\omega^{\nd}$ and $\omega^{\fr}$.

The purpose of keeping both points of view is to establish a unified approach to ``sheaf" and ``pair" Virasoro constraints through $\omega^{\nd}$ while preserving the existence of an explicit geometric conformal element $\omega^{\fr}$ (see Remark \ref{rem:Lierigid} for a more thorough account of this issue). The lie brackets on $\ovninfty \widecheck{V}_{\bullet}$ of the form used in Theorem \ref{thm:WCformulae} are not affected by including the dash $(-)'$ so both $(-)^{\nd}$ and $(-)^{\fr}$ perspectives can be used.

Both $\omega^{\nd}$ and $\omega^{\fr}$ lead to representations of Virasoro algebras
\begin{align*}
L^{\nd}_k:& V^{\nd}_{\bullet}\to V^{\nd}_{\bullet-k}\,,\qquad 
L^{\fr}_k: V^{\fr}_{\bullet}\to V^{\fr}_{\bullet-k}\,,
\end{align*}
respectively. The restriction of $L^{\nd}_k$ to $\ovninfty V_{\bullet}$ determines a unique set of operators $\ovninfty L_k$ that were previously obtained assuming non-degeneracy of $\ovninfty \chi_{\sym}$. The restriction of $L^{\fr}_k$ is denoted by $\ovninfty L'_k$ to keep in mind that they came from a rigidified vertex algebra. When working with $\ovninfty L'_k$, I mark the outcome of all previous constructions by an extra dash $(-)'$.

 I can now explain how the interpretation described in \cite[§4.4]{BML} can be recovered from the present unified approach if one instead works with $V^{\fr}_{\bullet}$ and $\ovninfty L'_k$. Lemma \ref{lem:dashnodash} states that 
$$
a\in P_1\quad \iff \quad a\in P'_{1-d_{\infty}}
$$
whenever $a$ satisfies one of the assumptions in §\ref{sec:how}. This degree shift is the reason why we needed to distinguish between pair and sheaf Virasoro constraints previously, and it forced us to use the partial lift $\widehat{\text{ad}}$ in \cite[(56)]{BML}.
\subsection{Semistable sheaves on surfaces}
\label{sec:introsheaves}
Virasoro constraints for moduli of sheaves were formulated in \cite[Definition 2.11, Conjecture 2.12, Definition 2.13, Conjecture 2.15]{BML}. They are very similar in nature to the Virasoro constraints for quivers because both of them are determined by the vertex algebra that interacts with $T^{\vir}$. In particular, the two can be identified under potential derived equivalences.  This point is explained more rigorously in Remark \ref{rem:derequiv}. It is then used in §\ref{sec:P2} to prove Virasoro constraints for semistable sheaves on $S =\PP^2,\PP^1\times \PP^1$ (and $S=\Bl_{\pt}\PP^2$). More specifically, letting $r$ be the rank, $d$ the first Chern class, and $n$ the second Chern character of a sheaf on $S$, I study the projective moduli spaces $M = M^{p}_{r,d,n}$ of Gieseker stable sheaves $F$ with $\ch(F) = (r,d,n)$ or more generally the enumerative invariants $[M^{p}_{r,d,n}]^\inva$ that are defined by Joyce in \cite[§7.7, §9.1]{Jo21} in a similar way as $[M^\sigma_{\ov{d}}]$ were -- I reviewed this before the statement of Theorem \ref{thm:quivvirWC}. 

Recall that if $\mF$ is the universal sheaf on $S \times M$, then the virtual tangent bundle on $M$ is given by 
$$
T^{\vir} = \tau^{\geq 1}\big(\text{R}\uHom_{M}(\mF,\mF)\big)[1]\,.
$$
It leads to a virtual fundamental class $[M]^{\vir}$ that can be pushed forward to $\widecheck{V}_{S,\bullet}$ from \cite[Lemma 4.10]{BML} which replaces $\widecheck{V}_{Q,\bullet}$. When $(r,d,n) = (1,0,-n)$, one recovers the Hilbert scheme $M=\Hilb^n(S)$ and its fundamental class. Even in the two simple cases $S=\PP^2$ and $S=\PP^1\times \PP^1$, the only known proof of Virasoro constraints for $[\Hilb^n(S)]$ was given in \cite[Theorem 20]{moop}, and it is based on a long sequence of arguments going back to Kontsevich as explained at the beginning of the introduction. Because of how natural sheaf-theoretic Virasoro constraints are, one expected that there had to be an independent way of proving them for surfaces. I achieve this goal here by addressing these two basic surfaces which are simultaneously the main stepping stones towards a general statement for any $S$ due to the universality of Virasoro constraints proved in \cite{Jo23} extending the results of \cite[§3]{moreira}.

The main idea is to use the strong full exceptional collections $\mE(k)$ of $\PP^2$ and $\mE(i,j)$ of $\PP^1\times\PP^1$ appearing in Example \ref{ex:excP2} and inducing derived equivalences
$$
 \Gamma(k): D^b(\PP^2)\stackrel{\sim}{\longrightarrow}D^b(P_2)\,,\qquad  \Gamma(i,j): D^b(\PP^1\times\PP^1)\stackrel{\sim}{\longrightarrow}D^b(P_1\times P_1)
$$ for the quivers $P^2,P_1\times P_1$ illustrated there. Now that I proved Virasoro constraints for Bridgeland semistable objects in the hearts $\rep(Q)$, I only need to understand how $\mM^p_{r,d,n}$ translate under the derived equivalences. 

This question was originally studied in \cite[§5--§7]{ABCH} in the case of Mumford stability for torsion-free sheaves without strictly semistables, but because it was based on the large volume limit of stability conditions introduced by Bridgeland in \cite{bridgelandK3}, it applies directly to Gieseker semistable sheaves. The argument for $\PP^1\times\PP^1$ and $\Bl_{\pt}(\PP^2)$ is the same and was carried out in \cite{EMiles}. I briefly summarize the argument:
\begin{enumerate}
    \item By \cite[§6]{Macrilecture} there is a Bridgeland stability condition $\sigma_{s_0,t_0}$ for $t_0\gg 0$ that is equivalent to twisted Gieseker stability when $r>0$. This argument works also for $r=0$ thanks to \cite[Lemma 7.3]{Wolf}.
    \item Using Bertram's nested wall theorem \cite{ABCH, Macioca13}, one can vary $(s,t)$ along a unique wall $W$ containing $(s_0,t_0)$ such that the stacks $\mM^{s,t}_{r,d,n}$ of $\sigma_{s,t}$-semistable objects remain unchanged.
    \item For sufficiently small $t$, the stability condition $\sigma_{s,t}$ can be constructed on hearts generated by full strong exceptional collections of $\PP^2$ by \cite[§7]{ABCH}. For $\PP^1\times \PP^1$ (and $\Bl_{\pt}\PP^2)$, this statement appears in \cite[§4.2, §5.3 and §5.4]{EMiles}.
    \item Using Proposition \ref{prop:bridgeland}, I show that 
    $$
    [M^p_{r,d,n}]^{\inva} = [M^{\sigma}_{\ov{d}}]\in \widecheck{P}_0
    $$
for some appropriate dimension vector $\ov{d}$ and Bridgeland stability $\sigma$ on the heart of quiver representations. 

Including strictly semistables proves to be rather challenging, because I need to do more than just compare moduli stacks of semistable objects for a fixed topological data. In Lemma \ref{lem:filing}, I am  in fact required to identify the stacks for all potential subobject. This is necessary to prove that the procedure of \cite[§9.1]{Jo21} for defining invariants is independet of whether one is working with quivers of with surfaces. 
\end{enumerate}
 This is how Theorem \ref{thm:P2indep} is proved after recalling that being an element of $\widecheck{P}_0$ is equivalent to Virasoro constraints being satisfied. In particular, this gives a self-contained new proof of \cite[Conjecture 1.4]{bree} in the case of $\PP^2$, $\PP^1\times\PP^1$, and $\Bl_{\pt}\PP^2$ covering also strictly semistable sheaves just as in \cite{BML}. The result for 1-dimensional semistable sheaves on the above three surfaces is strictly speaking new because in \cite[Theorem A (3)]{BML} the claim for any $S$ depends on an additional assumption. This assumption will be addressed in my future work. Following the appearance of an older version of this work, the authors of \cite{MW} used a simpler form of my approach to recover the above results in the stable case only. 
 
\subsection{Future directions}
\label{sec:future}
There are some interesting questions that I did not have time to address here, and that are worth investigating. I order them by increasing difficulty starting with what I expect to be the most straightforward one. 

\begin{enumerate}[wide=0pt]
    \item Recently, Bu studied in \cite{Bu23} wall-crossing for homological invariants in self-dual categories. If $Q$ is a self-dual quiver in the sense of \cite[Definition 7.1]{Bu23}, then one can construct a category of its self-dual representations. His work then goes on to construct fundamental classes of stable self-dual representations for self-dual stability conditions. They are then extended to invariant classes analogous to the ones in \cite{GJT} that satisfy wall-crossing formulae \cite[Theorem 7.9]{Bu23}. One can most likely adapt the general approach to Virasoro constraints through wall-crossing via vertex algebras to the self-dual setting. The only technical hurdle lies in checking that the operation $\heartsuit$ appearing in \cite[Theorem 4.17, Theorem 5.18]{Bu23} preserves Virasoro constraints just like the Lie bracket did. This seems plausible because a weaker version of locality \cite[(4.29)]{Bu23} still holds for Bu's twisted vertex algebra modules. Tracing through all the arguments on the vertex algebra side, one can pinpoint locality as the vital condition holding everything together. 
    \item Because both toric and del Pezzo surfaces admit full exceptional collections, a similar method to the one used for $\PP^2$, $\PP^1\times \PP^1$, and $\Bl_{\pt}\PP^2$ could potentially be applied to give another proof of Virasoro constraints for Gieseker semistable sheaves in this generality. As can be seen from the first two pages of \cite{rekuski}, there is an extensive body of literature that studies stability manifolds of quivers and surfaces. In fact, many of them focus on geometric and algebraic stability conditions whose origin is usually described by their name. 
    \item At the beginning of §\ref{sec:whatare}, I made the assumption that $Q$ has no cycles which I partially get rid of later on. I did so for two reasons:
    \begin{enumerate}[label = (\roman*)]
        \item I wanted to work with projective moduli spaces of semistable objects which may be violated once cycles are present, but sometimes relations can be used to correct this. It boils down to whether the moduli scheme of semisimple representations for a given $\ov{d}$ is projective because then one can still apply the proof of \cite[Proposition 4.3]{King}. This last condition is always satisfied if the resulting path algebra is finite-dimensional which allows me to state Theorem \ref{thm:first} (I) in its full generality. 
        
        \item Increasing stability conditions used in the proof of Theorem \ref{thm:main} no longer make sense. Still, one can find in cases where cycles are cancelled out by relations example-specific $\mu$-stabilities that again give rise to point-like moduli schemes. One can incorporate such examples into the statement of Theorem \ref{thm:main}, but I did not have an immediate application in mind.
    \end{enumerate}
    If there are cycles that force the moduli spaces to be non-compact, then one can still ask for projective fixed point loci with respect to the torus rescaling the cycles. One thus lands in the realm of equivariant Virasoro constraints. Just like in the case of the equivariant Segre--Verlinde correspondence studied in \cite{BH}, the cohomological degree is no longer constrained to $0$ but lies in the interval $[-1.\infty)$ once an additional admissibility condition is checked. Therefore, one will need a whole family of Virasoro constraints for all non-zero degrees similar to the higher degree Segre--Verlinde correspondence observed in \cite[Theorem 1.11]{BH}.
\end{enumerate}
\section*{Acknowledgements}
  I would like to thank R. Pandharipande for encouraging the completion of this project. This work originated from the collaboration with W. Lim and M. Moreira. Finally, I greatly appreciate the discussions I had on this topic with P. Bielmans, P. Bousseau, T. Bridgeland, A. Jacovskis, D. Joyce, H. Liu, E. Macrì and F. Rota.

  I was supported by ERC-2017-AdG-786580-MACI. This project has
received funding from the European Research Council (ERC) under the European
Union Horizon 2020 research and innovation program (grant agreement No 786580). During the final phase of writing this work, I stayed at Academia Sinica.

  \section*{Notation and Conventions}
\begin{center}
\begin{tabular}{p{3.5cm} p{10.5cm}}
$\delta_n$& Shorter notation for the Kronecker symbol $\delta_{n,0}$.
\\
$\NN_0$ & The natural numbers.\\
$V$& The set of vertices of a quiver $Q$.\\
$V_{\bullet}$& The underlying vector space of a vertex algebra.\\
    $v,w$& Elements of $V$ sometimes viewed as generators of $K^0(Q)$.\\
$a,b,c$& Elements of $V_\bullet$.
    \\
    $\Lambda,\Lambda_{\CC}$& The lattice $\Lambda=K^0(Q)=\ZZ^{V}$ and the associated complex vector space $\Lambda_{\CC} = \Lambda\otimes_{\ZZ}\CC$.\\
    $\Lambda_+$&The subset $(\NN_0)^V\backslash \{0\}$ of $\Lambda$.\\
    $u,x,y$& Elements of $\Lambda_{\CC}$.\\
    $\deg(-)$ & Degree for any graded vector space. 
    \\
     $L_n, T_n, R_n,$& Virasoro operators acting on homology and vertex algebras.\\
    $\mathsf{L}_n, \mathsf{T}_n,\mathsf{R}_n,$& Dual operator notation on cohomology and the descendent algebra.\\
    $\mathsf{L}_{\inv}$& The weight zero Virasoro operator on the descendent algebra.\\
    $\mathfrak{M}_Q$& The Artin stack of representations of a quiver $Q$.\\
    $\MQ$& The higher stack of complexes of representations of a quiver $Q$.
        \end{tabular}
\end{center}
I will always work over the field $\CC$ so I will most often leave it out of the notation. For example, I will just write vector space instead of a $\CC$-vector space. Similarly, tensor products over $\CC$ will be denoted just by $(-)\otimes (-)$.
\section{Definition of Virasoro constraints for quivers with relations}
\label{sec: virasoro constraints}
\subsection{Graded algebras}
\label{sec:gradedalg}
Leading up to the definition of Virasoro constraints in this setting, I recall some basics about commutative graded algebras based on \cite[§2.1]{BML}. Compared to loc. cit., the degree of a commutative graded unital algebra $D_\bullet$ takes values only in $2\ZZ$. We, therefore, change the previous convention by dividing $\deg(v)\in 2\ZZ$ by 2 and making the present algebras commutative $\ZZ$-graded. Henceforth, I only use graded to mean $\ZZ$-graded and all algebras will be unital. 

The convention will always be that if $C_{\bullet}$ is a graded vector space, then the associated commutative graded algebra is the symmetric algebra $D_{\bullet} = \Sym[C_\bullet]$. If instead, one starts from $C^\bullet$ which denotes the graded dual of $C_\bullet$, then the associated dual graded algebra $D^\bullet$ is the completion of $\Sym[C^\bullet]$ with respect to the degree. Explicitly, this means 
\[D^\bullet\coloneqq \Sym\llbracket C^\bullet\rrbracket=\prod_{i\geq 0}\Sym[C^\bullet]^i\]
where $\Sym[C^\bullet]^i$ denotes the degree $i$ part of $\Sym[C^\bullet]$ with the degree induced by the one on $C^\bullet$.

For the mutually dual vector spaces $C_{\bullet}$ and $C^\bullet$, I denote the natural pairing by  $\langle-,-\rangle\colon C^\bullet\times C_\bullet\to \CC$. This induces a cap product 
$$
\cap\colon C^\bullet\times D_\bullet\longrightarrow D_\bullet
$$
as a derivation restricting to $\langle-,-\rangle$ on $C^\bullet\times C_{\bullet}$.  To obtain 
$$
\cap\colon D^\bullet\times D_\bullet\longrightarrow D_\bullet\,,
$$
one simply requires $(\mu\nu)\cap u =\mu \cap(\nu\cap u)$ making $D_\bullet$ into a left $D^\bullet$ module. Fixing a basis $B$ of $C_{\bullet}$ allows me to write 
$$
\nu \cap (-) = \sum_{v\in B}\langle \nu,v\rangle\, \frac{\partial}{\partial v}\,.
$$
After composing the cap-product with the algebra homomorphism $D_{\bullet}\to \CC$ induced by $C_\bullet\to \CC$, one is left with the pairing 
$$
\langle-,-\rangle : D^{\bullet}\times D_{\bullet}\longrightarrow \CC\,.
$$
Starting from a $\CC$-linear $f: D^\bullet_1\to D^\bullet_2$, this will allow me to define the dual map $$f^{\dagger}: D_{2,\bullet}\to D_{1,\bullet}\,.$$
\subsection{Background on quivers with relations}
\label{sec:background}
Let $Q=(V,E)$ be a connected quiver with the set of vertices $V$ and edges $E$. For each $e\in E$, I denote by $t(e),h(e)\in V$ the vertex at the tail, respectively the head of $e$. It will be important later on to allow some vertices to be \textit{frozen} which leads to a setting similar to pairs of sheaves and their moduli schemes. The subset of frozen vertices will be denoted by $F\subset V$ and unlike the usual vertices labelled by $\bullet$, I will use $\diamond$ to represent the frozen ones. Additionally, I require that each frozen vertex is a source of the quiver (no arrows are going into it) which will allow us to replace the general scenario of any number of frozen vertices with a quiver with just one vertex (see Definition \ref{def:onefrozen}).

The simplest example to keep in mind is the $A_{l-1}$-quiver
\begin{equation}
\label{Eq:equiv}
\begin{tikzcd}
\overset{l-1}{\bullet}\arrow[r]&\overset{l-2}{\bullet}\arrow[r]&\overset{l-3}{\bullet}\arrow[r]&\cdots\arrow[r]&\overset{1}{\bullet}\,,
\end{tikzcd}
\end{equation}
for some $l>1$. After adding a frozen vertex $\overset{\infty}{\diamond}$ with multiple edges going towards each of the original vertices, I will be able to recover partial flag-varieties as the moduli schemes of representations of the quiver in \eqref{eq:framedflag}. 

\begin{definition}
\label{def:homorela} Let $\mA_Q$ be the path algebra of $Q$, $\mA_{v,w}$ its linear subspace spanned by paths starting at $v$ and ending at $w$, and $\mA^{(k)}_Q$ be the two-sided ideal generated by paths of length $k$. The relations of $Q$ are determined by choosing a two-sided ideal $\mR \subset \mA^{(2)}_Q$. A set of generators $R$ of $\mR$ is said to be \textit{homogeneous} if for each $r\in R$, one can choose $t(r),h(r)\in V$ such that $r\in \mA_{t(r),h(r)}$. Such $r$ are called \textit{homogeneous generators}.

 A \textit{quiver with relations} is given by the data $(Q,\mR)$. If additionally, the vertices determined by the set $F$ are frozen, I indicate it by writing $(Q,F,\mR)$ and assume that $t(r)\notin F$ for any homogeneous $r\in\mR$.
\end{definition}

A representation $(\ov{U},\ov{f})$ of $(Q, F, \mR)$ consists of 
\begin{enumerate}
    \item a vector space $U_v$ for each $v\in V $ together with a fixed isomorphism $U_v\cong \CC^{d_v}$ for some $d_v\geq 0$ if $v\in F$,
    \item a collection of morphisms $f_e: U_{t(e)}\to U_{h(e)}$ for each $e\in E$, which induce a representation of $\mA_{\mQ}/\mR$ on $U=\bigoplus_{v\in V}U_v$ mapping $e$ to $f_e$. This means that the morphisms $f_e$ satisfy all the relations in $\mR$. 
\end{enumerate}

In the following, I assume that there are no relations for $Q$ and include them later in the Remark \ref{rem:relations}. The \textit{dimension vector} $\ov{d} = \ov{\dim}(\ov{U}, \ov{f})$ is defined by $d_v=\dim(U_v)$. Unless specified otherwise, I will always fix $\ov{d}\in (\NN_0)^V\backslash \{0\}=\Lambda_+$ with 
$$
\sum_{v\in F}d_v >0\,.
$$

One can choose $U_v=\CC^{d_v}$ for every $v\in F$, and one can choose such an identification for any $v\in V\backslash F$ up to some isomorphism of representations. One constructs the space
$$
A_{Q,\ov{d}}  = \bigoplus_{e\in E}\Hom\big(U_{t(e)},U_{h(e)}\big)
$$
parametrizing all representations of $Q$ with dimension vector $\ov{d}$. It carries the trivial rank $d_v$ vector bundles $\un{U_v}$ for each $v\in V$ with universal morphisms $$\un{\Ff_e}: \un{U_{t(e)}}\longrightarrow \un{U_{h(e)}}\,.$$ 

The symmetry group acting $A_{Q,\ov{d}}$ is
$$
GL_{\ov{d}} = \prod_{v\in V\backslash F}GL(U_v)\,,
$$
and it acts on $A_{Q,\ov{d}}$ by conjugation of $f_e$ for all $e$. This action lifts to $\un{U_v}$ via the canonical action of $GL(U_v)$ on the identical fibres. This makes $\un{\mathfrak{f}_e}$ into a morphism of $GL_{\ov{d}} \,$-equivariant vector bundles. Therefore,  one obtains the induced morphisms 
$$
\Ff_{e}: \mU_{t(e)}\longrightarrow \mU_{h(e)}
$$
between the universal vector bundles on the stack 
\begin{equation}
\label{eq:MAGL}
\mathfrak{M}_{Q,\ov{d}}=\big[A_{Q,\ov{d}}/GL_{\ov{d}}\big]\,.
\end{equation}
\begin{remark}
\label{rem:relations}
    To include relations, one needs to replace the original definition of  $A_{Q,\ov{d}}$ by the closed $GL_{\ov{d}} \,$-invariant subset $A_{Q,\ov{d}}\subset \bigoplus_{e\in E}\Hom\big(U_{t(e)},U_{h(e)}\big)$ consisting of representations $(\ov{U}, \ov{f})$ satisfying the relations. In general, this subset can be described as the vanishing locus of a $GL_{\ov{d}}\,$-invariant section of a $GL_{\ov{d}}\,$-equivariant vector bundle on the latter affine space. This still gives the description of $\mathfrak{M}_{Q,\ov{d}}$ as the quotient $\big[A_{Q,\ov{d}}/GL_{\ov{d}}\big]$ which is a closed substack of the moduli stack of representations without relations. 
\end{remark}
\subsection{Semistable representations and their moduli schemes
}
\label{sec:semrepmod}
The goal is to study the intersection theory of moduli schemes of semistable quiver representation, and I focus on \textit{slope stability} at first which is defined in terms of a map 
\begin{equation}
\label{eq:mustability}
\mu: K^0(Q)\longrightarrow \RR\,,\qquad \mu(\ov{d}) = \frac{\sum_{v\in V}\mu_v d_v}{\sum_v d_v}
\end{equation}
for some  $\ov{\mu}\in \RR^{V}$. A representation $\ov{U}$ with dimension vector $\ov{d}$ is said to be (semi)stable if for every proper subrepresentation $\ov{W}\subset \ov{U}$ with dimension vector $\ov{c}$ we have
$$
\mu(\ov{c})(\leq)<\mu(\ov{d})\,.
$$
It is important to note here that for $\ov{W}$ to be called a subrepresentation, it must satisfy $c_{v}=d_v$ or $c_v=0$ whenever $v\in F$. 

When there are no frozen vertices, King constructed in \cite[§3 and §4]{King} the coarse moduli schemes $M^{\mu}_{Q,\ov{d}}$ of $\mu$-semistable representations with dimension vector $\ov{d}$, and he proved in \cite[Proposition 4.3]{King} that they are projective if $\mA_Q$ is finite-dimensional. I will from now on only consider $Q$ such that this holds.

I will also need projectivity of the moduli schemes with included frozen vertices. I denote the moduli schemes by $M^{\mu}_{Q,\ov{d}}$ without specifying the set $F$. By Definition \ref{def:onefrozen}, I can always replace any number of frozen vertices by a single one labeled $\infty$ with $d_{\infty}=1$. The $\mu$-stability is preserved under this operation, and the resulting moduli schemes can be constructed without distinguishing whether $\infty$ is frozen or not because freezing a dimension 1 vertex is exactly equivalent to rigidifying the moduli stack $\mathfrak{M}_{Q,\ov{d}}$. Applying \cite{King} when $\mA_Q$ is finite-dimensional, one concludes projectivity also in this setting.

 Note that by Definition \ref{def:framedrep2}, framed representations studied by Nakajima in \cite{Nakajimaquiver}, can also be obtained from working with a single frozen vertex. Consequently, this gives an alternative proof of projectivity of moduli schemes of framed representations which was originally shown in \cite[Theorem 3.5]{Nakajimaquiver}. Concerning relations, one only needs to note that the moduli schemes of semistable representations of $(Q,F,\mR)$ are Zariski closed subsets of $M^{\mu}_{Q,\ov{d}}$ for the quiver without relations so their projectivity is implied by the result for the latter space.

In the case that there are no strictly semistable representations, one may ask whether $M^{\mu}_{Q,\ov{d}}$ is fine and thus admits a universal representation. As I will need it later on, I rephrase the proof of a result from \cite[§5]{King} giving a positive answer to this question under additional restrictions. The current formulation is compatible with the geometric construction of vertex algebras in §\ref{sec:recallVA} based on \cite{Jo17} and introduces some notation that will be used there.
\begin{proposition}[{\cite[Proposition 5.3]{King}}]
\label{prop:unirep}
Let $F=\emptyset$, $\ov{d}$ be such that $d_v$ are relatively prime integers and there are no strictly $\mu$-semistables $(\ov{U},\ov{f})$ with $\ov{\dim}(\ov{U},\ov{f}) = \ov{d}$, then the moduli scheme $M^{\mu}_{Q,\ov{d}}$ admits a universal representation with universal morphisms
$$
\Ff_e: \UU_{t(e)}\longrightarrow  \UU_{h(e)}\quad \textnormal{for each} \quad e\in E
$$
between the universal vector spaces $\{\UU_{v}\}_{v\in V}$. If instead there is at least one frozen vertex with $d_v\neq 0$, then by the connectedness of $Q$ there is a canonical choice of a universal representation $(\ov{\UU},\ov{\Ff})$.
\end{proposition}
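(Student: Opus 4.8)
The plan is to realize $M^\mu_{Q,\ov d}$ as a geometric GIT quotient of $A_{Q,\ov d}$ and then ask whether the tautological data descends. I would first recall from King's construction that $M^\mu_{Q,\ov d}$ is the quotient of (the relation subscheme of) the affine space $A_{Q,\ov d}$ by $GL_{\ov d}$ for the linearization encoding $\mu$. Since there are no strictly $\mu$-semistables, the semistable and stable loci coincide, and the quotient map $\pi\colon A^{\mathrm{s}}_{Q,\ov d}\to M^\mu_{Q,\ov d}$ is a geometric quotient. The tautological bundles $\un{U_v}$ and morphisms $\un{\Ff_e}$ are $GL_{\ov d}$-equivariant by construction (they pull back the $\mU_v,\Ff_e$ on $\mathfrak{M}_{Q,\ov d}$ along the atlas $A_{Q,\ov d}\to\mathfrak{M}_{Q,\ov d}$), so the entire question becomes whether they descend along $\pi$.

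The tool for this is the descent criterion for good quotients by a reductive group (Kempf's descent lemma): a $GL_{\ov d}$-equivariant bundle on $A^{\mathrm{s}}_{Q,\ov d}$ descends to $M^\mu_{Q,\ov d}$ exactly when, at every point of a closed orbit, the stabilizer acts trivially on the fiber. So I would next pin down the stabilizers. The stable points have closed orbits, and a $\mu$-stable representation is simple, so $\End(\ov U,\ov f)=\CC$. When $F=\emptyset$ this forces the stabilizer of any stable point to be the diagonal one-parameter subgroup $\Delta=\{(\lambda\,\mathrm{id}_{U_v})_{v}:\lambda\in\GG_m\}\subset GL_{\ov d}$, which acts trivially on $A_{Q,\ov d}$ and acts on each $\un{U_v}$ with weight $+1$. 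Thus the $\un{U_v}$ fail to descend, and the obstruction is precisely this diagonal weight.

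To kill it I would twist by a line bundle coming from a character. A character $\prod_v\det^{a_v}$ of $GL_{\ov d}$ gives an equivariant line bundle $L_{\vec a}$ on which $\Delta$ acts with weight $\sum_v a_v d_v$. Since the $d_v$ are relatively prime, Bézout's identity produces integers $a_v$ with $\sum_v a_v d_v=-1$; then $\Delta$ acts trivially on $\UU_v:=\un{U_v}\otimes L_{\vec a}$, which therefore descends to a bundle on $M^\mu_{Q,\ov d}$, and the morphisms descend as $\Ff_e:=\un{\Ff_e}\otimes\mathrm{id}_{L_{\vec a}}$. This yields the universal representation and proves fineness; the presence of relations only replaces $A_{Q,\ov d}$ by a closed invariant subscheme and is harmless here. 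Note that $\vec a$ is determined only modulo the relation $\sum_v a_v d_v=0$, which is the source of the usual non-canonicity of the universal family.

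For the second statement, a frozen vertex $w$ with $d_w\neq 0$ rigidifies the picture, and this is where the main subtlety lies. Now $GL_{\ov d}=\prod_{v\notin F}GL(U_v)$ contains no trivially acting diagonal torus, since edges emanating from $w$ get scaled. Concretely, an element of the stabilizer of a stable point, completed by $\mathrm{id}_{U_w}$ at the frozen vertices, is an automorphism of the representation, hence a scalar $\lambda\,\mathrm{id}$ by simplicity; as it must restrict to the identity on $U_w$ with $d_w\neq 0$, we get $\lambda=1$, so the stabilizer is trivial. Connectedness of $Q$ is exactly what guarantees the single scalar is pinned to $1$ at \emph{every} vertex once fixed at $w$. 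Hence $\pi$ is a principal $GL_{\ov d}$-bundle and the $\un{U_v},\un{\Ff_e}$ descend with no twist and no choice, giving the canonical $(\ov\UU,\ov\Ff)$. I expect the descent step to be the principal obstacle: one must check carefully that closed orbits are exactly the stable orbits and that the stabilizers are exactly $\Delta$ (resp.\ trivial), and then match the diagonal weight to the linear Diophantine equation — the relatively-prime hypothesis being used in precisely the one place where that equation is solved.
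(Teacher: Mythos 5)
Your treatment of the $F=\emptyset$ case is correct and is, in substance, exactly the paper's argument: both of you identify the obstruction as the diagonal torus $\Delta$ acting with weight $1$ on each tautological bundle and trivially on $A_{Q,\ov{d}}$, and both of you kill it by a determinant twist whose existence is precisely Bézout's identity for the coprime $d_v$. The paper twists $\tilde{\mU}_v=\prod_{w\in V}\det^{-\lambda_w}(\mU_w)\otimes \mU_v$ with $\sum_w\lambda_w d_w=1$ and descends from $\mathfrak{M}_{Q,\ov{d}}$ to the rigidification $[A_{Q,\ov{d}}/PGL_{\ov{d}}]$, while you twist by the character line bundle $L_{\vec a}$ with $\sum_v a_v d_v=-1$ and invoke Kempf's descent criterion for the geometric GIT quotient; setting $a_v=-\lambda_v$ these are the same computation in different packaging, since descent along the $\GG_m$-gerbe $\mathfrak{M}_{Q,\ov{d}}\to \mathfrak{M}^{\rig}_{Q,\ov{d}}$ is exactly the condition that the $\Delta$-weight of the fiber vanishes.

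In the frozen-vertex case, however, the step ``an automorphism of the representation, hence a scalar $\lambda\,\mathrm{id}$ by simplicity'' is a genuine gap: a $\mu$-stable representation with frozen vertices need not be simple as a plain representation of $Q$, and its plain automorphism group can be strictly larger than the scalars, so Schur's lemma is not available. Concretely, take $Q=\{\diamond\to\bullet\}$ with $\ov{d}=(2,1)$ and $f:\CC^2\to\CC$ surjective: the only admissible proper nonzero subrepresentation (in the frozen sense, where $c_\diamond\in\{0,2\}$) is $(0,\CC)$, so this representation is stable whenever $\mu_\bullet<\mu_\diamond$; yet $(\ker f,0)$ is a plain subrepresentation, and any $g\in GL_2$ preserving $\ker f$ and inducing multiplication by $\nu$ on $\CC^2/\ker f$, paired with $\nu$ at $\bullet$, is a non-scalar plain automorphism. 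The conclusion you want --- triviality of the stabilizer --- is true, but needs a different argument: given $(g_v)_{v\notin F}$ in the stabilizer, extend by the identity at frozen vertices and set $N=g-\mathrm{id}$. Then $\ker N$ is a subrepresentation containing every frozen $U_v$ in full, and $\mathrm{im}\,N\cong \ov{U}/\ker N$ is a subrepresentation vanishing at every frozen vertex, so both are admissible frozen subrepresentations; if $N\neq 0$ both are nonzero and proper, hence stability forces $\mu(\mathrm{im}\,N)<\mu(\ov{d})$, while the seesaw inequality applied to the quotient $\ov{U}/\ker N$ forces $\mu(\mathrm{im}\,N)>\mu(\ov{d})$, a contradiction. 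With this replacement your principal-bundle conclusion and the canonical choice of $(\ov{\UU},\ov{\Ff})$ go through; note also that your gloss on connectedness (``pinning the scalar'') is tied to the invalid Schur step and plays no role in the corrected argument. To be fair, the paper itself only asserts that the stabilizers are trivial, so you attempted strictly more than it does --- but the justification as written does not stand.
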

\begin{proof}
Let us first consider a quiver $Q$ with no relations and no frozen vertices. The action of $GL_{\ov{d}}$ on $A_{Q,\ov{d}}$ factors through the action of
$$
PGL_{\ov{d}} = GL_{\ov{d}}/\Delta
$$
where $\Delta$ is the one dimensional subtorus consisting of elements $(c\id_{U_v})_{v\in V}\in GL_{\ov{d}}$ for $c\in \GG_m$. The scheme $M^\mu_{Q,\ov{d}}$ is an open subscheme of the \textit{rigidification} $\mathfrak{M}^{\rig}_{Q,\ov{d}} = [A_{Q,\ov{d}}/PGL_{\ov{d}}]$. 

The canonical action of $GL_{\ov{d}}$ on $\un{U_v}$ does not factor through $PGL_{\ov{d}}$.  More precisely, the induced action
\begin{equation}
\rho:[*/\Delta]\times \mathfrak{M}_{Q,\ov{d}}\longrightarrow \mathfrak{M}_{Q,\ov{d}}\,,
\end{equation}
which rescales the automorphisms of each representation has weight $1$ on each $\mU_{v}$. That is
$$
\rho^*\,\mU_{v} = \mV\,\boxtimes \,\mU_{v}
$$
for the universal line bundle $\mV$ on $[*/\Delta]$. The solution is to twist the universal vector space $\mU_v$ by a trivial line bundle with a non-trivial action. In the original phrasing, this corresponds to changing the action on $\un{U_v}$ while preserving the action on $A_{Q,\ov{d}}$. The new universal vector space will have the form 
$$
\tilde{\mU}_v = \prod_{w\in V}\textnormal{det}^{-\lambda_w}(\mU_w) \otimes \mU_v
$$
for some $\lambda_w\in \ZZ$ and its pullback under the $[*/\Delta]$ action becomes 
$$
\rho^*(\tilde{\mU}_v) = \mV^{1-\sum_{w\in V}\lambda_w d_w }\boxtimes \tilde{\mU}_v\,.
$$
In conclusion, one needs to find integers $\lambda_w$ such that $\sum_{w\in V}\lambda_w d_w = 1$ which is equivalent to $d_v$ being relatively prime. Then $\tilde{\mU}_v$ descend to $\mathfrak{M}^{\rig}_{Q,\ov{d}}$ from  $\mathfrak{M}_{Q,\ov{d}}$, and I denote its restriction to $M_{Q,\ov{d}}$ by $\UU_{v}$. The same argument goes through if one allows relations in $Q$ and uses Remark \ref{rem:relations}. Then the construction of $\tilde{\mU}_v$ descending to $\UU_{v}$ is the same.

In the presence of frozen vertices with $d_v\neq 0$, the stabilizer subgroup of $GL_{\ov{d}}$ for each stable representation is trivial. Thus one can restrict $\mU_{v}$ to the moduli scheme to obtain the canonical $\UU_{v}$.
\end{proof}

Independently of whether $M^{\mu}_{Q,\ov{d}}$ admits a universal representation, it always carries a (virtual) fundamental class whenever there are no strictly semistables. The additional need to use virtual classes enters only when introducing relations, but I will not distinguish between the two cases. I will always write 
\begin{enumerate}
    \item $[M^\mu_{Q,\ov{d}}]$ for the (virtual) fundamental class in $H_*(M^\mu_{Q,\ov{d}})$,
    \item the result of a cohomology class $\gamma\in H^*(M^\mu_{Q,\ov{d}})$ acting on $[M^\mu_{Q,\ov{d}}]$ as 
    $$
    \int_{M^\mu_{Q,\ov{d}}}\gamma\,.
    $$
\end{enumerate}
\subsection{Descendents for quivers}
In \cite{BML}, we followed the approach to defining Virasoro operators found in the existing literature. This required us to introduce the \textit{formal algebra of descendents} as an auxiliary structure. The situation is simpler when working with quivers so I will work with the $\ov{d}$-dependent algebra $\TT^Q_{\ov{d}}$ directly:
\begin{definition}\label{def: descendentalgebra}
Let $\textnormal{T}^Q_{\ov{d}}$ denote the infinite-dimensional vector space over $\CC$ with a basis given by the collection of symbols
\[\tau_i(v)\quad\textup{ for }\quad i>0,\, v\in V\]
called \textit{vertex descendents}\footnote{I chose a different notation from \cite{BML} here because I want to distinguish between the geometric variables $\ch_i(\gamma)$ defined for each cohomology class $\gamma\in H^\bullet(X)$ and $\tau_i(v)$ of a more algebraic origin.}.  To keep track of $i$ we introduces the $\ZZ$-grading $\deg\tau_i(v) = i$ for $v\in V$. 

The above definition clearly does not depend on $\ov{d}$, but I keep track of the dimension vector as I will also work with 
$$
\tau_0(v) = d_v
$$
considered as elements of the unital algebra $\TT^Q_{\ov{d}}$. 
\end{definition}
\begin{remark}
\label{rem:TQiscohM}
In the case that $F=\emptyset$, I will recall in §\ref{sec:JoyceVA} that $\TT^Q_{\ov{d}}$ is naturally isomorphic to the cohomology of the moduli stack $\mM_{\ov{d}}$ of all perfect complexes of representations with dimension vector $\ov{d}$. Therefore $\tau_i(v)$ for $i>0$ form a set of natural generators of $H^{\bullet}(\mM_{\ov{d}})$.
\end{remark}
For the next definition, I assume for now that either one of the situations of Proposition \ref{prop:unirep} holds. This is so that there is a universal representation $(\ov{\UU},\ov{\Ff})$ on $M^{\mu}_{Q,\ov{d}}$. I will remove this restriction later on.
\begin{definition}
   Fix a universal representation $(\ov{\UU},\ov{\Ff})$ on $M^{\mu}_{Q,\ov{d}}$, then the \textit{realization morphism} $\xi_{\ov{\UU}}: \TT^Q_{\ov{d}}\to H^{\bullet}(M^{\mu }_{Q,\ov{d}})$ is a morphism of algebras defined on the generators by 
    $$
    \xi_{\ov{\UU}}\big(\tau_i(v)\big) = \ch_i(\UU_v) \quad\textnormal{for}\quad i>0,v\in V\,.
    $$
Unless it is strictly necessary, I will not specify the choice of the universal representation $\ov{\UU}$ from now on thus neglecting to write it in the subscript of $\xi_{\ov{\UU}}$.
\end{definition}

In \cite[Example 2.6, Remark 4.13]{BML}, we compared the form of the Virasoro constraints for moduli schemes of sheaves to the Chern character of the virtual tangent bundle expressed in terms of descendents. Presently, there is a similar correspondence which is what allows me to guess the correct Virasoro constraints in §\ref{sec:vircondef}. The following notations are introduced to allow a compact way to state this observation.
\begin{definition}
\label{def:eulerform}
For $(Q,\mR)$, I always fix a set $R$ of homogeneous generators of the ideal $\mR$. Note that if one includes non-trivial $F$, then based on the assumptions there are no relations that simultaneously start and end at a frozen vertex.
Define the \textit{adjacency matrix} $A^Q$ and the \textit{relation matrix} $S^Q$ with entries
\begin{align*}
A^Q_{v,w} &= \# \{e\in E\colon t(e) = v, h(e) = w\}\,,\\
S^Q_{v,w} &= \# \{r\in R\colon t(r) = v, h(r) = w\}
\end{align*}
labelled by $(v,w)\in V\times V$. Note that in the literature, one often means the adjacency matrix $A^{\Gamma}$ of the underlying undirected graph $\Gamma$ which is related to $A^Q$ by $A^{\Gamma} = A^Q+(A^Q)^{T}$. 
The \textit{naive Euler form} (which I from now on just call Euler form despite it being a misnomer by Remark \ref{rem:wrongobstruction}) of $(Q,F,\mR)$ is the pairing $\chi: \ZZ^{V}\times \ZZ^{V}\to \ZZ$ defined by 
\begin{equation}
\label{eq:eulerform}
\chi\big(\ov{c},\ov{d}\big) =\sum_{v\in V\backslash F}c_v\cdot d_v -\sum_{e\in E}c_{t(e)}\cdot d_{h(e)} + \sum_{r\in R}c_{t(r)}\cdot c_{h(r)}\,.
\end{equation}
In terms of the standard pairing $\langle \ov{c},\ov{d}\rangle=\sum_{v}c_v\cdot d_v $, this can be written as 
$$
\chi\big(\ov{c},\ov{d}\big) = \langle \ov{c},(\pi_{V\backslash F}-A^Q + S^{Q})\cdot \ov{d}\rangle\,,
$$
where $\pi_{V\backslash F}: \ZZ^V\to \ZZ^{V\backslash F}$ is the projection.
To draw a parallel between the Euler forms for quivers and varieties, I thus introduce the notation $\td(Q) = \pi_{V\backslash F}-A^Q + S^Q$.\footnote{Sometimes varieties can be derived equivalent to quivers which in particular identifies the Euler forms on both sides. It can be easily checked that in the case $D^b(\PP^1)\cong D^b(\bullet\rightrightarrows \bullet)$ the two Todd classes are not identified so this should be only viewed as notation.}
\end{definition}
\begin{remark}
    \label{rem:wrongobstruction}
    The reason for calling $\chi$ the naive Euler form is that it is not defined by 
    $$
    \chi(\ov{c},\ov{d}) =\sum_{i\geq 0}\Ext^i(\ov{U},\ov{W})
    $$
    where $\ov{\dim}(\ov{U}) =\ov{c}$ and $\ov{\dim}(\ov{W}) =\ov{d}$. Firstly, the homological dimension of a quiver with relations need not be restricted by 2, and secondly, the homogeneous generating set $R$ would need to be minimal so that one does not include redundant relations into the last sum in \eqref{eq:eulerform}. See also the comment in \cite[Method 6.22]{Jo21}.

The form $\chi$ is introduced in \cite[(6.5)]{Jo21} as a consequence of the obstruction theory of $M^{\mu}_{Q,\ov{d}}$ chosen there. It is recalled in \eqref{ex:Tvir} and is the obstruction theory of the \un{derived} vanishing locus enhancing the construction mentioned \ref{rem:relations}.  In this sense, the theory is truly meaningful only when the above two requirements on $Q$ and $R$ are satisfied.
\end{remark}
\begin{example}
\label{ex:Tvir}
 The virtual tangent bundle $T^\vir$ of $M^{\mu}_{Q,\ov{d}}$ used in \cite[§6.4.2]{Jo21} is constructed from the complex
$$
\bigoplus_{v\in V\backslash F} \UU_v^*\otimes \UU_v\stackrel{\mathfrak{f}_E}{\longrightarrow} \bigoplus_{e\in E} \UU^*_{t(e)}\otimes \UU_{h(e)}\stackrel{\mathfrak{s}_R}{\longrightarrow} \bigoplus_{r\in R} \UU^*_{t(r)}\otimes \UU_{h(r)}
$$
in degrees $[-1,1]$ by adding a copy of $\mO_{M_{\ov{d}}}$ in degree 0 if $F=\emptyset$. The morphism $\mathfrak{f}_E$ is the difference between pre-compositions and post-compositions by the universal morphisms 
$$\mathfrak{f}_E = \big( \circ \mathfrak{f}_{e} - \mathfrak{f}_{e}  \circ \big)_{e\in E}$$ 
with the domains of the morphisms $\circ \mathfrak{f}_e$ and $\mathfrak{f}_e \circ$ being $\UU^*_{h(e)}\otimes \UU_{h(e)}$ and $\UU^*_{t(e)}\otimes \UU_{t(e)}$, respectively. The map $\Fr$ has been spelled out in \cite[Definition 6.9]{Jo21}, and I will not recall it here.

Since $v\in \ZZ^V$ can stand for the generator of $\ZZ \cdot v$, one may consider its linear combinations, especially $\td(Q)\cdot v$. I use the notation 
$$
\tau_{i}\tau_j(\sum_{v,w\in V}c_{v,w}\,v\boxtimes w) = \sum_{v,w\in V}c_{v,w}\,\tau_i(v)\cdot \tau_j(w)
$$
for any formal expression $\sum_{v,w}c_{v,w}\,v\boxtimes w$ with rational coefficients $c_{v,w}$. After setting 
$$
\Delta_*\td(Q) = \sum_{v\in V}\td(Q)\cdot v\boxtimes v\,,
$$
and using the shorthand notation $\delta_n = \delta_{n,0}$, I can write the Chern character of $T^\vir$ in the compact form
\begin{equation}
\label{eq:Tvir}
\ch\big(T^\vir\big) = -\sum_{i,j\geq 0}(-1)^i\tau_i\tau_j\big(\Delta_*\td(Q)\big) + \delta_{|F|}\,.
\end{equation}
\end{example}
\subsection{Universal formulation of Virasoro operators for quivers}
\label{sec:vircondef}
Just as in the geometric case, Virasoro constraints for quivers are expressed mainly in terms of two operators $\bR_k, \bT_k: \TT^Q_{\ov{d}}\to \TT^Q_{\ov{d}}$ which add up to 
$$
\bL_k = \bR_k + \bT_k\,.
$$
The term $\bR_k$ does not change compared to \cite[§2.3]{BML} while $\bT_k$ can be derived from the form of \eqref{eq:Tvir} as explained already in \cite[Example 2.6]{BML}. They are defined for any $k\geq -1$ as follows:
\begin{definition}
\label{def:virop}
    \begin{enumerate}
        \item The operator $\bR_k$ is a derivation on $\TT^Q_{\ov{d}}$ acting on the generators by
        $$
        \bR_k\big(\tau_i(v)\big) = \prod_{j=0}^k (i+j)\,\tau_{i+k}(v)\quad \textnormal{for}\quad i>0, v\in V
        $$
        where the product is equal to 1 if $k=-1$,
        \item and $\bT_k$ acts by multiplication with 
        $$
      \sum_{\begin{subarray}{c} i+j=k \\ i,j \geq 0 \end{subarray}}i!j!\tau_i\tau_j\big(\Delta_*\td(Q)\big)+\delta_{k}(1-\delta_{|F|})\,.
        $$
    \end{enumerate}
\end{definition}
When $F=\emptyset$, the quiver Virasoro operators take identical form as the Virasoro operators for sheaf moduli schemes in \cite[§2.3]{BML} if one replaced the vertices $v$ with cohomology classes $\gamma$, $\td(Q)$ with $\td(X)$ for some variety $X$ and assumed $H^{p,q}(X) =0$ whenever $p\neq q$.
\begin{remark}
\label{rem:nopairVir}
As before, I distinguished two cases akin to sheaf and pair Virasoro constraints, but we will see that they are one and the same:
\begin{enumerate}[label=\alph*)]
    \item When $F= \emptyset$, the extra $\delta_{|F|}$ term in \eqref{eq:Tvir} coming from rigidification is omitted. This is equivalent to working with the virtual tangent bundle of the stack $\Mf_{Q,\ov{d}}$ and is the analog of what we called sheaf Virasoro constraints previously.
    \item When frozen vertices are present, the stack that I denoted by $\Mf_{Q,\ov{d}}$ is already the rigidified stack so one should look at the virtual tangent bundle of $[*/\GG_m]\times \mM_{Q,\ov{d}}$ instead to determine the $\bT_k$ operator. Using the projection $\pi_2$ to the second factor, the K-theory class of the virtual tangent bundle of $[*/\GG_m]\times \mM_{Q,\ov{d}}$ is $\pi_2^* T^{\vir} - 1$. This leads to the additional $+1$ term in the definition of  $\bT_0$. This behavior already appeared as pair Virasoro constraints in \cite[Conjecture 2.18]{BML}, but the correction was hidden within the formula itself.
\end{enumerate}
\end{remark}
In the rest of this work, I will continue the trend of unifying the two points of view that were previously separated. To be able to relate \cite{BML} to the present results, I will use dashed notation to denote the former conventions. In practice, this means the following: 
\begin{equation}
\label{eq:dashednot}
\bL'_k = \bR_k + \bT'_k = \bR_k +\bT_k - \delta_k(1-\delta_{|F|}) = \bL_k - \delta_k(1-\delta_{|F|})\,.
\end{equation}

\subsection{Weight zero Virasoro constraints for quivers}
\label{sec:framedw0vir}
The algebra of \textit{weight-zero descendents} is defined by
$$
\TT^Q_{\inv,\ov{d}} = \ker(\bR_{-1})\,.
$$
Just as in \cite{BML}, $\TT^Q_{\inv{,\ov{d}}}$ plays the role of the cohomology of the rigidified moduli stack of $\mM_{\ov{d}}$ (see also Remark \ref{rem:TQiscohM}). This is where the name comes from because its elements are precisely the classes in $\TT^Q_{\ov{d}}$ labelled in \cite[Definition 2.5]{Bo21} as weight zero with respect to the $[*/\Delta]$-action. An alternative but equivalent way to think about it was presented in \cite[§2.4]{BML} where we introduced it as the subalgebra of descendent classes $\tau$ in $\TT^Q_{\ov{d}}$ whose realization $\xi_{\ov{\UU}}(\tau)$ is independent of the choice $\ov{\UU}$.

To make Virasoro constraints also independent of choices, we introduced the \textit{weight-zero} Virasoro operator in \cite[Definition 2.11]{BML}.
\begin{definition}[{\cite[Definition 2.11]{BML}}]
\label{def:weight0}
   Define the weight-zero Virasoro operator by 
   $$
\Linv=\sum_{j\geq -1}\frac{(-1)^j}{(j+1)!}\bL_j \bR_{-1}^{j+1}: \TT^Q_{\ov{d}}\to \TT^Q_{\inv,\ov{d}}\,.
  $$
  Because integrals of the form
  $$
\int_{M^{\mu}_{Q,\ov{d}}}\xi_{\ov{\UU}}(\tau)\quad \textnormal{for any}\quad \tau \in \TT^Q_{\inv,\ov{d}}
  $$
  are independent of $\ov{\UU}$, I will omit specifying the choice of the universal representation altogether. I then say that $[M^{\mu}_{Q,\ov{d}}]$ satisfies Virasoro constraints if 
  $$
\int_{M^{\mu}_{Q,\ov{d}}}\xi\big(\Linv(\tau)\big)=0\quad \textnormal{for any} \quad  \tau\in \TT^Q_{\ov{d}}\,.
  $$
\end{definition}
To connect it to the more familiar formulation using countably many operators instead of just a single one, I recall that the weight-zero Virasoro constraints from the above definition are equivalent to including an auxiliary operator $\bS_k$. 
\begin{definition}
\label{def:Skvirasoro}
Fix a vertex $v\in V$ and assume that $(\ov{\UU},\ov{\Ff})$ is a \textit{$v$-normalized universal representation}, which means that it satisfies $$\xi_{\ov{\UU}} \big( \tau_1(v) \big) = 0\,.$$ 
Assuming that $d_v\neq 0$, the operator $\bS^{v}_k$ is given by $$
    \bS^v_k=-\frac{(k+1)!}{d_v}\bR_{-1}\big( \tau_{k+1}(v)\cdot -\big)\quad \textnormal{for}\quad k\geq -1\,.
    $$
    I will always choose $v\in F$ if possible in which case $\xi_{\ov{\UU}} \big( S^v_k(\tau)\big)$ is automatically 0 for all $k\geq 0$. 
Because the proof of \cite[Proposition 2.16]{BML} is formal, the weight zero Virasoro constraints from Definition \ref{def:weight0} are equivalent to the vanishing
    \begin{equation}
    \label{eq:SkVirasoro}
\int_{M^{\mu}_{Q,\ov{d}}}\xi_{\ov{\UU}} \big( (\bL_k + \bS^v_k)(\tau)\big)  = 0 \quad \textnormal{for each}\quad k\geq 0, \tau\in \TT^Q_{\ov{d}} \,.
    \end{equation}
     If it holds, I will say that $[M^{\mu}_{Q,\ov{d}}]$ satisfies the \textit{$v$-normalized Virasoro constraints}.
\end{definition}
Let us discuss a situation when $S^v_k$ becomes just multiplication by $\delta_{k}$. 
\begin{lemma}
\label{lem:Skvanishing}
    Suppose that $Q$ has no frozen vertices and $\ov{d}$ is its dimension vector such that $d_v = 1$ for a fixed $v\in V$, then there is a $v$-normalized universal representation $\ov{\UU}$ on $M^{\mu}_{Q,\ov{d}}$ such that $\xi_{\ov{\UU}}\big(\tau_i(v)\big) = 0$ for $i>0$ so $$\xi_{\ov{\UU}} \big( S^v_k(\tau)\big) = \delta_k \ \xi_{\ov{\UU}} ( \tau)\quad \textnormal{for} \quad k\geq 0$$  and any $\tau\in \TT^{Q}_{\ov{d}}$.
\end{lemma}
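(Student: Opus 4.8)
The plan is to exploit the single remaining ambiguity in the choice of a universal representation — tensoring all the $\UU_w$ by one fixed line bundle pulled back from the base — together with the hypothesis $d_v=1$, which forces $\UU_v$ to be a \emph{line} bundle and hence invertible. Gauging $\UU_v$ to the trivial bundle then kills all of its positive Chern characters at once.

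First I would invoke Proposition \ref{prop:unirep}: because $d_v=1$ the components $(d_w)_{w\in V}$ are automatically coprime, so (in the standing situation without strictly semistables) $M^{\mu}_{Q,\ov{d}}$ carries some universal representation $(\ov{\UU}^{\,0},\ov{\Ff}^{\,0})$, and $\UU^0_v$ has rank $d_v=1$, i.e. it is a line bundle. Next I would normalize by twisting. For any line bundle $L$ on $M^{\mu}_{Q,\ov{d}}$, the data $\UU_w:=\UU^0_w\otimes L$ together with $\Ff_e:=\Ff^0_e\otimes\id_L$ is again a universal representation, since tensoring every vertex space by the same line bundle from the base preserves all the relations and the universal property; this is precisely the ambiguity recorded by the $\det$-twists $\prod_{w}\det^{-\lambda_w}(\mU_w)$ in the proof of Proposition \ref{prop:unirep}. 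Taking $L=(\UU^0_v)^{-1}$, which is legitimate exactly because $\UU^0_v$ is invertible, produces a universal representation with $\UU_v\cong\mO_{M^{\mu}_{Q,\ov{d}}}$. Hence $\ch_i(\UU_v)=0$ for all $i>0$, so $\xi_{\ov{\UU}}\big(\tau_i(v)\big)=0$ for every $i>0$; in particular $\xi_{\ov{\UU}}\big(\tau_1(v)\big)=0$, so $\ov{\UU}$ is $v$-normalized.

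Finally I would read off $\xi_{\ov{\UU}}(\bS^v_k(\tau))$ from Definition \ref{def:Skvirasoro}. Using that $\bR_{-1}$ is a derivation with $\bR_{-1}\big(\tau_{k+1}(v)\big)=\tau_k(v)$, the Leibniz rule gives
$$
\bS^v_k(\tau)=-\frac{(k+1)!}{d_v}\Big(\tau_k(v)\cdot\tau+\tau_{k+1}(v)\cdot\bR_{-1}(\tau)\Big).
$$
Now apply the algebra homomorphism $\xi_{\ov{\UU}}$, which annihilates every factor $\tau_i(v)$ with $i>0$. For $k>0$ both $\tau_k(v)$ and $\tau_{k+1}(v)$ carry positive index, so both terms die and $\xi_{\ov{\UU}}(\bS^v_k(\tau))=0$. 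The only surviving contribution is at $k=0$, where the lowering operator turns $\tau_1(v)$ into the \emph{constant} $\tau_0(v)=d_v=1$; after applying $\xi_{\ov{\UU}}$ and cancelling the prefactor $d_v$ one is left with $\xi_{\ov{\UU}}(\tau)$. Assembling the two cases yields $\xi_{\ov{\UU}}(\bS^v_k(\tau))=\delta_k\,\xi_{\ov{\UU}}(\tau)$ for all $k\geq 0$ and all $\tau\in\TT^Q_{\ov{d}}$.

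The genuinely essential point — and the sole place where $d_v=1$ enters — is the trivialization step: it is exactly the rank-one condition that makes $\UU_v$ invertible and therefore lets one gauge away all of its positive Chern characters by a single global twist. Once $\xi_{\ov{\UU}}(\tau_i(v))=0$ is secured, the remaining computation is routine Leibniz bookkeeping, with the lone survivor being the constant term produced by $\bR_{-1}$ lowering $\tau_1(v)$ to $\tau_0(v)=d_v$.
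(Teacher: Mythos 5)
Your proof is correct and is essentially the paper's own proof: the paper trivializes $\UU_v$ by choosing $\lambda_w=\delta_{v,w}$ in the proof of Proposition \ref{prop:unirep} (legitimate precisely because $\sum_w\lambda_wd_w=d_v=1$), i.e.\ it performs your twist $\mU_w\mapsto \mU_v^{-1}\otimes\mU_w$ at the level of the stack $\mathfrak{M}_{Q,\ov{d}}$ before descending, whereas you perform the same rank-one twist on the moduli scheme after invoking Proposition \ref{prop:unirep}; the concluding Leibniz computation, which the paper leaves implicit (``so the conclusion of the Lemma holds''), is exactly the one you spell out.

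One caveat on signs. Applying $\xi_{\ov{\UU}}$ to your own displayed formula at $k=0$ leaves $-\tfrac{1}{d_v}\,\xi_{\ov{\UU}}\big(\tau_0(v)\cdot\tau\big)=-\xi_{\ov{\UU}}(\tau)$: the overall minus sign in Definition \ref{def:Skvirasoro} survives, so the computation actually yields $\xi_{\ov{\UU}}\big(\bS^v_k(\tau)\big)=-\delta_k\,\xi_{\ov{\UU}}(\tau)$, and your final sentence silently drops this sign in order to match the statement of Lemma \ref{lem:Skvanishing}. This is not a conceptual gap in your argument but an inconsistency internal to the paper: the displayed definition of $\bS^v_k$, and Proposition \ref{cor:PMimpliesadPM} (iv) where $\bS^{\infty}_k$ contributes $-\delta_k$ under the integral, agree with your computation, while the lemma as stated records $+\delta_k$ (and the parenthetical claim in Definition \ref{def:Skvirasoro} that the realization vanishes for all $k\geq 0$ is yet a third, incompatible, value at $k=0$). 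Whichever sign convention for $\bS^v_k$ one fixes, your argument proves the corresponding form of the lemma verbatim; just state the convention you use so that your displayed formula and your conclusion do not disagree.
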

\begin{proof}
This is just a special case of the proof of Proposition \ref{prop:unirep}. One may simply take $\lambda_w = \delta_{v,w}$ which leads to the universal vector spaces $\tilde{\mU}_w = {\mU_{v}}^{-1}\otimes \mU_{w}$ on $\mathfrak{M}_{Q,\ov{d}}$. As a consequence of this, $\UU_{v} = \mO$ so the conclusion of the Lemma holds. 
\end{proof}
\begin{remark}
\label{rem:frozenisrigid}
   We saw that the presence of $F\neq \emptyset$ rigidifies the representations of $Q$. In fact, all the frozen vertices can be collected into a single one labelled by $\infty$ with $d_{\infty}=1$ as in Definition \ref{def:onefrozen} without changing the moduli schemes of (semi)stable representations. Fixing $U_{\infty} =\CC$, is precisely what removes a copy of $\GG_m$ from the automorphisms of $(\ov{U},\ov{f})$. This way, one may also think of the lack of the $S_k$ operators in the pair Virasoro constraints \cite[Conjecure 2.18]{BML} as a consequence of the above lemma.
\end{remark}

\subsection{Framed Virasoro constraints}
\label{sec:framedvir}
I begin by recalling the usual definition of framed quiver representations and formulating Virasoro constraints for their moduli schemes. I then discuss the well-known approach to expressing framed representations in terms of ordinary ones which allows me to derive the framed Virasoro constraints from \eqref{eq:SkVirasoro}.

\begin{definition}
\label{def:framedrep1}
 Let $Q$ be a quiver with relations and $F=\emptyset$, then a \textit{framed representation} of $Q$ with the dimension vector $\ov{d}$ and the \textit{framing vector} $\ov{n} = (n_v)_{v\in V}\in \Lambda_+$\footnote{The framing vector is always required to be non-zero.} is the data of a representation $(\ov{U},\ov{f})$ of $Q$ together with a collection of \textit{framing morphisms} $\phi_v\in \Hom(\CC^{n_v}, U_v)$ for all $v\in V$.   A framed representation is called \textit{stable} if there is no subrepresentation of $(\ov{U},\ov{f})$ through which the maps $\phi_v$ factor simultaneously. 

 The moduli scheme of stable framed representations  
      $
      M^{\ov{n}}_{Q,\ov{d}}
      $ was constructed by Nakajima in \cite[Theorem 3.5]{Nakajimaquiver},
      and it carries a canonical universal framed representation $(\ov{\UU},\ov{\Ff}, \ov{\Phi})$ where 
      $$
      \Phi_v: \un{\CC^{n_v}}\longrightarrow \UU_v
      $$
      is the universal framing morphism. The construction of the universal objects is analogous to the case of quivers with frozen vertices.
\end{definition}

Now, I transition directly to the definition of Virasoro constraints on $M^{\ov{n}}_{Q,\ov{d}}$ and demonstrate its implication in the case of flag varieties. 
\begin{definition}
\label{def:framedVir}
    The descendent algebra of a quiver $Q$ with a fixed framing vector $\ov{n}$ and the dimension vector $\ov{d}$ is given by $\TT^Q_{\ov{d}}$ from Definition \ref{def: descendentalgebra}. The Virasoro operators are still written as a sum 
    $$
    \bL^{\ov{n}}_k = \bR_k+\bT_k^{\ov{n}}
    $$
    where $\bR_k$ remains unchanged, and
    $$\bT_k^{\ov{n}} = \bT_k - k!\tau_k(\ov{n})+\delta_{k}\,.$$
    They lead to \textit{framed Virasoro constraints} defined by 
    \begin{equation}
        \label{eq:framedVC}
        \int_{M^{\ov{n}}_{Q,\ov{d}}}\xi_{\ov{\UU}} \big( \bL^{\ov{n}}_k(\tau)\big) = 0 \quad \textnormal{for each}\quad k\geq 0, \tau\in \TT^Q_{\ov{d}}\,.
    \end{equation}
\end{definition}
  \begin{example}
  \label{ex:flag}
  Consider the quiver $Q$ described in \eqref{Eq:equiv}. Fix a dimension vector $\ov{d}$ together with $d_l$ satisfying 
    $$
    d_l>d_{l-1}>\cdots >d_2>d_1
    $$
    and consider the framed representations of $Q$ for the framing vector $\ov{n}$ given by $n_{l-1} = d_l$ and $n_i = 0$ otherwise. Clearly, the condition of stability given in Definition \ref{def:framedrep1} enforces 
    \begin{enumerate}
        \item that the morphism $\phi_{l-1}: \CC^{d_l}\to U_{l-1}$ is surjective.
        \item surjectivity of the maps $V_{i}\to V_{i-1}$ for all $i=2,\ldots, l-1$. 
    \end{enumerate}
   One can use this to identify $M^{\ov{n}}_{Q,\ov{d}}$ with the partial flag variety $\Flag(\ov{d})$ of length $l$ parametrizing sequences of surjective morphisms of vector spaces
   $$
   \begin{tikzcd}
       \CC^{d_l}\arrow[r, twoheadrightarrow]&E_{l-1}\arrow[r,twoheadrightarrow]&E_{l-2}\arrow[r, twoheadrightarrow]&\cdots \arrow[r, twoheadrightarrow]&E_2\arrow[r, twoheadrightarrow]&E_1
   \end{tikzcd}
   $$
   where $\dim(E_i) = d_i$.
   Under this comparison, the universal quotients $\EE_i$ on $\Flag(\ov{d})$ are identified with the universal vector spaces $\UU_i$ for all $i=1,\ldots, l-1$. Therefore, the descendent classes can be alternatively expressed as $\tau_k(i) = \ch_k(\EE_i)$ for $i=1,\ldots, l-1$. This amounts to an explicit description of the realization map $\xi$, operators $\bR_k,\bT^{\ov{n}}_k$, and therefore Virasoro constraints on $\Flag(\ov{d})$. 
\end{example}

   By \cite[Proposition 3.3]{reineke}, one can express stable framed representations using just the definitions from §\ref{sec:background} and §\ref{sec:semrepmod} which recovers the framed Virasoro constraints from \eqref{eq:SkVirasoro}. The natural approach is to use the \textit{quiver framed at $\infty$} by an additional vertex as it is also more suitable for wall-crossing. Consider a quiver $Q$ (potentially with relations) together with a framing vector $\ov{n}$, and construct a quiver $^{\ov{n}}_{\infty}Q$ by adding a frozen vertex $\infty$ with $n_v$ arrows going from $\infty$ to each $v\in V$:
\begin{equation}
\label{eq:framedflag}
\begin{tikzcd}[column sep = large]
&&\overset{\infty}{\diamond}\arrow[dll, bend right = 30, "\times n_{l-1} "]\arrow[dl, bend right = 15, "\times n_{l-2}"]\arrow[d,  "\times n_{l-3}"]\arrow[drr, bend left = 30, "\times n_{1}"]&&\\
\overset{l-1}{\bullet}\arrow[r]&\overset{l-2}{\bullet}\arrow[r]&\overset{l-3}{\bullet}\arrow[r]&\cdots\arrow[r]&\overset{1}{\bullet}
\end{tikzcd}
\end{equation}
Explicitly, the sets of vertices and edges are described as
$$
^{\ov{n}}_{\infty}V = V\sqcup \{\infty\}\,,\qquad ^{\ov{n}}_{\infty}E = E\sqcup \bigsqcup_{v\in V}[n_v]
$$
where $[n_v] = \{1,\ldots, n_v\}$ and $t(e) = \infty, h(e) = v$ for any $e\in [n_v]$. The ideal of relations $\mR$ remains the same, and I continue using the same set of its generators $R$. Note that $\mA_{\ovninfty Q}$ is finite-dimensional if and only if $\mA_Q$ is. 
     \begin{definition}
     \label{def:framedrep2}
       A stable framed representation of $Q$ with dimension vector $\ov{d}$ and framing vector $\ov{n}$ is a stable representation of $^{\ov{n}}_{\infty}Q$ with the dimension vector $\ov{d}_{\infty} = (1,\ov{d})$  with respect to the slope stability
       $$
       \mu^{\fr}(c_{\infty},\ov{c})=\frac{c_{\infty}}{\sum_{v\in V} c_v + c_{\infty}}\,.
       $$ 
I will write \begin{equation}
      \label{eq:sumofphi}
      \phi^{\infty}_v: U_{\infty}^{\oplus n_v}\to U_{v}\end{equation} for the sum of all morphisms from $\infty$ to $v$.
      
      Note that $U_{\infty}$ can be identified with $\CC$ because $\infty$ is frozen. This maps every stable framed representation in the sense of the current definition to a representation of $Q$ with morphisms $\phi_v=\phi^{\infty}_v: \CC^{n_v}\to U_v$. It is not difficult to show  (see Proposition \cite[Proposition 3.3]{reineke} which deals with a reversed version of both definitions) that the two notions of stability introduced here and in Definition \ref{def:framedrep1} coincide, and thus one finds that
      $$
      M^{\mu^{\fr}}_{^{\ov{n}}_{\infty}Q,\ov{d}} \cong M^{\ov{n}}_{Q,\ov{d}}\,.
      $$
   There is a canonical universal representation on $M^{\mu^{\fr}}_{^{\ov{n}}_{\infty}Q,\ov{d}}$ with $\UU_{\infty}\cong \mO$ which can be identified with the one on $M^{\ov{n}}_{Q,\ov{d}}$ under the above isomorphism. Hence, I will not distinguish between the two definitions and will use only $M^{\ov{n}}_{Q,\ov{d}}$ to denote the resulting moduli scheme.
     \end{definition}
The next lemma follows immediately from the above comparison. 
\begin{lemma}
\label{lem:framedVir}
    The framed Virasoro constraints for $M^{\ov{n}}_{Q,\ov{d}}$ defined in \eqref{eq:framedVC} are equivalent to the $\infty$-normalized Virasoro constraints in \eqref{eq:SkVirasoro} under the identification in Definition \ref{def:framedrep2}.
\end{lemma}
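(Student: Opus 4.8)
The plan is to transport everything through the identification $M^{\ov{n}}_{Q,\ov{d}}\cong M^{\mu^{\fr}}_{\Qfr,(1,\ov{d})}$ of Definition \ref{def:framedrep2}, under which the universal representations agree and $\UU_{\infty}\cong\mO$. Both sides of the claimed equivalence are integrals of realized descendents against the same (virtual) fundamental class, so it suffices to compare the two families of operators after applying the realization map. The realization $\xi_{\ov{\UU}}\colon\TT^{\Qfr}_{(1,\ov{d})}\to H^{\bullet}(M^{\ov{n}}_{Q,\ov{d}})$ sends $\tau_i(\infty)\mapsto\ch_i(\UU_{\infty})=\ch_i(\mO)=\delta_{i,0}$, so it factors as $\xi_{\ov{\UU}}=\xi\circ\varpi$, where $\varpi\colon\TT^{\Qfr}_{(1,\ov{d})}\to\TT^{Q}_{\ov{d}}$ is the surjective algebra map fixing $\tau_i(v)$ for $v\in V$ and sending each generator $\tau_i(\infty)$, $i>0$, to $\delta_{i,0}=0$. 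The equivalence then reduces to the operator identity $\varpi\circ\bL_k=\bL^{\ov{n}}_k\circ\varpi$ on $\TT^{\Qfr}_{(1,\ov{d})}$, together with the vanishing of the $\bS^{\infty}_k$ contribution.

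The heart of the matter is the $\bT$-operator, and the key input is the shape of the Todd matrix $\td(\Qfr)=\pi_{V}-A^{\Qfr}+S^{\Qfr}$ from Definition \ref{def:eulerform}. Because each of the $n_v$ new arrows runs $\infty\to v$ and no relation touches $\infty$, the matrix $\td(\Qfr)$ restricts to $\td(Q)$ on the $V\times V$ block, has $\infty$-row equal to $(-n_v)_{v\in V}$, and has vanishing $\infty$-column. Substituting into $\Delta_*\td(\Qfr)$ and expanding $\sum_{i+j=k}i!j!\,\tau_i\tau_j\big(\Delta_*\td(\Qfr)\big)$ therefore reproduces the quiver term $\sum_{i+j=k}i!j!\,\tau_i\tau_j\big(\Delta_*\td(Q)\big)$ plus a single cross-contribution $-\sum_{i+j=k}i!j!\,\tau_i(\infty)\tau_j(\ov{n})$, where $\tau_j(\ov{n})=\sum_{v}n_v\tau_j(v)$. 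Applying $\varpi$ collapses the cross-term: only $i=0$ survives, leaving $-k!\,\tau_k(\ov{n})$. Together with the rigidification constant $\delta_k$ that is present because $\infty$ is frozen (Remark \ref{rem:nopairVir}(b)), this yields $\varpi(\bT^{\Qfr}_k)=\bT^{Q}_k-k!\tau_k(\ov{n})+\delta_k=\bT^{\ov{n}}_k$, precisely the framed operator of Definition \ref{def:framedVir}. Since $\bR_k$ acts vertex by vertex and hence commutes with $\varpi$, this gives $\varpi\circ\bL_k=\bL^{\ov{n}}_k\circ\varpi$.

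It remains to dispose of $\bS^{\infty}_k$. As $\infty\in F$ is frozen and $\UU_{\infty}\cong\mO$, the chosen universal representation is $\infty$-normalized, so by Definition \ref{def:Skvirasoro} the contribution $\xi_{\ov{\UU}}\big(\bS^{\infty}_k(\tau)\big)$ vanishes for all $k\geq 0$ (this is exactly the frozen-vertex bookkeeping of Remark \ref{rem:frozenisrigid}). Consequently the $\infty$-normalized constraints \eqref{eq:SkVirasoro} for $\Qfr$ read $\int\xi_{\ov{\UU}}\big(\bL_k(\tau)\big)=0$ for all $\tau\in\TT^{\Qfr}_{(1,\ov{d})}$; rewriting this as $\int\xi\big(\bL^{\ov{n}}_k(\varpi(\tau))\big)=0$ via the previous paragraph and invoking the surjectivity of $\varpi$ shows it is equivalent to $\int\xi\big(\bL^{\ov{n}}_k(\sigma)\big)=0$ for all $\sigma\in\TT^{Q}_{\ov{d}}$, i.e. the framed constraints \eqref{eq:framedVC}. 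The two conditions therefore coincide.

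I expect the only genuinely delicate point to be the bookkeeping of the rigidification term $\delta_k$ and the vanishing of $\bS^{\infty}_k$: this is precisely the ``pair versus sheaf'' normalization emphasized in §\ref{sec:pairissheaf}, and one must check that freezing the vertex $\infty$ (equivalently, rigidifying at it) supplies exactly the $+\delta_k$ needed to match $\bT^{\ov{n}}_k$ and nothing more. By contrast, the computation of $\td(\Qfr)$ and the collapse of the cross-term under $\varpi$ are routine once the conventions are fixed, and the surjectivity of $\varpi$ settles both directions of the equivalence at once, so no separate argument is needed to descend from $\TT^{\Qfr}_{(1,\ov{d})}$-descendents to $\TT^{Q}_{\ov{d}}$-descendents.
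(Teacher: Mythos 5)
Your route is the paper's own route: your $\varpi$ is the paper's morphism $\zeta$, the identities $\xi_{\ov{\UU}}=\xi\circ\varpi$ and $\bR_k\circ\varpi=\varpi\circ\bR_k$ are the same two observations, and your block computation of $\td(\Qfr)$ with the collapse of the cross term to $-k!\,\tau_k(\ov{n})$ just makes explicit what the paper compresses into the single line ``observing that $\zeta(\bT_k)=\bT^{\ov{n}}_k$''; that part is correct.

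The gap is precisely at the point you flagged as delicate and then dismissed by quoting Definition \ref{def:Skvirasoro}. The claim that $\xi_{\ov{\UU}}\big(\bS^{\infty}_k(\tau)\big)=0$ for \emph{all} $k\geq 0$ is false at $k=0$: since $\bR_{-1}$ is a derivation with $\bR_{-1}\big(\tau_{k+1}(\infty)\big)=\tau_k(\infty)$ and $d_\infty=1$, one has
\[
\bS^{\infty}_k(\tau)=-(k+1)!\,\Big(\tau_k(\infty)\,\tau+\tau_{k+1}(\infty)\,\bR_{-1}(\tau)\Big)\,,
\]
and realizing with $\UU_\infty\cong\mO$ kills $\tau_{k+1}(\infty)$ but sends $\tau_0(\infty)\mapsto 1$, so $\xi_{\ov{\UU}}\big(\bS^{\infty}_k(\tau)\big)=-\delta_k\,\xi_{\ov{\UU}}(\tau)$. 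This is exactly the value the paper itself uses in the proof of Proposition \ref{cor:PMimpliesadPM} (iv); the sentence you cite from Definition \ref{def:Skvirasoro} contradicts it and cannot be taken as a black box. With the correct value, the $-\delta_0$ from $\bS^{\infty}_0$ cancels the rigidification constant $+\delta_0$ sitting inside $\bT^{\Qfr}_0$ (this cancellation is the advertised content of §\ref{sec:pairissheaf}), and your chain of equalities then yields that the $\infty$-normalized constraints are equivalent to $\int\xi\big((\bR_k+\bT^Q_k-k!\,\tau_k(\ov{n}))\sigma\big)=0$, i.e.\ to the framed operator \emph{without} the extra $+\delta_k$ appearing in Definition \ref{def:framedVir}. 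In your write-up the two $k=0$ discrepancies cancel --- you drop the nonzero $\bS^{\infty}_0$ contribution and you keep the $+\delta_0$ in the target --- so you land on the stated formula, but each step is individually off by $\delta_k$. A one-line example settles which normalization is tenable: for $Q$ a single vertex, $\ov{n}=(2)$, $\ov{d}=(1)$, one has $M^{\ov{n}}_{Q,\ov{d}}=\PP^1$ with $\UU=\mO_{\PP^1}(1)$, so at $k=0$, $\tau=\tau_1(v)$, the realized operator without $\delta_0$ is $\bR_0+\bT^Q_0-\tau_0(\ov{n})=\bR_0+1-2$ and gives $\int_{\PP^1}(H-H)=0$ ($H$ the hyperplane class), whereas adding $+\delta_0$ gives $\int_{\PP^1}H=1\neq 0$. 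So the equivalence holds only for the $\delta_k$-free framed operators (the normalization stated in §\ref{sec:whatare}); a correct proof must compute the realization of $\bS^{\infty}_k$ honestly and track where the resulting $-\delta_k$ goes, rather than assert that it vanishes.
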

\begin{proof}
Consider the morphism of unital algebras $\zeta: \DD^{\ovninfty{Q}}_{(1,\ov{d})}\to \DD^Q_{\ov{d}}$ defined by $\zeta\big(\tau_k(\infty)\big) = 0$ for $k>0$  and acting by identity on the rest of the generators. Then $\xi\circ\zeta = \xi$, and $\bR_k\circ\zeta = \zeta\circ \bR_k$ for all $k\geq 0$. This implies for any $k\geq 0$ and $\tau\in\DD^{\ovninfty{Q}}_{(1,\ov{d})}$ that 
$$
\xi\big((\bL_k+\bS^\infty_k)(\tau)\big) =\xi\big((\bR_k+\zeta(\bT_k) + \delta_k)\zeta(\tau)\big)\,.
$$
Observing that $\zeta(T_k) = T^{\ov{n}}_k$ completes the proof.
\end{proof}
Lastly, I discuss how to express all semistable representations of a quiver with frozen vertices in terms of a quiver framed at $\infty$. Therefore, when I do wall-crossing in §\ref{sec:reductiontop}, I will only consider quivers $Q$ with one or no frozen vertices.
\begin{definition}
\label{def:onefrozen}
    Let $(Q,F,\mR)$ be such that $F\neq \emptyset$, then define the subquiver $Q^m$ of $Q$ consisting of the vertices in $V\backslash F$ and the edges between them. If a dimension vector $\ov{d}$ of $Q$ is given, I will always write $\ov{d}^m$ for its restriction to $Q ^m$. Let us fix the dimension $d_v$ for all $v\in F$ and construct a new quiver $^F_{\infty}Q$ of the form \eqref{eq:framedflag} by adding a single vertex $\infty$ to $Q^m$. For each edge $e$ in $Q$ with $t(e)\in F$, add $d_{t(e)}$-many edges going from $\infty$ to $h(e)$. Note that all relations are contained in $Q^m$ by the 
    assumption on $\mR$ in Definition \ref{def:homorela}.
    
Consider the following quiver $Q$:
    $$
\begin{tikzcd}
&\overset{d_{(l-1)''}}{\diamond}\arrow[d]&&[-30pt]\overset{d_{(l-3)'}}{\diamond}\arrow[dr]&[-30pt]\overset{d_{(l-3)''}}{\diamond}\arrow[d]&\cdots&\arrow[d]\overset{d_{1'}}{\diamond}&\\
 \overset{d_{(l-1)'}}{\diamond}\arrow[r]&\underset{l-1}{\bullet}\arrow[r]&\underset{l-2}{\bullet}\arrow[rr]&&\underset{l-3}{\bullet}\arrow[r]&\cdots\arrow[r]&\underset{1}{\bullet}
\end{tikzcd}
$$
where I used dimensions to label the frozen vertices instead. Then the associated quiver $^F_\infty Q$ takes the form
$$
\begin{tikzcd}[column sep = large]
&&\overset{\infty}{\diamond}\arrow[dll, bend right = 30, "\times \big(d_{(l-1)'}\\ \ +d_{(l-1)''}\big)"']\arrow[d,  "\begin{subarray}{c} \times \big(d_{(l-3)'}\\ \ +d_{(l-3)''}\big)\end{subarray}"']\arrow[drr, bend left = 30, "\times d_{1'}"]&&\\
\overset{l-1}{\bullet}\arrow[r]&\overset{l-2}{\bullet}\arrow[r]&\overset{l-3}{\bullet}\arrow[r]&\cdots\arrow[r]&\overset{1}{\bullet}
\end{tikzcd}\,.
$$
Choose a stability parameter vector $\ov{\mu}\in \RR^Q$, then it defines a stability condition for the representations of $^F_{\infty}Q$ in terms of $^F_\infty\ov{\mu}\in \RR^{^F_{\infty}Q}$. The values of $^F_\infty\ov{\mu}$ are identical to $\ov{\mu}$ on the vertices of $Q^m$ and 
$$
\mu_{\infty} = \sum_{v\in F}d_v\mu_v\,.
$$
Continuing to use the notation from \eqref{eq:sumofphi}, I introduce for fixed $(d_v)_{v\in F}$ a map taking representations of $^F_{\infty}Q$ with the dimension vector $$(d_{\infty}, \ov{d}^m)\quad\textnormal{where}\quad d_{\infty} = \sum_{v\in F}d_v$$
to representations of $Q$ with the dimension vector $\ov{d}$. For each $v$, it acts by first constructing $\phi^{\infty}_v:\CC^{\oplus n_v}=U^{\oplus n_v}_{\infty}\to U_v$ where
$$n_v=\sum_{\begin{subarray}{c} e\in E:\\ t(e)\in F, h(e) =v\end{subarray}}d_{t(e)}\,.$$
I then partition $\CC^{\oplus n_v}$ into $\CC^{d_{t(e)}}$ for all $e$ appearing in the above sum in a fixed order.

This construction also identifies the two respective notions of slope stability and goes through for families so I described an isomorphism
\begin{equation}
\label{eq:isfroztofram}
M^{\mu}_{Q,\ov{d}}\cong M^{^F_{\infty}\mu}_{^F_{\infty}Q,(1,\ov{d}^m)}\,.
\end{equation}
Finally, observe that the virtual tangent bundles are identified under this isomorphism and the pullback along the above map of representations induces a morphism between the descendent algebras of $Q$ and $^F_{\infty}Q$ that commutes with the Virasoro operators. Therefore, Virasoro constraints on either side of the isomorphism \eqref{eq:isfroztofram} are equivalent to each other.
\end{definition}
\section{Vertex algebras from quiver moduli}
I recall here briefly the definition of vertex algebras and the only example of them that I will use -- the lattice vertex algebras. Then I set some notation for working with Joyce's geometric construction of lattice vertex algebras while recalling the main theorem. Most of this section summarizes parts of \cite[§3, §4]{BML} in the simpler case of quivers. The two main departures from loc. cit. are:
\begin{enumerate}
    \item I distinguish between the rigidified and non-rigidified vertex algebras for quivers with frozen vertices, while we used only the analog of the former before. The non-rigidified version is related to the special form of the $\bT_k$ operators in Definition \ref{def:virop} and the discussion below it.
    \item In §\ref{sec:VAframing}, I construct the framing vertex algebra $V^{\fr}_{\bullet}$ which is motivated by Definition \ref{def:framedrep1}. It is meant to make the symmetrized Euler pairing of the quiver non-degenerate, a role that was previously played by the vertex algebra $V^{\pa}_{\bullet}$ from \cite[Definition 4.4]{BML}.
\end{enumerate}
\subsection{Lattice vertex algebras}
\label{sec:recallVA}
Continuing to use my grading convention which leads to $\ZZ$-graded commutative objects, I recall that a vertex algebra on a graded vector space consists of 
\begin{enumerate}
\item a \textit{vacuum vector} $\ket{0}\in V_0$, 
    \item a linear \textit{translation operator}
    $T\colon V_\bullet\to V_{\bullet+1}$,
    \item and a \textit{state-field correspondence} which is a degree 0 linear map
    $$
    Y\colon V_\bullet\longrightarrow \End(V_\bullet)\llbracket z,z^{-1} \rrbracket\,,
    $$
    denoted by 
   \begin{equation}
   \label{eq:Yaz}
   Y(a,z)\coloneqq \sum_{n\in\ZZ}{a_{(n)}}z^{-n-1}\,,
    \end{equation}
   where $a_{(n)}:V_\bullet\rightarrow V_{\bullet+\deg(a)-n-1}$ and $\deg (z) =-1$.
\end{enumerate}
The conditions needed to be satisfied by this collection of data can be found for example in \cite[§4]{Borcherds}\footnote{In this reference, one should first assume that the vector space $V$ only admits even parity.}, \cite[(1.3.3)]{Ka98}, or \cite[Definition 3.1.1]{LLVA}. Let $a,b,c\in V_{\bullet}$, then two of the conditions are 
\begin{align}
\label{eq:skew}
a_{(n)}b &= \sum_{i\geq 0}(-1)^{|a||b|+i+n+1}\frac{T^i}{i!} b_{(n+i)}a\,,\\
\big(a_{(m)}b\big)\raisebox{0.2 pt}{\ensuremath{_{(n)}}} c
&=\sum_{i\geq 0}(-1)^i \binom{m}{i}\Big[a_{(m-i)}\big(b_{(n+i)}c\big) -(-1)^{|a||b|+m}b_{(m+n-i)}\big(a_{(i)}c\big)\Big]\,.\label{eq:jacobi}
\end{align}
I refer to \cite{Borcherds}, \cite[Chapters 3 and 4]{Ka98}, \cite[§3]{LLVA} and \cite[§3.1]{BML} for the basic properties of vertex algebras with the last reference collecting precisely what is necessary for the present work.
\begin{example}
\label{ex:latticeVA}
 The only explicit examples of vertex algebras I will be working with here are the \textit{lattice vertex algebras} as discussed in \cite{Borcherds}, \cite[§5.4, §5.5]{Ka98}, \cite[§6.4,§6.5]{LLVA}, and \cite[§3.2]{BML}.  Starting from a free abelian group and a not necessarily symmetric pairing  $q:\Lambda\times \Lambda\to \ZZ$, define 
 $$
 \mathcal{Q}(\alpha,\beta) = q(\alpha,\beta) + q(\beta,\alpha)\quad  \textnormal{for}\quad \alpha,\beta\in \Lambda\,, \qquad \Lambda_{\CC} = \Lambda\otimes_{\ZZ}\CC\,.
 $$
 The underlying vector space of the lattice vertex algebra associated with the lattice $(\Lambda,\mQ)$ is defined by 
$$
V_{\bullet} = \CC[\Lambda]\otimes \TT_{\Lambda}\,, \quad \TT_{\Lambda} = \SSym\big[T_{\Lambda}\big]\,,\qquad T_{\Lambda} = \bigoplus_{k>0}\Lambda_{\CC}\cdot t^{-k}
$$
with the generators $x_{k} = x\cdot t^{-k}\in \TT_{\Lambda}$ and $e^\alpha\in \CC[\Lambda]$ whenever $x\in \Lambda_{\CC}$ and $\alpha\in \Lambda$. Note that $V_{\bullet}$ carries a natural algebra structure since it is a tensor product of a group algebra and a polynomial algebra. However, the multiplication will seem ad hoc from the geometric perspective I describe later on. 

 After fixing the grading 
$$
\deg \Big(e^{\alpha}\otimes x_{k_1}^1\cdots x_{k_n}^n\Big) = \sum_{i=1}^n k_i  
 + q(\alpha,\alpha)\quad \textnormal{for}\quad x^1,\ldots,x^n \in \Lambda_{\CC}\,,
$$
one endows $V_{\bullet}$ with the following vertex algebra structure:
\begin{enumerate}
    \item One sets $|0\rangle\coloneqq e^0\otimes 1\in V_0$ for the vacuum vector.
    \item The translation operator is determined as the derivation on the algebra $V_{\bullet}$ satisfying 
\begin{equation}
\label{eq:Tabstract}
T(x_{k}) = kx_{k+1}\,,\qquad T(e^{\alpha}) = e^\alpha\otimes \alpha_{1}\,. 
\end{equation}
\item Let us start by defining 
\begin{equation}
\label{eq:fields1}
 Y(x_{1},z) = \sum_{k\geq 0}{x_{k+1}\cdot} \ z^{k} +\sum_{k\geq 1}\sum_{b\in B} k \mQ(x,b)\frac{\partial}{\partial b_{k}}z^{-k-1} + \mQ(x,\beta)z^{-1}
\end{equation}
 where $B\subset \Lambda$ is a basis of the lattice and $x_{k+1}\cdot$ denotes the multiplication by $x_{k+1}$. This is how the action of $x_{(k)}$ on $e^\beta\otimes \TT_{\Lambda}$ for all $k\in \ZZ$ is defined. For example, one sees that $x_{(-k)} =  x_k\cdot \ $ for $k\geq 1$.
 
The rest of the state-field correspondence is described by 
\begin{align*}
 Y(e^{\alpha},z) = (-1)^{q(\alpha, \beta)}z^{\mQ(\alpha,\beta)}e^{\alpha} \exp\Big[-\sum_{ k<0}\frac{\alpha_{(k)}}{k}z^{-k}\Big]\exp\Big[-\sum_{k>0}\frac{\alpha_{(k)}}{k}z^{-k}\Big]\,,
 \end{align*}
  in combination with the reconstruction-like theorem
  \begin{multline}\label{Eq: reconstruction}     
Y\big(e^\alpha\otimes x^1_{k_1+1}\cdots x^n_{k_n+1},z\big)\\
= \frac{1}{k_1!k_2!\cdots k_n !}:Y(e^\alpha,z)\frac{d^{k_1}}{(dz)^{k_1}}Y(x^1_{1},z)\cdots \frac{d^{k_n}}{(dz)^{k_n}}Y(x^n_{1},z): \,.
\end{multline}
\end{enumerate}
This collection of data determines the vertex algebra structure on $V_{\bullet}$ uniquely.
\end{example}
\subsection{Conformal element}
\label{sec:conformalel}
The main novelty of \cite{BML} is to phrase Virasoro constraints that appeared for the first time in \cite{moop} in terms of a choice of a conformal element in the lattice vertex algebra constructed by Joyce. The present work is simpler, as there is no (anti-)fermionic part of the vertex algebra, and there is less freedom in choosing a conformal element. I now recall the definition of a conformal element and its form in Example \ref{ex:latticeVA}.

\begin{definition}\label{Def: conformal vertex algebra}
 A \textit{conformal element} $\omega$ of a vertex algebra $V_\bullet$ is an element of $V_4$ such that its associated operators $L_n=\omega_{(n+1)}$ satisfy
 \begin{enumerate}
     \item the Virasoro commutation relations
     \[\big[L_n,L_m\big] = (n-m)L_{n+m} +\frac{n^3-n}{12}\delta_{n+m,0}\cdot C
     \,,\]
     where $C\in \CC$ is a constant called the \textit{conformal charge} of $\omega$,
     \item $L_{-1}  = T\,,$
     \item and $L_0$ is diagonalizable. 
 \end{enumerate}
A vertex algebra $V_\bullet$ together with a conformal element $\omega$ is called a \textit{vertex operator algebra}.
\end{definition}
Suppose now that $V_\bullet$ is a lattice vertex algebra for a non-degenerate lattice $(\Lambda,\mQ)$, then there is a simple construction of a conformal element which was already observed by Borcherds \cite[§5]{Borcherds}. Fix a basis $B$ of the lattice $\Lambda$ and its dual $\hat{B}$ with respect to $\mQ$, then 
$$
\omega= \frac{1}{2}\sum_{b\in B}  \hat{b}_{(-1)}b_{(-1)}\ket{0}
$$
is a conformal element with the conformal charge $C = \rk(\Lambda)$. It is then well-known and easy enough to check that $$L_0(v) = \deg(v)\cdot v$$
for a pure degree element $v\in V_{\bullet}$.

\begin{remark} In \cite[Example 5.5b]{Ka98}, Kac mentions that in some cases there is a way of constructing alternative conformal structures relying on the boson-fermion correspondence in \cite[§5.2]{Ka98}. If one follows the rest of the present paper, the new conformal element would lead to new Virasoro operators. The operators would no longer have such a clear relation to the form of $\ch(T^\vir)$ in \eqref{eq:Tvir} so I will not be interested in the resulting constraints here.
\end{remark}

\subsection{Joyce's vertex operator algebra for quivers}
\label{sec:JoyceVA}

The primary mechanism that does most of the heavy lifting in proving Virasoro constraints is Joyce's geometric vertex algebra construction \cite{Jo17} and the wall-crossing that it allows to formulate (see \cite{GJT, Jo21}). I will briefly summarize the main ingredients in the case of quivers and set some notation necessary for working with it based on its explicit description in \cite{Jo17}.

In this section and §\ref{sec:WC}, I will need three types of quivers -- $Q$ with no frozen vertices, the framed quiver $\Qfr$ associated with it, and a further quiver defined in §\ref{sec:VAframing} with as many frozen vertices as there are vertices in $Q$. For this reason, I first specify the collection of data required to construct the vertex algebra for any quiver with any $F\subset V$, but I modify this data later on to suit the specific cases and to make the definition compatible with \cite{Jo21}.

\begin{definition}
\label{def: VOAconstruction}
\begin{enumerate}
    \item I work with the higher moduli stack $\MQ$ of perfect complexes of representations of $(Q,\mR)$ constructed by Toën-Vaquié \cite{TV07} which admits a universal object $(\ov{\mathscr{U}},\ov{\varphi})$. Introducing $F\neq\emptyset$ does not change the definition of the stack $\mM_Q$ but rather the other ingredients below. I will also need the direct sum map
$\Sigma\colon\MQ\times \MQ\to \MQ
\,,$
such that 
$$
\Sigma^\ast\ \ov{\mathscr{U}} = \ov{\crU} \boxplus \ov{\crU}\,,
$$
and an action $\rho\colon [*/\GG_m]\times\MQ\to \MQ$ determined by
$$
\rho^\ast\ \ov{\crU} = \mV\boxtimes \ov{\crU}
$$
for the universal line bundle $\mV$ on $[*/\GG_m]$.
   \item The piece of data needed to obtain a vertex algebra that interacts the most with Virasoro constraints is the complex
    \begin{equation}
    \label{eq:extcomplex}
    \Ext'_Q =
\bigoplus_{v\in V\backslash F} \crU_v^\vee\boxtimes \crU_v\stackrel{\varphi_E}{\longrightarrow} \bigoplus_{e\in E} \crU^\vee_{t(e)}\boxtimes \crU_{h(e)}\stackrel{\varsigma_R}{\longrightarrow} \bigoplus_{r\in R} \crU^\vee_{t(r)}\boxtimes \crU_{h(r)}
    \end{equation}
   on $\mM_Q\times \mM_Q$ in degrees $[-1,1]$.  Note that in some cases as explained in Remark \ref{rem:wrongobstruction}, this restricts at each $\CC$-point $\big([\ov{U}^\bullet], [\ov{W}^\bullet]\big)\in \MQ\times \MQ$ to $\Ext^\bullet\big(\ov{U}^\bullet, \ov{W^\bullet}\big)$. Denoting by  $\sigma\colon \MQ\times \MQ\to \MQ\times \MQ$ the map swapping the two factors, one constructs the symmetrized complex
    $$
    \Theta'_Q = (\Ext'_Q)^\vee\oplus \sigma^\ast\Ext'_Q
    $$
   satisfying
    $
   \sigma^* \Theta'_Q\cong (\Theta'_Q)^\vee.
    $
    \item For any $\ov{d}\in K^0(Q)\simeq \pi_0(\mathcal{M}_Q)$, I denote the corresponding connected component by $\mM_{\ov{d}}\subset\MQ$. Any restriction of an object living on $\MQ$ to $\mM_{\ov{d}}$ will be labelled by adjoining the subscript $(-)_{\ov{d}}$. This allows one to write
    $$
   \chi'_{\sym}(\ov{c},\ov{d})\coloneqq \rk\big(\Theta'_{Q,\ov{c},\ov{d}}\big) = \chi(\ov{c},\ov{d}) + \chi(\ov{d},\ov{c})
    $$
    for $\chi\colon K^0(Q)\times K^0(Q)\to \ZZ$ the Euler form \eqref{eq:eulerform}.
 \end{enumerate}
 When $F=\emptyset$, I will omit writing the dash $(-)'$ leading to just $\Ext_Q$, $\Theta_Q, \chi_{\sym}$ instead.
\end{definition}
\begin{remark}
\label{rem:rigidvsnonrigid}
   Note that in the presence of $F\neq \emptyset$, the above definition will eventually lead to an analog of a vertex algebra of pairs as in \cite[§8.2.1]{Jo21}. I will need to introduce corrections $\Ext_Q, \Theta_Q$ of $\Ext'_Q, \Theta'_Q$ to make up for the endomorphisms at frozen vertices. This will in the end recover the term $\delta_k$ from $\bT_k$ in Definition \ref{def:virop}, and it is necessary for making a connection with Joyce's \cite[(8.28)]{Jo21} which describes the $\Ext$-complex for the stack of pairs of sheaves. This is natural because without including these corrections, one would be describing the obstruction theory of the rigidified stack. But this is where the Lie algebra, not the vertex algebra should be constructed (see §\ref{sec:geominterpLie}). 
  Still, it turns out that finding conformal elements is much easier in the case when the rigidified $\Ext'_Q$ complexes are used. I will therefore work with two vertex algebras -- $V_{Q,\bullet}$ compatible with the pair vertex algebra of Joyce and thus used to formulate wall-crossing, and $V'_{Q,\bullet}$ defined in terms of $\Ext'$ and meant to accommodate a conformal element $\omega$. I will also show that restricted to the current applications,  the Lie brackets induced by both of the vertex algebras coincide.
\end{remark}
Let us begin by discussing the vertex algebra $V'_{Q,\bullet}$ the role of which was explained in the above remark. I will focus on the modifications necessary to remove the dash later.

Joyce's vertex algebra lives on the homology of $\mM_Q$ so I first give an overview of the basic definitions:
\begin{enumerate}
\item The (co)homology is defined by first acting with the topological realization functor $(-)^{\textnormal{t}}$ from \cite[§3.4]{Blanc} which gives a topological space $\MQ^{\tet}$ and only then taking (co)homology. In other words, $H^\bullet(\MQ) = H^\bullet(\MQ^{\textnormal{t}})$.
\item For a perfect complex $\mE$ on $\MQ$, one can construct its associated K-theory class using the classifying map $\mE: \MQ\to \Perf_{\CC}$ of the same name. Note that $(\Perf_{\CC})^{\tet} = BU\times \ZZ$ by \cite[§4.2]{Blanc}. So, acting with $(-)^{\tet}$ leads to $\mE^{\textnormal{t}}:\MQ^{\textnormal{t}}\to BU\times\ZZ$, and one can pull-back the universal K-theory class $\mathfrak{U}$ from $BU\times \ZZ$. I will use the notation $\mE^{\tet}=(\mE^{\textnormal{t}})^*(\mathfrak{U})$.
\item Lastly, Chern classes of $\mE$ are defined by $c(\mE) = c(\mE^{\textnormal{t}})$, and so are Chern characters.
\end{enumerate}

The underlying vector space of $V'_{Q,\bullet}$ is obtained by shifting the grading on homology of $\MQ$:
$$
V'_{Q,\bullet}= \widehat{H}'_{\bullet}(\mM_Q) =   \bigoplus_{\ov{d}\in K^0(Q)}\widehat{H}'_{\bullet}(\mM_{\ov{d}})\,,\qquad \widehat{H}'_{\bullet}(\mM_{\ov{d}}) =H_{\bullet -2\chi(\ov{d},\ov{d})}(\mM_{\ov{d}})\,.
$$
Joyce \cite[§5.2.2, §5.3.2]{Jo17}, \cite[§6.2]{Jo21} gave its precise description so I will summarize the main points.
I begin with an assumption that generalizes the homogeneity assumption in \cite[Definition 6.6]{Jo21} (note that it is different from the one in Definition \ref{def:homorela}).
\begin{assumption}
\label{ass:homotopyref}
    Assume that the quiver with relations $(Q,\mR)$ admits a (homogeneous) generating set $R$ with a set of edges $E_R\subset E$  satisfying the following: for a fixed $r\in R$ expressed as the linear combination of paths 
    $
    r = \sum a_{\gamma}\gamma
    $, there exists a \textit{contracting number} $c_r>0$ such that the number of edges from $E_R$ contained in each $\gamma$ is $c_r$. I will call such $E_R$ a \textit{contracting set}. In the presence of cycles, a fixed edge can appear with higher multiplicity. In this case, each appearance contributes to the value of $c_r$.
\end{assumption}
\begin{example}
 A relation $r$ is homogeneous in terms of \cite[Definition 6.2]{Jo21} if it is a non-zero linear combination of paths of the same length. If $R$ consists only of such $r$, then one can choose $E_R=E$. Consider the other extreme when there is only a single relation $R=\{r\}$, then one can choose the first edge along each path $\gamma$ of $r$ independent of its length. A mix between the two served as a motivation for introducing the above definition which is meant to remove the restrictive condition on the length of paths $\gamma$ for $r$ to be homogeneous.     
\end{example}

Consider now the inclusion 
$$
 \prod_{v\in V} \Perf_{\CC}\hookrightarrow \MQ
$$
obtained by mapping the collection of complexes $(U^\bullet_v)_{v\in V}$ to the representation complex with zero maps for all edges between them. The point of Assumption \ref{ass:homotopyref} is that one can construct an $\BA^1$-homotopy inverse of this map as
$$
(\ov{U}^\bullet, \ov{f})\mapsto (\ov{U}^\bullet, 0)\,.
$$
One can construct an $\BA^1$-homotopy between the above two representations by first rescaling 
$$
\BA^1\ni\lambda \mapsto \lambda f_e \quad \text{for}\quad e\in R\,,
$$
and then doing so for the rest of the edges. Each relation $r\in R$ is preserved by the scaling in the first step because it is proportional to $\lambda^{c_r}$ when applied to $\lambda f_e$.

Moreover, the pullback of the universal complexes $\crU_v$ returns universal complexes on each factor $\Perf_{\CC}$ denoted simply by $\crU$ when working with a fixed copy of $\Perf_{\CC}$. 
This gives an identification
\begin{equation}
\label{eq:xi}
\xi: \TT^Q_{\ov{d}} \stackrel{\sim}{\longrightarrow} H^\bullet(\mM_{\ov{d}}) \,, \qquad \tau_i(v)\mapsto \ch_i(\crU_v)
\end{equation}
whenever $$\ov{d}\in \Lambda = K^0(Q)\,.$$ To see that this is an isomorphism, note that the connected components of $\prod_{v\in V}\Perf_{\CC}$ are labelled by dimension vectors $\ov{d}\in \Lambda$ and $$H^{\bullet}(\Perf_{\CC}) \cong H^{\bullet}(BU\times \ZZ)\cong \CC[\ZZ]\otimes \SSym\llbracket\ch_i(\mathfrak{U}), i>0\rrbracket\,.$$ 

Finally, I construct the isomorphisms
\begin{equation}
\label{eq:xidagger}
\xi^\dagger:\widehat{H}'_{\bullet}(\mM_{\ov{d}})\stackrel{\sim}{\longrightarrow} e^{\ov{d}}\otimes \TT_{\Lambda}\,,\qquad \ov{d}\in \Lambda
\end{equation}
which is graded if one uses the shifted grading from Example \ref{ex:latticeVA} for the right-hand side.

To do so, I fix the pairing 
\begin{equation}
\label{eq:pairingTT}
\langle -,-\rangle : T^Q_{\ov{d}}\times T_{\Lambda}\longrightarrow \CC\,,\qquad \langle \tau_k(v),x_{-j}\rangle = \frac{x_v}{(i-1)!}\delta_{k,j}
\end{equation}
where $x=\sum_{v\in V}x_v v$. By §\ref{sec:gradedalg}, this induces a cap product $\cap: \TT^Q_{\ov{d}}\times \TT_{\Lambda}\to \TT_{\Lambda}$ that can be written as 
\begin{equation}
\label{eq:tauder}
\tau_k(v)\cap =\frac{1}{(k-1)!}\frac{\partial}{\partial v_{-k}}\,. 
\end{equation}
 The isomorphism $\xi^{\dagger}$ is then defined as the dual of $\xi$.  Using $\cap$ to also denote the usual topological cap product between cohomology and homology, this means that the following diagram commutes
\begin{equation}
\label{Eq: homologyisomorphism}
\begin{tikzcd}
\TT^Q_{\ov{d}}\times\TT_{\Lambda} \arrow[d,"{\xi\times \xi^{-\dagger}}"', "\sim"]\arrow[r,"\cap"]&\TT_{\Lambda}\arrow[d,"{\xi^{-\dagger}}"]\\
H^\bullet(\mM_{\ov{d}})\times H_\bullet(\mM_{\ov{d}})\arrow[r,"\cap"]&H_\bullet(\mM_{\ov{d}})
\end{tikzcd}\,.
\end{equation}
\begin{remark}
\label{rem:forgetR}
    An even simpler way to circumvent restrictions on the set $R$ is to forget it altogether and work with the embedding $\iota_R$ of $\mM_Q$ into the stack of $Q$ without relations. This way, one still obtains the morphism \eqref{eq:xi}, but it may no longer be invertible. Still, it is sufficient for defining $\xi^{\dagger}$ which can be identified with $(\iota_R)_*$. One can modify the vertex algebra for $Q$ without relations by adding to the K-theory class $\Ext'_Q$ the term
$$
\sum_{r\in R} \crU_{t(r)}^\vee\boxtimes \crU_{h(r)}
$$
by hand. This way, $\xi^{\dagger}$ becomes a morphism of vertex algebras which will induce a morphism of Lie algebras in §\ref{sec:Lie}. Using the methods following this remark, one can then prove Virasoro constraints independent of what $R$ looks like. The main drawback of this approach is that one is not studying the classes $[M^{\mu}_{Q,\ov{d}}]$ in the homology of $\mM_Q$ but rather its ambient stack that forgets $R\subset\mR$. So, unless Assumption \ref{ass:homotopyref} is satisfied, one might lose some information when pushing forward along $\iota_R$. Doing so, however, is compatible with the realizations of the descendent classes. For this reason, I will forget about Assumption \ref{ass:homotopyref} from now on.
\end{remark}
The next result was originally only proved for quivers with $F=\emptyset$ but the extension to any $F$ is straightforward.
\begin{theorem}[{\cite[Theorem 3.12, Theorem 5.19]{Jo17},}]\label{thm:geometricvaconstruction}
Suppose that the Assumption \ref{ass:homotopyref} holds, then the vertex algebra structure on $V'_{Q,\bullet}$ defined by 
\begin{enumerate}
    \item using the inclusion $0\colon*\to \MQ$ of a point corresponding to the zero object to construct
     $$
    \ket{0}=0_*(*)\in H_0(\MQ)\,,
    $$
    \item taking $t\in H_2\big([*/\GG_m]\big)$ to be the dual of $c_1(\mV)\in H^2\big([*/\GG_m]\big)$ and setting 
    \begin{equation}
    \label{eq:Translation}
    T(a) = \rho_\ast\big(t\boxtimes a\big)\quad \textnormal{for}\quad a\in V'_{Q,\bullet}\,,
    \end{equation}
    \item introducing the state-field correspondence by the formula
    \begin{equation}\label{eq: joycefields}
    Y(a,z)b = (-1)^{\chi(\ov{c},\ov{d})}z^{\chi'_\sym(\ov{c},\ov{d})}\Sigma_\ast\Big[(e^{zT}\boxtimes \textnormal{id})\big(c_{z^{-1}}(\Theta'_Q)\cap (a\boxtimes b)\big)\Big]
    \end{equation}
for any $a\in \widehat{H}'_\bullet(\mathcal{M}_{\ov{c}})$ and $b\in \widehat{H}'_\bullet(\mathcal{M}_{\ov{d}})$.
\end{enumerate}
is the lattice vertex algebra from Example \ref{ex:latticeVA} for $\Lambda = K^0(Q), \mathcal{Q} =\chi'_{\sym}$ under the isomorphism $\xi^\dagger:\widehat{H}'_{\bullet}(\MQ)\xrightarrow{\sim}\CC[\Lambda]\otimes \TT_{\Lambda}$ defined in \eqref{eq:xidagger}. If $\chi'_{\sym}$ is non-degenerate, one can construct a dual basis $\hat{V}=\{\hat{v}\colon v\in V\}$ of $V$ in $\Lambda_{\CC}$ such that 
$$
\chi'_{\sym}(\hat{v},w) =\delta_{v,w} \quad  \textnormal{for}\quad v,w\in V\,.
$$
Then $V'_{Q,\bullet}$ can be promoted to a vertex operator algebra with the conformal element
\begin{equation}
\label{eq:omega}
\omega = \sum_{v\in V}\hat{v}_{(-1)}v_{(-1)}\ket{0}
\end{equation}
of conformal weight $C=|V|$.
\end{theorem}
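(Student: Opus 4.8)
The plan is to treat the two assertions separately: first that $V'_{Q,\bullet}$ is the lattice vertex algebra attached to $(\Lambda,\chi'_{\sym})$, and then that a non-degenerate $\chi'_{\sym}$ promotes it to a vertex operator algebra via the Borcherds element. For the first assertion I would reduce everything to Joyce's general theorem \cite{Jo17} applied to $\mM_Q$, the task being to check that each piece of structure matches Example \ref{ex:latticeVA} under the isomorphism $\xi^\dagger$ of \eqref{eq:xidagger}. That isomorphism is itself the starting point: using the $\BA^1$-homotopy equivalence between $\mM_{\ov{d}}$ and $\prod_v\Perf_\CC$ (the zero-maps inclusion, with homotopy inverse supplied by Assumption \ref{ass:homotopyref}), together with $H^\bullet(\Perf_\CC)\cong \CC[\ZZ]\otimes\SSym\llbracket\ch_i(\mathfrak U)\rrbracket$, one identifies $H_\bullet(\mM_{\ov{d}})$ with the graded dual of $\TT^Q_{\ov{d}}$, i.e. with $e^{\ov{d}}\otimes\TT_\Lambda$ after the degree shift by $2\chi(\ov{d},\ov{d})$.

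With $\xi^\dagger$ in hand, the vacuum $0_*(*)$ goes to $e^0\otimes 1$ by definition, so this case is immediate. The translation operator $T=\rho_*(t\boxtimes-)$ is computed from $\rho^*\crU_v=\mV\boxtimes\crU_v$: pushing forward against $t\in H_2([*/\GG_m])$ extracts the part linear in $c_1(\mV)$ of $\ch(\mV\boxtimes\crU_v)$, which reproduces exactly $T(x_k)=kx_{k+1}$ and $T(e^\alpha)=e^\alpha\otimes\alpha_1$ from \eqref{eq:Tabstract}. The substantive step is the state-field correspondence. Here I would unwind Joyce's formula \eqref{eq: joycefields}: the prefactor $(-1)^{\chi(\ov{c},\ov{d})}z^{\chi'_{\sym}(\ov{c},\ov{d})}$ matches the cocycle sign and the power of $z$ in the lattice fields by construction, while $\Sigma_*$ — which sends $K^0$-classes to their sum and, via Künneth, realizes the product on $\TT_\Lambda$ — together with the cap against $c_{z^{-1}}(\Theta'_Q)$ produces the Heisenberg creation/annihilation modes. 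Concretely, the rank and first Chern data of $\Theta'_Q=(\Ext'_Q)^\vee\oplus\sigma^*\Ext'_Q$ encode precisely the pairing $\chi'_{\sym}$, so expanding $c_{z^{-1}}(\Theta'_Q)$ and applying $e^{zT}$ recovers \eqref{eq:fields1} and the operator $Y(e^\alpha,z)$ of Example \ref{ex:latticeVA}; this is the content of \cite[Theorem 5.19]{Jo17} specialized to quivers.

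Since introducing frozen vertices $F\neq\emptyset$ only deletes the endomorphism summands $\crU_v^\vee\boxtimes\crU_v$ for $v\in F$ from $\Ext'_Q$, it merely changes the lattice pairing to the one recorded in Definition \ref{def: VOAconstruction}, and every step above applies verbatim; this is the promised ``straightforward extension'' to arbitrary $F$. For the second assertion I would invoke the Borcherds construction recalled in §\ref{sec:conformalel}: non-degeneracy of $\chi'_{\sym}$ guarantees a dual basis $\hat V=\{\hat v\}$ with $\chi'_{\sym}(\hat v,w)=\delta_{v,w}$, and feeding it into that construction yields the element $\omega=\sum_{v\in V}\hat v_{(-1)}v_{(-1)}\ket 0$, whose operators $L_n=\omega_{(n+1)}$ satisfy the Virasoro relations with $L_{-1}=T$ and diagonalizable $L_0$ acting by degree; the conformal charge equals the rank of the lattice, namely $|V|$.

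The main obstacle throughout is the state-field verification: one must confirm that Joyce's geometric field, built from a pushforward along $\Sigma$ and a cap with Chern classes of $\Theta'_Q$, reproduces the purely algebraic modes of the lattice vertex algebra, and in particular that the bosonic annihilation part emerges correctly from $c_{z^{-1}}(\Theta'_Q)$ and not merely the sign and the leading power of $z$. Once the fields are matched on the generators $e^\alpha$ and $x_1$, the reconstruction identity \eqref{Eq: reconstruction} propagates the agreement to all of $V'_{Q,\bullet}$, completing the identification.
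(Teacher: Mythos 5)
Your proposal follows essentially the same route as the paper: the paper likewise treats the lattice vertex algebra identification as Joyce's \cite[Theorem 3.12, Theorem 5.19]{Jo17} applied through $\xi^\dagger$, notes only that the extension to $F\neq\emptyset$ (deleting the frozen endomorphism summands, hence modifying $\chi'_{\sym}$) is straightforward, and obtains the conformal element from Borcherds' construction for non-degenerate lattices as recalled in §\ref{sec:conformalel}. Your additional sketches of the vacuum, translation, and field verifications are consistent with how Joyce's proof proceeds, so there is no gap to flag.
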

When $F=\emptyset$, I again omit the dash by writing $V_{Q,\bullet}$ or when $Q$ is understood just $V_{\bullet}$ instead of $V'_{Q,\bullet}$. When frozen vertices are present and $\omega$ exists, I will call it the \textit{rigidified conformal element} for the reasons explained in Remark \ref{rem:rigidvsnonrigid}.

For later purposes, I recall that combining the field equation in \eqref{eq:fields1} with \eqref{eq:tauder} leads to 
\begin{equation}
\label{eq:fieldastau}
 Y(x_{1},z) = \sum_{k\geq 0}x_{k+1}\cdot 
 z^{k} +\sum_{k\geq 1}\sum_{v\in V} k !\chi'_{\sym}(x,v) \tau_k(v) z^{-k-1} + \chi'_{\sym}(x,\beta)z^{-1}
\end{equation}
where I omitted writing the cap product after $\tau_k(v)$ and will continue to do so.
 \subsection{Framing vertex algebra}
 \label{sec:VAframing}
 Let us from now on assume that $Q$ has no frozen vertices as I will add these by hand. Thus the quiver is given by 
  $$(Q,\mR)\,,\quad Q = (V,E)\,,\quad F=\emptyset \,.$$
 The non-degeneracy of the pairing $\chi_{\sym}$ is necessary to construct the conformal element. However, it can be checked immediately that it does not hold even in the simple example of Kronecker quiver with two arrows
$$
Q=\begin{tikzcd}
\overset{1}{\bullet}\arrow[r, shift left = 0.2em]\arrow[r, shift right = 0.2em]&\overset{2}{\bullet}
\end{tikzcd}\,.
$$
 Following the idea presented by \cite[§5]{Borcherds}, we have resolved a similar issue in \cite{BML} by embedding $\Lambda$ into a larger lattice $\Lambda^{\pa}$ this time with non-degenerate symmetric pairing $\chi^{\pa}_{\sym}$. The restriction of the resulting Virasoro operators to $V_{\bullet}$ still denoted by $L_k$ is independent of the choice of $(\Lambda^{\pa}, \chi^{\pa}_{\sym})$ so we picked one with a suitable geometric interpretation. In the present work, a natural choice is motivated by Definition \ref{def:framedrep1}.
 
 Before I introduce the ostensibly non-degenerate pairing, I will go over the non-rigidified version of $V'_{\Qfr,\bullet}$. 
 \begin{definition}
 
     Fix a dimension vector $\ov{n}$, and consider the quiver $\Qfr$ from \eqref{eq:framedflag}. For it, I define
     \begin{enumerate}
    \item  The non-rigidified $\Ext$-complex
     \begin{equation}
     \label{eq:ext}
     \Ext_{\Qfr} = \bigoplus_{v\in \ovninfty V} \crU_v^\vee\boxtimes \crU_v\longrightarrow \bigoplus_{e\in \ovninfty E} \crU^\vee_{t(e)}\boxtimes \crU_{h(e)}\longrightarrow \bigoplus_{r\in R} \crU^\vee_{t(r)}\boxtimes \crU_{h(r)}
     \end{equation}
     which is just the cone of 
     $$
     \crU^\vee_{\infty}\boxtimes \crU_{\infty}\longrightarrow \Ext'_{\Qfr}\,.
    $$
    Its symmetrization
    $$
    \Theta_{\Qfr} = (\Ext_{\Qfr})^\vee \oplus \sigma^* \Ext_{\Qfr}
    $$
    replaces $\Theta'_{\Qfr}$.
    \item Recall that one writes the dimension vector of $\Qfr$ as $$(d_{\infty}, \ov{d})=\ov{d}_{\infty}\in \ovninfty \Lambda = K^0(\Qfr) \,.$$ Denoting by $\ovninfty\chi' , \ovninfty\chi'_{\sym}$ the pairings from definitions \ref{def:eulerform} and \ref{def: VOAconstruction}, I define their corrections
    \begin{align*}
    \label{eq:comparingforms}
    \ovninfty \chi\big(\ov{c}_{\infty},\ov{d}_{\infty}\big) &= \rk\big(\Ext_{\Qfr,\ov{c}_{\infty},\ov{d}_{\infty}}\big) = \ovninfty \chi'\big(\ov{c}_{\infty},\ov{d}_{\infty}\big) + c_{\infty}\cdot d_{\infty}\,,\\
    \ovninfty \chi_{\sym}\big(\ov{c}_{\infty},\ov{d}_{\infty}\big)& = \rk\big(\Theta_{\Qfr,\ov{c}_{\infty}, \ov{d}_{\infty}}\big) = \ovninfty\chi'_{\sym}\big(\ov{c}_{\infty},\ov{d}_{\infty}\big) + 2c_{\infty}\cdot d_{\infty}\,.
    \numberthis
    \end{align*}
     \end{enumerate}
Replacing the original dashed ingredients in Theorem \ref{thm:geometricvaconstruction} with the ones from the current definition results in a new vertex algebra with the underlying graded vector space 
$$
\ovninfty V_{\bullet} =\widehat{H}_{\bullet}(\mM_{\Qfr}) = \bigoplus_{\ov{d}_{\infty}\in \ovninfty \Lambda}\widehat{H}_{\bullet}(\mM_{\ov{d}_{\infty}}) = H_{\bullet-2 \ \ovninfty\chi(\ov{d}_{\infty},\ov{d}_{\infty})}(\mM_{\ov{d}_{\infty}})\,.
$$
 \end{definition}
Note that this in general does not solve the problem with non-degeneracy. In fact, it can make it worse as can be seen by framing the $A_2$ quiver with the framing vector $\ov{n}=(1,1)$. Instead, I construct a new quiver with an extra set of frozen vertices $(V)$ which is just equal to $V$ but I use $(-)$ to distinguish the new copy.
I also add an edge between each new vertex and its original copy leading to the data: $$(Q^{\fr}, F^{\fr},\mR)\,,\quad Q^{\fr}=\big((V)\sqcup V, E\sqcup V\big)\,,\quad F^{\fr} = (V)\,.$$
 For the Kronecker quiver and the example from \eqref{Eq:equiv}, this leads to 
$$
\begin{tikzcd}
\overset{(1)}{\diamond}\arrow[d]&\arrow[d]\overset{(2)}{\diamond}\\
\overset{1}{\bullet}\arrow[r, shift left = 0.2em]\arrow[r, shift right = 0.2em]&\overset{2}{\bullet}
\end{tikzcd}\,,
$$
$$
\begin{tikzcd}
\overset{(l-1)}{\diamond}\arrow[d]&\overset{(l-2)}{\diamond}\arrow[d]&\overset{(l-3)}{\diamond}\arrow[d]&\cdots&\arrow[d]\overset{(1)}{\diamond}\\
\underset{l-1}{\bullet}\arrow[r]&\underset{l-2}{\bullet}\arrow[r]&\underset{l-3}{\bullet}\arrow[r]&\cdots\arrow[r]&\underset{1}{\bullet}
\end{tikzcd}\,.
$$
I will use $\chi^{\fr}$ to denote the Euler form from Definition \ref{def:eulerform} for the quiver $Q^{\fr}$ and its symmetrization will be labelled by $\chi^{\fr}_{\sym}$ without specifying the dash. This is because the rigidified version is the more natural one in this case so I will omit the dash in all notations related to $Q^{\fr}$.
\begin{lemma}
\label{lem:Vfrnondegenerate}
 Consider the framed quiver $^{\ov{n}}_{\infty}Q$ for some framing vector $\ov{n}$, then there exist natural morphisms 
 $$
 \begin{tikzcd}
 \MQ\arrow[r,"^{\ov{n}}_{\infty}\iota"]& \mM_{^{\ov{n}}_{\infty}Q}\arrow[r,"\iota^{\fr}"]&\mM_{Q^{\fr}}
 \end{tikzcd}
 $$
 inducing inclusions  of lattice vertex algebras:
 \begin{equation}
 \label{eq:inclusionsofVA}
 \begin{tikzcd}
 \widehat{H}_{\bullet}(\MQ) = V_{\bullet} \arrow[r,hookrightarrow,"^{\ov{n}}_{\infty}\iota_*"]&\widehat{H}'_{\bullet}(\mM_{^{\ov{n}}_{\infty}Q})= {}^{\ov{n}}_{\infty}V'_{\bullet}\arrow[r,hookrightarrow,"\iota^{\fr}_*"]&\widehat{H}_{\bullet}(\mM_{Q^{\fr}})=V^{\fr}_{\bullet}
 \end{tikzcd}
 \end{equation}
 and 
 $$
 \begin{tikzcd}
     \widehat{H}_{\bullet}(\MQ) = V_{\bullet} \arrow[r,hookrightarrow,"^{\ov{n}}_{\infty}\iota_*"]&\widehat{H}_{\bullet}(\mM_{^{\ov{n}}_{\infty}Q})={}^{\ov{n}}_{\infty}V_{\bullet}\,.
 \end{tikzcd}
 $$
The lattice vertex algebra $V^{\fr}_{\bullet}$ is associated to the lattice $\Lambda^{\fr}=K^0(Q^{\fr})$ with the non-degenerate pairing $\chi^{\fr}_{\sym}$. Therefore, $V^{\fr}_{\bullet}$ is a vertex operator algebra with the conformal element denoted by $\omega^{\fr}$. It is constructed by applying Theorem \ref{thm:geometricvaconstruction}, and I denote the associated Virasoro operators by $L^{\fr}_k$. 
\end{lemma}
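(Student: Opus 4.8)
The plan is to exhibit both arrows in \eqref{eq:inclusionsofVA} as the lattice-vertex-algebra morphisms attached to isometric embeddings of the underlying lattices, obtained by applying the functoriality of Joyce's construction (Theorem \ref{thm:geometricvaconstruction}) to two explicit morphisms of moduli stacks, and then to deduce non-degeneracy of $\chi^{\fr}_{\sym}$ from a short determinant computation. First I would construct the stack maps. The morphism $\ovninfty\iota\colon\MQ\to\mM_{\Qfr}$ sends a complex of representations $(\ov{\crU}^\bullet,\ov{\varphi})$ of $Q$ to the representation of $\Qfr$ with the zero complex placed at the added vertex $\infty$; because every new edge has tail $\infty$, all framing maps are forced to vanish, so this is well defined in families. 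The morphism $\iota^{\fr}\colon\mM_{\Qfr}\to\mM_{Q^{\fr}}$ reorganizes the framing data: it keeps $\crU_v$ at each original vertex, sets $\crU_{(v)}=\crU_\infty^{\oplus n_v}$ at each frozen copy $(v)$, and takes the single edge $(v)\to v$ to be the sum $\phi^\infty_v$ of the $n_v$ arrows $\infty\to v$ from \eqref{eq:sumofphi}. Both maps visibly intertwine the direct-sum maps $\Sigma$ and the scaling actions $\rho$.

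Next I would check the $\Ext$-complex compatibility that upgrades these to vertex-algebra morphisms. For $\ovninfty\iota$, restricting along $\crU_\infty=0$ kills both the (absent, since $\infty$ is frozen) endomorphism term at $\infty$ and all framing-edge summands $\crU_\infty^\vee\boxtimes\crU_v$, so $(\ovninfty\iota\times\ovninfty\iota)^\ast\Ext'_{\Qfr}\simeq\Ext_Q$; the identical computation with the non-rigidified complex (whose only extra term $\crU_\infty^\vee\boxtimes\crU_\infty$ again vanishes) yields the second displayed inclusion into $\ovninfty V_\bullet$. For $\iota^{\fr}$, the identity $\crU_{(v)}^\vee\boxtimes\crU_v=\bigoplus_{i=1}^{n_v}\crU_\infty^\vee\boxtimes\crU_v$ matches the single frozen edge of $Q^{\fr}$ with the $n_v$ framing edges of $\Qfr$ summand by summand, and the endomorphism sums agree because $\infty$ and every $(v)$ are frozen; hence $(\iota^{\fr}\times\iota^{\fr})^\ast\Ext_{Q^{\fr}}\simeq\Ext'_{\Qfr}$. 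By Theorem \ref{thm:geometricvaconstruction} both pushforwards are morphisms of lattice vertex algebras.

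I would then read off the induced maps on $K^0$ and confirm they are injective isometries. The map $\ovninfty\iota$ induces $\ov{d}\mapsto(0,\ov{d})$ on $\Lambda=K^0(Q)$, and putting $c_\infty=d_\infty=0$ in the corrected forms shows $\ovninfty\chi'_{\sym}$ and $\ovninfty\chi_{\sym}$ both restrict to $\chi_{\sym}$, so this is an isometric embedding. The map $\iota^{\fr}$ induces $v\mapsto v$ and $\infty\mapsto\sum_w n_w(w)$, since $d_{(v)}=n_v d_\infty$; computing in the basis $\{(v)\}_v\cup\{v\}_v$ with $\chi^{\fr}_{\sym}((v),w)=-\delta_{v,w}$, $\chi^{\fr}_{\sym}((v),(w))=0$ and $\chi^{\fr}_{\sym}(v,w)=\chi_{\sym}(v,w)$ reproduces $\ovninfty\chi'_{\sym}(\infty,w)=-n_w$, $\ovninfty\chi'_{\sym}(\infty,\infty)=0$ and $\ovninfty\chi'_{\sym}(v,w)=\chi_{\sym}(v,w)$, so $\iota^{\fr}$ is isometric as well. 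Both lattice maps are injective, and an injective isometric embedding of lattices induces an injective map of the associated lattice vertex algebras; this gives the inclusions in \eqref{eq:inclusionsofVA}. For non-degeneracy, the same three pairings say that the Gram matrix of $\chi^{\fr}_{\sym}$ in the basis $\{(v)\}_v\cup\{v\}_v$ has the block form $\left(\begin{smallmatrix}0&-I\\-I&M\end{smallmatrix}\right)$, with $M$ the Gram matrix of $\chi_{\sym}$ on $\ZZ^V$ and the zero block recording that the frozen copies carry no endomorphisms, edges or relations among themselves. As the anti-diagonal blocks equal $-I$, this matrix is invertible with determinant $\pm1$ irrespective of $M$, so $\chi^{\fr}_{\sym}$ is non-degenerate and Theorem \ref{thm:geometricvaconstruction} promotes $V^{\fr}_\bullet$ to a vertex operator algebra with conformal element $\omega^{\fr}$ as in \eqref{eq:omega} and Virasoro operators $L^{\fr}_k$.

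I expect the main obstacle to be the verification in the second step that $\iota^{\fr}$—which is not an inclusion of quivers but the diagonal-plus-direct-sum reorganization $\crU_{(v)}=\crU_\infty^{\oplus n_v}$—is a genuine morphism of the moduli stacks compatible with $\Sigma$, $\rho$, and, in the precise rigidified form, the $\Ext$-complexes, so that the functoriality of Theorem \ref{thm:geometricvaconstruction} applies. By contrast, once the Gram matrix is written down the non-degeneracy of $\chi^{\fr}_{\sym}$ is immediate.
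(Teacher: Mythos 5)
Your proposal is correct and follows essentially the same route as the paper: the same two stack morphisms (zero complex at $\infty$, and $\crU_{(v)}=\crU_\infty^{\oplus n_v}$ with edge map $\phi^\infty_v$), the same identification of $\Ext$-complexes under pullback (the paper phrases this via the cone descriptions of $\Ext'_{\Qfr}$ and $\Ext_{Q^{\fr}}$ over $\Ext_Q$ and then invokes the functoriality result of Gross--Joyce--Tanaka, which is what your appeal to the geometric construction amounts to), and the same block Gram matrix $\left(\begin{smallmatrix}0&-I\\-I&M\end{smallmatrix}\right)$ for non-degeneracy of $\chi^{\fr}_{\sym}$. Your explicit check that the induced lattice maps $\ov{d}\mapsto(0,\ov{d})$ and $\infty\mapsto\sum_w n_w(w)$ are isometric embeddings is a useful spelled-out detail that the paper leaves implicit, but it does not change the structure of the argument.
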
 
\begin{proof}
    The morphism $^{\ov{n}}_{\infty}\iota:\MQ\to \mM_{^{\ov{n}}_\infty Q}$ is induced by mapping each complex of representations of $Q$ to itself with $0$ at $\infty$. The map $\iota^{\fr}: \mM_{^{\ov{n}}_\infty Q}\to \mM_{Q^{\fr}}$ originates from acting on complexes of representation of $^{\ov{n}}_\infty Q$ with a functor that preserves the representation of $Q$ and constructs for each edge $\overset{(v)}{\diamond}\longrightarrow \overset{v}{\bullet}$ the morphisms $\phi^{\infty}_v:U^\bullet_{(v)}=(U^{\bullet}_{\infty})^{\oplus n_v}\to U^\bullet_{v}$ from \eqref{eq:sumofphi} applied to complexes of vector spaces. 
    That $^{\ov{n}}_{\infty}\iota_*$ leads to a morphism of vertex algebras in both cases is immediate so I only discuss the second inclusion $\iota^{\fr}_*$. Using that $\iota^{\fr}$ was induced by a linear exact functor, I only need to compare the K-theory classes 
    $$\big((\iota^{\fr})^*\Ext_{Q^{\fr}}\big)^{\textnormal{t}} = (  \Ext'_{^{\ov{n}}_{\infty}Q})^{\textnormal{t}}$$
    to apply \cite[Definition 2.11, Theorem 2.12]{GJT} with their $\sigma,\xi,F$ set as $\sigma =\iota^{\fr}$, and $\xi,F =0$.     
     Both on $\mM_{^{\ov{n}}_{\infty}Q}\times \mM_{^{\ov{n}}_{\infty}Q}$ and $\mM_{Q^{\fr}}\times \mM_{Q^{\fr}}$, I denote by $\Ext_Q$ the part of $\Ext'_{^{\ov{n}}_{\infty}Q}$ (respectively of $\Ext_{Q^{\fr}}$) stemming from $Q$. Recalling the definition from \eqref{eq:extcomplex}, one can relate the complexes via
    \begin{align*}
    \Ext'_{^{\ov{n}}_{\infty}Q} &= \Cone\Big(\Ext_Q\longrightarrow \bigoplus_{v\in V}\big(\crU_{\infty}^\vee\otimes \crU_v\big)^{\oplus n_v}\Big)\,,\\ \quad \Ext_{Q^{\fr}} &= \Cone\Big(\Ext_Q\longrightarrow \bigoplus_{v\in V}\crU_{(v)}^\vee\otimes \crU_v\Big)\,.
    \end{align*}
    Restricting $\crU_{(v)}$ along $\iota^{\fr}$ results in $(\crU^\bullet_{\infty})^{\oplus n_v}$. This identifies the two K-theory classes and concludes that $\iota^{\fr}_*$ is a morphism of vertex algebras. 

   The final statement I need to check is the non-degeneracy of $\chi^{\fr}_{\sym}$. Recall now from Definition \ref{def:eulerform} that one can write 
   $$\chi^{\fr}(-,-) = \langle -, \td(Q^{\fr})\cdot (-)\rangle\,,\qquad \td(Q^{\fr})=\pi_{\big((V)\sqcup V\big)\backslash (V)}-A^{Q^{\fr}}+ S^{Q^{\fr}}\,.$$
   Thus to show non-degeneracy of $\chi^{\fr}_{\sym}$, I need to prove that $\td(Q^{\fr})+\td(Q^{\fr})^{\intercal}$ is an invertible matrix. It has the following form
   $$
\td(Q^{\fr})+\td(Q^{\fr})^{\intercal} = \renewcommand{\arraystretch}{1.8} \begin{pmatrix}
    0&-\id_{\ZZ^{|V|}}\\
   - \id_{\ZZ^{|V|}}&\td(Q) + \td(Q)^{\intercal}
\end{pmatrix}
   $$
   which is clearly invertible.
\end{proof}
\begin{remark}
\label{rem:Lierigid}
One approach now is to work with $(V^{\fr}_\bullet,\omega^{\fr})$ while restricting to $^{\ov{n}}_{\infty}V'_{\bullet}$ or $V_{\bullet}$ when necessary. I will show in Lemma \ref{lem:sameLie} that the Lie brackets induced by $^{\ov{n}}_{\infty}V'_{\bullet}$ and $^{\ov{n}}_{\infty}V_{\bullet}$ coincide if at least one of the classes being acted on comes from $V_{\bullet}$. As Joyce's wall-crossing is phrased in terms of $^{\ov{n}}_{\infty}V_{\bullet}$, one is still able to use the results for $(V^{\fr}_\bullet,\omega^{\fr})$ under this condition like we did in \cite{BML}. In doing so, one is forced to distinguish between Virasoro constraints with and without frozen vertices which seems ad hoc because the general theory should be applicable to any nice additive category with perfect obstruction theories on its moduli.

   One can circumvent this entire issue by working with an abstract non-degenerate lattice $(\Lambda^{\nd},\chi^{\nd}_{\sym})$ containing $\ovninfty\Lambda$ and the pairing  $\chi^{\nd}_{\sym}$ restricting to $\ovninfty \chi_{\sym}$. The downside in doing so is the lack of an explicit conformal element and no knowledge about the conformal charge. To be true to one of the main goals of finding geometric interpretations of the representation theoretic structures provided by the vertex algebra, I combine both points of view. In conclusion, I work with two conformal elements -- $\omega^{\fr}$ with conformal charge $2|\Lambda|$ and the inexplicit $\omega^{\nd}$ that does not differentiate between the pair and sheaf Virasoro constraints of \cite[§2.5 and §2.6]{BML}. \end{remark}

\section{Wall-crossing of Virasoro constraints of quivers}
\label{sec:WC}
I will rephrase Virasoro constraints defined in §\ref{sec:framedw0vir}, §\ref{sec:framedvir} as the condition that the (virtual) fundamental classes of quiver-representations are physical states in the Lie algebra constructed from the geometric vertex algebra. This implies compatibility of the Virasoro constraints with wall-crossing which I use to prove the constraints in large generality by reducing to point moduli spaces where the check is trivial. In Remark \ref{rem:relatingtoGW}, I introduce some notation that is meant to draw a parallel with the Virasoro constraints on the Gromov--Witten side.
\subsection{Lie algebra of physical states}
\label{sec:Lie}
Let $V_{\bullet}$ for now be any lattice vertex algebra for a non-degenerate lattice $(\Lambda,\mQ)$. By the standard construction of Borcherds, it induces a graded Lie algebra 
$$
\widecheck{V}_{\bullet} = V_{\bullet+2}\,/\,T(V_{\bullet}),\quad [\overline{a},\overline{b}]=\overline{a_{(0)}b}
$$ where $\ov{(-)}$ this time denotes the associated class in the quotient. When working with a fixed $\alpha\in \Lambda$, I will write 
$$
e^{\alpha}\otimes \widecheck{\TT}_{\Lambda} = e^{\alpha}\otimes \TT_{\Lambda}/ T\big(e^{\alpha}\otimes \TT_{\Lambda}\big)\,.
$$
One intriguing consequence of Virasoro constraints is that Wall-crossing eventually takes place in a smaller Lie subalgebra of \textit{physical states}. One way to define it is in terms of the \textit{partial lift} of the Lie bracket that was introduced in \cite[Lemma 3.12]{BML}. As the lift is no longer a bilinear map from a single vector space to itself, it is more natural to think of it as a linear morphism:
\begin{equation}
\label{eq:hatad}
\widehat{\ad} : V_{\bullet}\longrightarrow  \Hom\big(\widecheck{V}_{\bullet}, V_{\bullet}\big)\,,\qquad \widehat{\ad}(b)(\ov{a}) = -a_{(0)}b\,.
\end{equation}
Composing $\widehat{\ad}(b)$ with the projection $\ov{(-)}: V_{\bullet+2}\to \widecheck{V}_{\bullet}$ is the usual derivation $\ad(\ov{b})$.

I now recall the two alternative ways of defining the Lie algebra of physical states.
\begin{definition}[{\cite[§5]{Borcherds}, \cite[Corollary 4.10]{Ka98}, \cite[Proposition 3.13]{BML}}]
\label{Def: physicalstates}
\sloppy Let $\big(V_{\bullet},\omega\big)$ be an operator lattice vertex algebra. The space of physical states of conformal weight $i\in \ZZ$ is defined as 
$$P_i= \big\{a\in V_{\bullet}\,\big|\, L_0(a)= i\cdot a,\ L_n(a)=0\ \textnormal{for all}\ n\geq 1\big\}\subset V_i\,.
$$
The quotient $\widecheck{P}_0 = P_1/T(P_0)$ is naturally a Lie subalgebra of $\widecheck{V}_{\bullet}$ with a well-defined restriction
$$
\widehat{\ad}: P_i \longrightarrow \Hom\big(\widecheck{P}_0,  P_i\big)\,.
 $$
\end{definition}
The following can be later used to show that an element of $\widecheck{P}_0$ satisfies the weight zero Virasoro constraints from Definition \ref{def:weight0}. It also explains where the form of the operator $\bL_{\inv}$ comes from. 
\begin{proposition}[{\cite[Definition 3.14, Lemma 3.15, Proposition 3.16]{BML}}]
\label{prop:combinedBLM}
Define  
$$
\widecheck{K}_0 = \{\ov{a}\in \widecheck{V}_0 \ | \ \widehat{\ad}(\omega)(\ov{a})=0\}\,,
$$
then the condition $\ov{a}\in \widecheck{K}_0$ can be equivalently written in terms of the Virasoro operators of $\omega$ as
$$
\sum_{j\geq -1}\frac{(-1)^j}{(j+1)!}T^{j+1}\circ L_j (a) = 0\,.
$$
Suppose additionally that $\ov{a}\in e^{\alpha}\otimes \widecheck{\TT}_{\Lambda}$ for $\alpha\neq 0$, then $\ov{a}\in \widecheck{K}_0$ if and only if $\ov{a}\in \widecheck{P}_0$. 
\end{proposition}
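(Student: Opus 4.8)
The plan is to reduce everything to the single skew-symmetry computation $\widehat{\ad}(\omega)(\ov a)=-a_{(0)}\omega$ and then handle the two inclusions in the second assertion independently.

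\emph{Step 1: the explicit formula.} Since Joyce's lattice vertex algebra is purely even, I would apply \eqref{eq:skew} with $n=0$ and $b=\omega$ to get $a_{(0)}\omega=\sum_{i\geq 0}(-1)^{i+1}\tfrac{T^i}{i!}\,\omega_{(i)}a$. Writing $\omega_{(i)}=L_{i-1}$ and setting $j=i-1$ converts this into
\[
a_{(0)}\omega=\sum_{j\geq -1}\frac{(-1)^{j}}{(j+1)!}\,T^{j+1}L_j(a)=:\Omega(a),
\]
a finite sum because $L_j(a)$ pushes the degree below the bottom of the charge sector once $j$ is large. As $\widehat{\ad}(\omega)(\ov a)=-a_{(0)}\omega=-\Omega(a)$ by definition, the vanishing displayed in the statement is exactly the condition $\widehat{\ad}(\omega)(\ov a)=0$, i.e. $\ov a\in\widecheck K_0$. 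I would also check that this descends to $\widecheck V_\bullet$: replacing $a$ by $a+T(c)$ alters $a_{(0)}\omega$ by $(T c)_{(0)}\omega=0$, using the standard identity $(T c)_{(0)}=0$.

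\emph{Step 2: the easy inclusion $\widecheck P_0\subseteq\widecheck K_0$.} For a representative $a\in P_1$ one has $L_0(a)=a$, $L_{-1}(a)=T(a)$ and $L_j(a)=0$ for $j\geq 1$, so only the $j=-1$ and $j=0$ terms of $\Omega(a)$ survive and they cancel: $-T(a)+T(L_0 a)=0$. Hence $\Omega(a)=0$ and $\ov a\in\widecheck K_0$, and this direction uses no hypothesis on $\alpha$.

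\emph{Step 3: the hard inclusion for $\alpha\neq 0$.} The structural input I would invoke is that, for a non-degenerate lattice, $T=L_{-1}$ is injective on every nonzero-charge sector $e^\alpha\otimes\TT_\Lambda$ (its kernel in the whole algebra is $\CC\ket{0}$, which sits in charge $0$). Rewriting the relation as $\Omega(a)=T^2\big(\sum_{j\geq 1}\tfrac{(-1)^j}{(j+1)!}T^{j-1}L_j(a)\big)=0$ and cancelling $T^2$ shows $L_1(a)\in\Image(T)$. I would then gauge-fix inductively, modifying $a$ by elements of $T(V_0)$ so as to annihilate $L_1(a),L_2(a),\dots$ in turn; the hypothesis $\Omega(a)=0$ is precisely what keeps the successive obstructions consistent, and the solvability at each stage is governed by the representation theory of the $\mathfrak{sl}_2$-triple $(L_{-1},L_0,L_1)$ acting on the charge-$\alpha$ Fock space. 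This produces a representative $a'\in P_1$ with $\ov a=\ov{a'}$, whence $\ov a\in\widecheck P_0$; this is the content of \cite[Proposition 3.16]{BML}. I expect this inductive gauge-fixing — together with confirming that the injectivity of $T$ genuinely persists in every degree of the possibly indefinite charge-$\alpha$ sector — to be the main obstacle, whereas Steps 1 and 2 are purely formal.
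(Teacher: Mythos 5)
Your Steps 1 and 2 are correct, and they reproduce exactly the formal content behind the citation: the skew-symmetry computation with $n=0$, $b=\omega$ (all Koszul signs trivial, as the quiver vertex algebra is purely bosonic), the reindexing $\omega_{(i)}=L_{i-1}$, the descent to $\widecheck{V}_{\bullet}$ via $(Tc)_{(0)}=0$, and the cancellation of the $j=-1,0$ terms for a representative of pure conformal weight $1$. Bear in mind that the paper itself gives no proof of Proposition \ref{prop:combinedBLM} — the bracketed reference to \cite{BML} \emph{is} the proof — so the comparison can only be with the argument of loc.\ cit., which your Step 3 does not reproduce.

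Step 3 is where there is a genuine gap. After the (correct) reduction to $\sum_{j\geq 1}\tfrac{(-1)^j}{(j+1)!}T^{j-1}L_j(a)=0$, you propose to kill $L_1(a),L_2(a),\dots$ in turn by shifts $a\mapsto a+T(b)$. Since $L_j(a+Tb)=L_j(a)+TL_j(b)+(j+1)L_{j-1}(b)$, each stage requires producing a weight-zero $b$ whose images $L_j(b)$ take prescribed values in the \emph{negative}-weight subspaces of $e^{\alpha}\otimes\TT_{\Lambda}$ (these are nonzero in general, the minimal weight being $\chi(\alpha,\alpha)$, which can be negative for an indefinite lattice). That is a surjectivity statement for Virasoro lowering operators on a Fock module, which does not follow from the representation theory of the $\mathfrak{sl}_2$-triple $(L_{-1},L_0,L_1)$; you neither formulate nor prove it, and instead close the argument by citing the very proposition being proved. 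Note also that the point you flag as the main danger — injectivity of $T$ on the charge-$\alpha$ sector — is actually the easy part: the top polynomial-degree term of $T(e^{\alpha}\otimes p)$ is $\alpha_1\cdot p$ and $\alpha_1$ is a non-zero-divisor in $\TT_{\Lambda}$, so injectivity holds in every degree. The mechanism that actually establishes the hard inclusion, and which this paper exploits elsewhere (see the proof of Proposition \ref{cor:PMimpliesadPM} ii) and iv)), is different and avoids any such surjectivity: since $\alpha\neq 0$ one picks $v$ with $\alpha_v\neq 0$ and gauge-fixes \emph{once}, to the unique representative $a'$ of $\ov{a}$ with $\tau_1(v)\cap a'=0$ (an elementary filtration/triangularity argument whose uniqueness part is Proposition \ref{cor:PMimpliesadPM} ii)); then the purely formal computation behind \cite[Proposition 2.16]{BML}, dualized through $(\bR_{-1})^{\dagger}=T$ and $(\bL_k)^{\dagger}=L_k$ (Theorem \ref{thm:twovirasoro}, \cite[Theorem 4.12]{BML}), converts the hypothesis $\widehat{\ad}(\omega)(\ov{a})=0$ directly into $L_k(a')=\delta_k a'$ for all $k\geq 0$, i.e.\ $a'\in P_1$. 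In short, what is missing in your proposal is the single-normalization idea plus the formal identity; an open-ended inductive gauge-fixing whose solvability is deferred to "$\mathfrak{sl}_2$ theory" would not compile into a proof.
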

In the next two sections, I will relate the Lie algebra $\widecheck{P}_0$ to Virasoro constraints by showing that the dual of the $(-)^{\fr}$ version of $\bL_k$ under the pairing between homology and cohomology of $\mM_{Q^{\fr}}$ equals to $L^{\fr}_k +\delta_k$.
\subsection{Dual Virasoro algebras}
I will denote the Virasoro operators from Definition \ref{def:virop} for the quiver $Q^{\fr}$ by 
$$
\bL^{\fr}_k = \bR^{\fr}_k+\bT^{\fr}_k:\TT^{Q^{\fr}}_{\alpha}\longrightarrow \TT^{Q^{\fr}}_{\alpha}\,.
$$
As $\TT^{Q^{\fr}}_{\alpha}$ is dual to $e^{\alpha}\otimes \TT_{\Lambda^{ \fr}}$ by the pairing $\langle -,-\rangle$ in \eqref{eq:pairingTT}, one can construct the duals 
$$
(\bL^{\fr}_k)^{\dagger} = (\bR^{\fr}_k)^{\dagger}+(\bT^{\fr}_k)^{\dagger}: e^{\alpha}\otimes \TT_{\Lambda^{\fr}} \longrightarrow e^{\alpha}\otimes \TT_{\Lambda^{\fr}}
$$
which I collect into operators on $V^{\fr}_{\bullet} = \CC[\Lambda^{\fr}]\otimes \TT_{\Lambda^{\fr}}$. Together with $L^{\fr}_k$, there are now two potentially different sets of operators satisfying Virasoro commutation relations. The point of this subsection is to show that they are related and that their restrictions to $Q$ are identical. 

One of the main results of our previous work -- \cite[Theorem 4.12]{BML} -- stated the analog of the next theorem for sheaves. For this reason, I only discuss the adaptations needed for quivers and refer the reader to \cite{BML} for the part of the proof that remains the same.
\begin{theorem}
\label{thm:twovirasoro}
For each $k\geq -1$, the operators $(\bL^{\fr}_k)^{\dagger}$ and $L^{\fr}_k$ on $\CC[\Lambda^{\fr}]\otimes\TT_{\Lambda^{\fr}}$ are related by
$$
(\bL^{\fr}_k)^{\dagger} = L^{\fr}_k+\delta_k\,.
$$ 
Denoting by $(\bL_k)^{\dagger}$ the restriction of $(\bL^{\fr}_k)^{\dagger}$ to $\CC[\Lambda]\otimes \TT_{\Lambda}$ induced by the inclusion $\Lambda \hookrightarrow \Lambda^{\fr}$, the above result implies
$$
(\bL_k)^{\dagger} = L_k\,.
$$
\end{theorem}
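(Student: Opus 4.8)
The plan is to adapt the argument of \cite[Theorem 4.12]{BML} to the present setting, where the simplification relative to \emph{loc. cit.} is that Joyce's vertex algebra $V^{\fr}_{\bullet}$ for a quiver carries no (anti-)fermionic part, so it is the purely bosonic lattice vertex algebra of $(\Lambda^{\fr},\chi^{\fr}_{\sym})$, and the Hodge-theoretic integral $\int_X\td(X)\,\ch(-)\ch(-)$ that produced $\bT_k$ for sheaves is replaced by the combinatorial datum $\td(Q^{\fr})$ through $\chi^{\fr}(\ov{c},\ov{d})=\langle\ov{c},\td(Q^{\fr})\cdot\ov{d}\rangle$. Throughout I use the non-degenerate pairing $\chi^{\fr}_{\sym}$ from Lemma \ref{lem:Vfrnondegenerate}, its dual basis $\{\hat{v}\}$ with $\chi^{\fr}_{\sym}(\hat{v},w)=\delta_{v,w}$, and the conformal element $\omega^{\fr}=\sum_{v}\hat{v}_{(-1)}v_{(-1)}\ket{0}$, so that $L^{\fr}_k=\omega^{\fr}_{(k+1)}$.

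First I would reduce the identity $(\bL^{\fr}_k)^{\dagger}=L^{\fr}_k+\delta_k$ to the generators, observing that both sides are at most second-order differential operators on $V^{\fr}_{\bullet}=\CC[\Lambda^{\fr}]\otimes\TT_{\Lambda^{\fr}}$; it therefore suffices to compare their action on the Heisenberg modes $x_{-j}$ and on the group-like classes $e^{\ov{d}}$. To put $L^{\fr}_k$ in usable form I would expand $\omega^{\fr}_{(k+1)}$ via the reconstruction formula \eqref{Eq: reconstruction} together with the field expansion \eqref{eq:fieldastau}, and sort the modes of $\omega^{\fr}$ into (a) a part bilinear in one creation and one annihilation mode, and (b) a part that is quadratic in annihilation modes together with a zero-mode scalar.

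The matching then proceeds term by term. Since $\bR^{\fr}_k$ is a derivation on $\TT^{Q^{\fr}}_{\ov{d}}$, its dual is determined on generators, and the pairing \eqref{eq:pairingTT} gives $(\bR^{\fr}_k)^{\dagger}x_{-j}=j\,x_{-(j-k)}$, which is exactly the mixed part (a) of $L^{\fr}_k$; here the factorials $\prod_{l=0}^{k}(i+l)=(i+k)!/(i-1)!$ of Definition \ref{def:virop} cancel against the normalizing factorials in \eqref{eq:pairingTT}. The operator $\bT^{\fr}_k$ is multiplication by $\sum_{i+j=k}i!\,j!\,\tau_i\tau_j(\Delta_*\td(Q^{\fr}))+\delta_k$; dualizing multiplication by a quadratic descendent yields a second-order operator whose coefficients are the $\chi^{\fr}_{\sym}$-pairings produced by $\Delta_*\td(Q^{\fr})$ and the dual basis $\hat{v}$, reproducing part (b). The scalar $\delta_k=\delta_k(1-\delta_{|F^{\fr}|})$ is present precisely because $F^{\fr}=(V)\neq\emptyset$, and it yields the additive correction $+\delta_k$. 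The cleanest check is $k=0$: there $(\bR^{\fr}_0)^{\dagger}$ is the polynomial-degree operator on $\TT_{\Lambda^{\fr}}$, while $(\bT^{\fr}_0)^{\dagger}$ acts on the component $e^{\ov{d}}$ by $\chi^{\fr}(\ov{d},\ov{d})+1$, so that $(\bL^{\fr}_0)^{\dagger}=L^{\fr}_0+1=L^{\fr}_0+\delta_0$ as required. I expect the main obstacle to be the combinatorial bookkeeping of the normal-ordered products in (b) and confirming that the dual basis $\hat{v}$ converts $\td(Q^{\fr})$ into exactly the coefficients dictated by $\chi^{\fr}_{\sym}$; this is the only place where the quiver computation differs substantively from \cite{BML}, with everything else transcribing essentially verbatim.

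For the second assertion I would restrict the established identity along $\Lambda\hookrightarrow\Lambda^{\fr}$. The pairing \eqref{eq:pairingTT} is compatible with this inclusion, so $(\bL_k)^{\dagger}$, defined as the restriction of $(\bL^{\fr}_k)^{\dagger}$, equals the restriction of $L^{\fr}_k+\delta_k$. On the $Q$-part the descendents attached to the frozen copies $(v)$ act as in Remark \ref{rem:frozenisrigid}, and the quiver operators $\bL_k$ for $Q$ (where $F=\emptyset$) differ from $\bL^{\fr}_k$ exactly by the term $\delta_k(1-\delta_{|F|})$ recorded in \eqref{eq:dashednot}. Hence the $+\delta_k$ correction is absorbed, leaving $(\bL_k)^{\dagger}=L_k$, where $L_k$ denotes the restriction of $L^{\fr}_k$ as in \S\ref{sec:rigidvsnonrigid}.
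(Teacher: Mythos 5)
Your proposal is correct and follows essentially the same route as the paper's own proof: expand the modes of $\omega^{\fr}$ via the reconstruction formula \eqref{Eq: reconstruction}, split $L^{\fr}_k$ into the mixed creation--annihilation part $R^{\fr}_k$ and the part $T^{\fr}_k$ quadratic in annihilation modes, identify these respectively with $(\bR^{\fr}_k)^{\dagger}$ (via the pairing \eqref{eq:pairingTT}, with the same factorial cancellation) and with the dual of multiplication by $\sum_{i+j=k} i!\,j!\,\tau_i\tau_j\big(\Delta_*\td(Q^{\fr})\big)$ (via the dual-basis identity for $\chi^{\fr}_{\sym}$), attribute the $+\delta_k$ to the frozen-vertex term in $\bT^{\fr}_k$, and deduce the second claim by restriction, using that $\bT_k$ for $F=\emptyset$ carries no $\delta_k$ term. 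Your $k=0$ sanity check and the direct verification of the $R$-part on generators (which the paper delegates to \cite[Theorem 4.12]{BML}) are the only cosmetic differences.

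One small repair: your opening reduction --- that it suffices to compare both sides on the generators $x_{-j}$ and on $e^{\ov{d}}$ because both are at most second-order differential operators --- is not valid as stated, since a pure second-order term such as $\partial^2/\partial v_{-i}\partial w_{-j}$ annihilates every element of polynomial degree at most one and hence is invisible in such a comparison; one must also test against quadratic monomials, or compare the coefficient bilinear forms of the second-order parts directly. This does not damage the proof, because your actual matching of part (b) is exactly such a direct coefficient comparison: it identifies the coefficients of the second-order part of $L^{\fr}_k$ with the $\chi^{\fr}_{\sym}$-pairings produced by $\Delta_*\td(Q^{\fr})$, which is the computation the paper performs.
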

\begin{proof}
    I mostly sketch the proof while referring to the proof of \cite[Theorem 4.12]{BML} for a more detailed account. Applying \eqref{eq:Yaz} together with $\eqref{Eq: reconstruction}$ to \eqref{eq:omega} in the case of the quiver $Q^{\fr}$ with vertices $v^{\fr}\in V^{\fr}$, I obtain
    \begin{align*}
    Y(\omega^{\fr}, z)=\frac{1}{2}\sum_{v^{\fr}\in V^{\fr}}\bigg( \sum_{\begin{subarray}{c} i,j\geq 1\end{subarray}}\hat{v}^{\fr}_{(-i)}v^{\fr}_{(-j)}z^{i+j-2} +&\sum_{\begin{subarray}{c} i\geq 1,\\j\geq 0\end{subarray}}\big(\hat{v}^{\fr}_{(-i)}v^{\fr}_j + v^{\fr}_{(-i)}\hat{v}^{\fr}_{(j)}z^{i-j-2}\big)\\+&\sum_{i,j\geq 0 }\hat{v}^{\fr}_{(i)}v^{\fr}_{(j)}z^{-(i+j)-2}\bigg)\,.
    \end{align*}
The terms containing powers of $z$ less than or equal to $-1$ can be summed up to 
$$
\sum_{k\geq -1}(R^{\fr}_k + T^{\fr}_k)z^{-k-2}
$$
where 
$$R_k = \frac{1}{2}\sum_{v^{\fr}\in V^{\fr}}\sum_{\begin{subarray}{c} j-i=k,\\ i\geq 1,j\geq 0\end{subarray}}\big(\hat{v}^{\fr}_{(-i)}v^{\fr}_{(j)}+v^{\fr}_{(-i)}\hat{v}^{\fr}_{(j)}\big)\,,\qquad  T_k = \frac{1}{2}\sum_{v^{\fr}\in V^{\fr}}\sum_{\begin{subarray}{c} i+j=k,\\i,j\geq 0\end{subarray}}\hat{v}^{\fr}_{(i)}v^{\fr}_{(j)}\,.$$
The proof that $$(\bR^{\fr}_k)^{\dagger} = R^{\fr}_k$$ for $k\geq 0$ is identical to the one found in the second half of the proof of \cite[Theorem 4.12]{BML} after replacing $\ch^H_k(\gamma^{\pa})$ with $\tau_k(v^{\fr})$. In the case $k=-1$, one could also use the geometric argument given in \cite[Lemma 4.9]{BML}. In the end, I only need to prove that 
$$
(\bT^{\fr}_k)^{\dagger} = T^{\fr}_k\quad \textnormal{for}\quad k\geq -1\,.
$$
From \eqref{eq:fieldastau}, it follows that
$$
T^{\fr}_k = \frac{1}{2}\sum_{v^{\fr},w^{\fr}\in V^{\fr}}\sum_{\begin{subarray}{c} i+j=k,\\i,j\geq 0\end{subarray}}i!j!\chi^{\fr}_{\sym}(v^{\fr},w^{\fr})\tau_i(v^{\fr})\tau_j(w^{\fr})\,.
$$
Note that 
\begin{align*}
\sum_{v^{\fr},w^{\fr}\in V^{\fr}}\chi^{\fr}_{\sym}(v^{\fr},&w^{\fr}) \ v^{\fr}\boxtimes w^{\fr} \\
&= \sum_{v^{\fr},w^{\fr}\in V^{\fr}} \ \big\langle v^{\fr}, \big(\td(Q^{\fr}) +\td(Q^{\fr})^{\intercal}\big)\cdot w^{\fr}\big\rangle \ v^{\fr}\boxtimes w^{\fr}\\
&=\sum_{v^{\fr}\in V^{\fr}}\td(Q^{\fr})\cdot v^{\fr}\boxtimes v^{\fr} + \sum_{v^{\fr}\in V^{\fr}} v^{\fr}\boxtimes \td(Q^{\fr})\cdot v^{\fr} \,,
\end{align*}
so comparing with the definition of $\tau_i\tau_j\big(\Delta_*\td(Q^{\fr})\big)$ from Example \ref{ex:Tvir} and swapping the order of the $\tau$ factors once, I conclude that 
$$
   T^{\fr}_k +\delta_k =  \sum_{\begin{subarray}{c} i+j=k \\ i,j \geq 0 \end{subarray}}i!j!\tau_i\tau_j\big(\Delta_*\td(Q^{\fr})\big) +\delta_k =   (\bT^{\fr}_k)^{\dagger}\,.
$$
When restricting to $Q$ without frozen vertices, the $\delta_k$ term does not appear in $\bT_k$ so the second claim follows immediately from the above result. 
\end{proof}
This observation allowed us to view Virasoro constraints as a vanishing condition on the homology classes $[M^{\mu}_{Q,\ov{d}}]$ rather than the integrals with respect to it. The consequent connection to $\widecheck{P}_0$ will be recalled in the next subsection.
\subsection{Geometric interpretation of the Lie algebra of physical states}
\label{sec:geominterpLie}

I will denote a generic element of $\Lambda^{\fr}$ by $\alpha$, specifying that $\alpha=(\ov{c},\ov{d})$ only when necessary.
Consider the $[*/\GG_m]$-action $\rho$ on $\mM_{Q^{\fr}}$ and construct the rigidification $$\mM^{\rig}_{Q^{\fr}} =\mM_{Q^{\fr}}\mkern-7mu\fatslash [*/\GG_m]\,.$$
I also denote by $\mM^{\rig}_{\alpha}$ the rigidification of each of the connected components $\mM_{\alpha}$.

As a consequence of \cite[Proposition 3.24 a), b)]{Jo17} and the non-degeneracy of $\chi^{\fr}$, one can conclude that 
$$
H_{\bullet}(\mM^{\rig}_{\alpha})\cong H_{\bullet}(\mM_{\alpha})/T \big(H_{\bullet -2}(\mM_{\alpha})\big)
$$
whenever $\alpha\neq 0$. After adding the shift $$\widecheck{H}_{\bullet}(\mM^{\rig}_{\alpha}) = H_{\bullet+2-2\chi^{\fr}(\alpha,\alpha)}(\mM^{\rig}_{\alpha})\,,$$
one can collect the above isomorphisms into 
\begin{equation}
\label{eq:VcheckHcheck}
\bigoplus_{\alpha\in\Lambda^{\fr}\backslash \{0\}}\widecheck{H}_{\bullet}(\mM^{\rig}_{\alpha}) \cong \widecheck{V}_{\bullet} \backslash e^{0}\otimes\widecheck{\TT}_{\Lambda^{\fr}}\,.
\end{equation}
I implicitly used that the translation operator $T$ from \eqref{eq:Translation} is identified with the abstract one on $\CC[\Lambda^{\fr}]\otimes \TT_{\Lambda^{\fr}}$ from \eqref{eq:Tabstract}.

Before I state the main observations for the quiver $Q^{\fr}$, I compare the Lie brackets of $\ovninfty \widecheck{V}_{\bullet}$ and $\ovninfty \widecheck{V}'_{\bullet}$ as was already promised in Remark \ref{rem:rigidvsnonrigid}.
\begin{lemma}
    \label{lem:sameLie}
    Suppose that $a\in \ovninfty V_{\bullet}$ and $\ov{b}\in\widecheck{V}_{\bullet}\subset \ovninfty \widecheck{V}_{\bullet}$, and denote by $\widehat{\ad},\widehat{\ad}'$ the partial lifts of the Lie brackets on $\ovninfty \widecheck{V}_{\bullet}$ and $\ovninfty \widecheck{V}'_{\bullet}$ respectively, then 
    $$
    \widehat{\ad}'(a)(\ov{b}) =  \widehat{\ad}(a)(\ov{b})  \,.
    $$
\end{lemma}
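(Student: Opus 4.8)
The plan is to compare the two state-field correspondences on the nose and to show that the single discrepancy between them — a correction term in the symmetrised $\Ext$-complex — disappears once it is restricted to the cycle on which $b$ is supported. Write $\beta$ for the $K$-theory class of a lift $b\in\ovninfty V_{\bullet}$ of $\ov b$ and $\alpha$ for the class of $a$. By the definition \eqref{eq:hatad} of the partial lift we have $\widehat{\ad}(a)(\ov b)=-b_{(0)}a$ and $\widehat{\ad}'(a)(\ov b)=-b'_{(0)}a$, where $b_{(0)}$ and $b'_{(0)}$ are the zeroth modes extracted from the fields of $\ovninfty V_{\bullet}$ and $\ovninfty V'_{\bullet}$ respectively. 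It therefore suffices to prove that $Y(b,z)a$ is computed by literally the same expression \eqref{eq: joycefields} in both vertex algebras, for then $b_{(n)}a=b'_{(n)}a$ for all $n$ and the case $n=0$ gives the claim. The crucial input is that $\ov b\in\widecheck V_{\bullet}$ comes from $V_{\bullet}=\widehat H_{\bullet}(\mM_Q)$ through the inclusion $\ovninfty\iota_{*}$ of Lemma \ref{lem:Vfrnondegenerate}, so the lift $b$ lives on the image of $\ovninfty\iota$ and $\beta_{\infty}=0$.

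First I would compare the two scalar prefactors appearing in \eqref{eq: joycefields}. By the comparison of Euler forms \eqref{eq:comparingforms} one has $\ovninfty\chi(\beta,\alpha)=\ovninfty\chi'(\beta,\alpha)+\beta_{\infty}\alpha_{\infty}$ and $\ovninfty\chi_{\sym}(\beta,\alpha)=\ovninfty\chi'_{\sym}(\beta,\alpha)+2\beta_{\infty}\alpha_{\infty}$; since $\beta_{\infty}=0$, the sign $(-1)^{\ovninfty\chi(\beta,\alpha)}$ and the power $z^{\ovninfty\chi_{\sym}(\beta,\alpha)}$ coincide with their primed counterparts. Next I would compare the characteristic-class factors. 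From the definition of $\Ext_{\Qfr}$, which differs from $\Ext'_{\Qfr}$ only by the summand $\crU_{\infty}^{\vee}\boxtimes\crU_{\infty}$ at the frozen vertex $\infty$, one computes in $K$-theory
\[
[\Theta_{\Qfr}]=[\Theta'_{\Qfr}]+2\,[\crU_{\infty}\boxtimes\crU_{\infty}^{\vee}],
\]
where in the correction term the factor $\crU_{\infty}$ sits on the \emph{first} tensor slot — the slot carrying the state $b$ in $Y(b,z)a$. Writing $b=(\ovninfty\iota)_{*}b_{0}$ and applying the projection formula to $\ovninfty\iota\times\id$, the cap product $c_{z^{-1}}(\Theta_{\Qfr})\cap(b\boxtimes a)$ depends only on the pullback $(\ovninfty\iota\times\id)^{*}\Theta_{\Qfr}$. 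Because $\ovninfty\iota$ sends a representation of $Q$ to the representation of $\Qfr$ with the zero object at $\infty$, one has $(\ovninfty\iota)^{*}\crU_{\infty}=0$, so the correction term pulls back to $0$ and $c_{z^{-1}}(\Theta_{\Qfr})$ acts on $b\boxtimes a$ exactly as $c_{z^{-1}}(\Theta'_{\Qfr})$ does. Combining the three comparisons identifies $Y(b,z)a$ in the two structures and yields $\widehat{\ad}'(a)(\ov b)=-b'_{(0)}a=-b_{(0)}a=\widehat{\ad}(a)(\ov b)$.

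The step I expect to require the most care is precisely the vanishing $(\ovninfty\iota)^{*}\crU_{\infty}=0$ together with the projection-formula reduction: this is what upgrades the purely numerical fact $\beta_{\infty}=0$ to the geometric statement that the universal complex at $\infty$ is the genuine zero object over the support of $b$, which is what annihilates the correction term rather than merely making it rank zero. Everything else reduces to bookkeeping with \eqref{eq:comparingforms} and \eqref{eq: joycefields}, and no non-degeneracy of $\ovninfty\chi_{\sym}$ is needed, so the argument applies verbatim in the possibly degenerate situation of Remark \ref{rem:rigidvsnonrigid}.
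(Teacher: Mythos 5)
Your argument is correct and is essentially the paper's own proof: both rest on the fact that $\Theta_{\Qfr}$ and $\Theta'_{\Qfr}$ differ only by a $\crU_{\infty}$-correction which vanishes on the component carrying (a lift of) $\ov{b}$, combined with the comparison of pairings \eqref{eq:comparingforms} applied with $\beta_{\infty}=0$; if anything, your choice to compute $Y(b,z)a$ directly is marginally cleaner than the paper's computation of $Y(a,z)b$, which implicitly invokes skew-symmetry \eqref{eq:skew} (together with the common translation operator) to return to $b_{(0)}a$. One harmless slip: since a cone subtracts in K-theory, the identity should read $[\Theta_{\Qfr}]=[\Theta'_{\Qfr}]-2\,[\crU_{\infty}\boxtimes\crU_{\infty}^{\vee}]$, but because the correction term restricts to the zero object (so its total Chern class restricts to $1$), the sign plays no role in the conclusion.
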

\begin{proof}
    From the assumptions, it follows that $\ov{b}\in e^{\ov{d}}\otimes \TT_{\Lambda}\subset e^{(0,\ov{d})}\otimes \TT_{\ovninfty\Lambda}$ which implies that there is a lift $b$ such that
    $$
    b\in H_{\bullet}(\mM_{\ov{d}}) \subset H_{\bullet}(\mM_{(0,\ov{d})})
    $$
where I used the inclusion $\mM_{\ov{d}}\hookrightarrow \mM_{(0,\ov{d})}$.
     From \eqref{eq:extcomplex} and \eqref{eq:ext}, one sees that $$\Ext_{\ovninfty Q}|_{\mM_{\Qfr}\times \mM_{\ov{d}}} = \Ext'_{\ovninfty Q}|_{\mM_{\Qfr}\times \mM_{\ov{d}}}\,.$$
    Together with \eqref{eq: joycefields} and the comparison of pairings from \eqref{eq:comparingforms}, this implies that 
    $$
    Y(a,z)b = Y'(a,z)b\,.
    $$
\end{proof}
This is all that I require to be able to work directly with $V^{\fr}_{\bullet}$ just like we did in \cite{BML}. This time, however, I want to accommodate both points of view discussed in Remark \ref{rem:Lierigid} which explains the previously mysteriously distinguished pair Virasoro constraints.

 By the arguments in the second half of the proof of \cite[Theorem 4.12]{BML}, there is the identification
    \begin{equation}
    \label{eq:T=Rdual}
    (\bR^{\fr}_{-1})^{\dagger} = T: e^{\alpha}\otimes \TT_{\Lambda^{ \fr}}\longrightarrow e^{\alpha}\otimes \TT_{\Lambda^{ \fr}}\,,
    \end{equation}
    for any $\alpha\in \Lambda^{\fr}$. In particular, the paring descends to
    $$
    \langle -,-\rangle : \TT^{Q^{\fr}}_{\inv,\alpha}\times e^{\alpha}\otimes \widecheck{\TT}_{\Lambda^{\fr}}\to \CC
    $$
    which makes $\TT^{Q^{\fr}}_{\inv, \alpha}$ into the dual of  $ \widecheck{\TT}_{\Lambda^{\fr}}$. Lastly, the  weight zero operator from Definition \ref{def:weight0} constructed out of $\bL^{\fr}_k$ will be denoted by $\bL^{\fr}_{\inv}$.
\begin{definition}
\label{def:abstractvir}
  When working in the geometric setting using the isomorphisms \eqref{eq:xi} and \eqref{eq:xidagger}, I will write 
    $$
    \int_a \tau = \langle \tau, a \rangle \,,\quad \textnormal{whenever} \ a\in e^{\alpha}\otimes \TT_{\Lambda^{\fr}}\,, \tau\in \TT^{Q^{\fr}}_{\alpha}\,.  
    $$
    If $\ov{a}\in e^{\alpha}\otimes \widecheck{\TT}_{\Lambda^{\fr}}$ instead, then the following is still well-defined:
    $$
    \int_{\ov{a}}\tau = \langle \tau,\ov{a}\rangle \,, \quad \textnormal{whenever} \ \tau \in \TT^{Q^{\fr}}_{\inv, \alpha}\,.
    $$
 This way, one can define Virasoro constraints for any $\ov{a}\in e^{\alpha}\otimes \widecheck{\TT}_{\Lambda^{\fr}}$ as the condition
    $$
    \int_{\ov{a}}  \bL^{\fr}_{\inv}(\tau) = 0\,, \quad \textnormal{for any} \ \tau\in \TT^{Q^{\fr}}_{\alpha}\,.
    $$
\end{definition}
Next, I complete the alternative point of view presented in remarks \ref{rem:rigidvsnonrigid} and \ref{rem:Lierigid}. If I continued working with the corrected vertex algebra $\ovninfty V_{\bullet}$ without introducing $Q^{\fr}$, I could instead use $(\Lambda^{\nd},\chi^{\nd}_{\sym})$. Denote by $V^{\nd}_{\bullet}$ the corresponding lattice vertex operator algebra with the conformal element $\omega^{\nd}$ and the resulting Virasoro operators $L^{\nd}_k$.  There are then two Virasoro algebras acting on the vector space $\ovninfty V_{\bullet}$:
\begin{enumerate}
    \item one generated by the operators $\ovninfty L'_k$ which are the restrictions of $L^{\fr}_k$ under the inclusion $\ovninfty V'_{\bullet}\hookrightarrow V^{\fr}_{\bullet}$,
    \item another one generated by $\ovninfty L_k$ obtained by restricting $L^{\nd}_k$ under the inclusion $\ovninfty V_{\bullet}\hookrightarrow V^{\nd}_{\bullet}$.
\end{enumerate}
 To distinguish between the two cases, I will write $\ovninfty P'_i$ and $\ovninfty P_i$ to denote the two subspaces of physical states of conformal weight $i$ defined in terms of $\ovninfty L'_k$ and $\ovninfty L_k$ respectively.
 
Applying Definition \ref{def:virop} to $\Qfr$, one constructs the operators ${}^{\ov{n}}_{\infty}\bL_k$, $\ovninfty \bL_{\inv}$ acting on $\TT^{\Qfr}_{(d_{\infty},\ov{d})}$. They are compatible with $\bL^{\fr}_k, \bL^{\fr}_{\inv}$ under the pullback along $\iota^{\fr}: \mM_{\Qfr}\to \mM_{Q^{\fr}}$.

The next lemma elucidates where the additional $\delta_k$ term in $\ovninfty \bT_k$ comes from and relates all the operators present on $\ovninfty V^{\bullet}$.

\begin{lemma}
\label{lem:dashnodash}
    Suppose that $a\in e^{(1,\ov{d})}\otimes \TT_{\ovninfty \Lambda}$ satisfies $\tau_i(\infty)\cap a = 0$ for $i> 0$, then 
\begin{align*}
\label{eq:comparingLk}
\ovninfty L_k(a) &= \big(\ovninfty L'_k+\delta_k\big)(a)\,,\\
(\ovninfty \bL_k)^{\dagger}(a) &= \ovninfty L_k(a)\quad \textnormal{for} \quad k\geq -1\,.
\numberthis
\end{align*}
The above implies for any $i\in \ZZ$ that 
\begin{equation}
\label{eq:comparingPi}
a\in P_{i+1} \quad \iff \quad a\in P'_i\,.
\end{equation}
\end{lemma}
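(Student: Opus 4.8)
The plan is to isolate a single operator identity, on the common underlying space $H_\bullet(\mM_{\Qfr})$ that carries both Virasoro representations, and then to obtain both displayed equations and the equivalence \eqref{eq:comparingPi} by evaluating it on $a$. The starting point is that $\ovninfty L_k$ and $\ovninfty L'_k$ arise from the two ambient non-degenerate lattices $(\Lambda^{\nd},\chi^{\nd}_{\sym})$ and $(\Lambda^{\fr},\chi^{\fr}_{\sym})$, whose restrictions to $\ovninfty\Lambda$ are $\ovninfty\chi_{\sym}$ and $\ovninfty\chi'_{\sym}$ respectively.

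First I would establish the key comparison
$$
\ovninfty L_k-\ovninfty L'_k=\sum_{i+j=k}i!\,j!\,\big(\tau_i(\infty)\tau_j(\infty)\big)\cap \,.
$$
This splits into the $\bR$- and $\bT$-parts of the Virasoro operators as in the proof of Theorem \ref{thm:twovirasoro}. Since $\bR_k$ in Definition \ref{def:virop} is independent of the pairing, the two $\bR$-contributions agree; the $\bT$-contributions are governed by the respective symmetrized Euler forms, so their difference is controlled by $\ovninfty\chi_{\sym}-\ovninfty\chi'_{\sym}=2c_\infty d_\infty$ from \eqref{eq:comparingforms}, whose only nonzero entry sits in the $\infty$--$\infty$ slot and therefore produces exactly the displayed $\tau(\infty)\tau(\infty)$ term.

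Next I evaluate this identity on $a$. Using $\tau_i(\infty)\cap a=0$ for $i>0$, every summand with $i>0$ or $j>0$ dies, and the surviving $i=j=0$ term contributes $\tau_0(\infty)^2\cap a=d_\infty^2\,a=a$, which occurs only when $k=0$. Hence $\ovninfty L_k(a)-\ovninfty L'_k(a)=\delta_k\,a$, the first identity in \eqref{eq:comparingLk}. For the second, I restrict Theorem \ref{thm:twovirasoro} (applied to $Q^{\fr}$) along $\iota^{\fr}$, which --- using that $\ovninfty\bL_k$ is compatible with $\bL^{\fr}_k$ and that $\ovninfty L'_k$ is the restriction of $L^{\fr}_k$ --- yields $(\ovninfty\bL_k)^{\dagger}=\ovninfty L'_k+\delta_k$; combining with the first identity gives $(\ovninfty\bL_k)^{\dagger}(a)=\ovninfty L'_k(a)+\delta_k a=\ovninfty L_k(a)$.

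Finally I deduce \eqref{eq:comparingPi} from the first identity. Taking $k=0$ gives $\ovninfty L_0(a)=\ovninfty L'_0(a)+a$, while $k\geq 1$ gives $\ovninfty L_k(a)=\ovninfty L'_k(a)$; so the conditions $\ovninfty L_0(a)=(i+1)a$, $\ovninfty L_n(a)=0$ for $n\geq1$ defining $a\in P_{i+1}$ are equivalent to $\ovninfty L'_0(a)=ia$, $\ovninfty L'_n(a)=0$ for $n\geq1$, that is $a\in P'_i$. The one genuinely delicate point --- and where I expect the main effort to go --- is the justification of the key comparison: one must check that $\ovninfty L_k$, obtained by restricting $L^{\nd}_k$ from the auxiliary lattice $\Lambda^{\nd}$, depends on $\chi^{\nd}_{\sym}$ only through its restriction $\ovninfty\chi_{\sym}$ when acting on states of $\ovninfty V_\bullet$, so that the auxiliary directions of $\Lambda^{\nd}$ decouple. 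This is precisely the bookkeeping carried out in the proof of Theorem \ref{thm:twovirasoro}, now with the additional care of keeping the competing rigidified and non-rigidified conventions straight.
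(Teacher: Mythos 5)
Your proposal is correct and follows essentially the same route as the paper: the $\bR$-parts agree, the $\bT$-parts are compared through the explicit quadratic formula governed by the symmetrized pairings, so that the discrepancy $\ovninfty\chi_{\sym}-\ovninfty\chi'_{\sym}=2c_\infty d_\infty$ from \eqref{eq:comparingforms} produces exactly the $\sum_{i+j=k}i!\,j!\,\tau_i(\infty)\tau_j(\infty)$ correction, which the hypothesis $\tau_i(\infty)\cap a=0$ (together with $d_\infty=1$) reduces to $\delta_k\,a$; the second identity then follows from Theorem \ref{thm:twovirasoro} via the compatibility along $\iota^{\fr}$, just as in the paper. The "delicate point" you flag—that $L^{\nd}_k$ acting on $\ovninfty V_\bullet$ sees $\chi^{\nd}_{\sym}$ only through its restriction because the auxiliary descendent directions act by zero—is precisely the step the paper handles by setting $\tau_l(v^{\nd})=0$ for $v^{\nd}\notin\ovninfty V$.
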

\begin{proof}
Applying the arguments in the second half of the proof of \cite[Theorem 4.12]{BML}, one can show that the actions of  $\ovninfty R_k$ and $\ovninfty R'_k$ on polynomials in $v_{k}$ for $v\in \ovninfty V$ are equal. Thus I only need to compare the two different $T_k$ operators. The same computation as in the proof of Theorem \ref{thm:twovirasoro} leads to 
$$
T^{\nd}_k = \frac{1}{2}\sum_{v^{\nd},w^{\nd}\in V^{\nd}}\sum_{\begin{subarray}{c} i+j=k,\\i,j\geq 0\end{subarray}}i!j!\chi^{\nd}_{\sym}(v^{\nd},w^{\nd})\tau_i(v^{\nd})\tau_j(w^{\nd})
$$
where $V^{\nd}\subset \Lambda^{\nd}\otimes_{\ZZ}\CC$ is a complex basis containing $\ovninfty V$. The first equality of \eqref{eq:comparingLk} then follows from setting $\tau_l(v^{\nd}) = 0$ for all $v^{\nd}\notin \ovninfty V$ and $l\geq 0$ while also using the assumption on $a$. Because of \eqref{eq:comparingLk} and Theorem \ref{thm:twovirasoro}, one sees that the second equality also holds.   
\end{proof}
In particular, it is the vertex algebra $\ovninfty V_{\bullet}$ that produces the correct Virasoro algebra dual to the one from Definition \ref{def:virop}. Note also that the restrictions of $\ovninfty L_k$ and $\ovninfty L'_k$ to $V_\bullet$ are equal.

I will henceforth work mainly with $\ovninfty L_k, \ovninfty \bL_k$ and the associated weight zero operator 
$\ovninfty\bL_{\inv}
$. Simultaneously, I keep the alternative approach using $\omega^{\fr}$  in mind to give a more geometric point of view. To do so, I use the dashed notation as explained in \eqref{eq:dashednot}.

Note that part (ii) has already been proved in \cite[Lemma 5.11 b)]{BML}, while part (iii) generalizes the observation from \cite[Lemma 5.11 a)]{BML}.
\begin{proposition}
\label{cor:PMimpliesadPM}
\begin{enumerate}[label=\roman*)]
Suppose that $a\in e^{\ov{c}_{\infty}}\otimes \TT_{\ovninfty \Lambda}$ and $\ov{b}\in e^{\ov{d}_{\infty}}\otimes \widecheck{\TT}_{\ovninfty \Lambda}$.
    \item If they satisfy 
   $$
    \int_{a} \ovninfty\bL_{\inv}(\tau_1) = 0 =  \int_{\ov{b}} \ovninfty\bL_{\inv}(\tau_2) \quad \textnormal{for any}\quad \tau_1\in \TT^{\Qfr}_{\ov{c}_{\infty}},\tau_2\in \TT^{\Qfr}_{\ov{d}_{\infty}}\,,
   $$
   then 
  $$
     \int_{\widehat{\ad}(a)(\ov{b})}  \ovninfty\bL_{\inv}(\tau) =0\quad \textnormal{for any} \quad \tau\in \TT^{\Qfr}_{\ov{c}_{\infty}+\ov{d}_{\infty}}\,.
$$
\item If $\ov{a} = \ov{b}$ for a different lift $b\in \ovninfty V_{\bullet}$ and both satisfy $$\tau_1(\infty)\cap a = 0 = \tau_1(\infty)\cap b\,,$$
    then $a = b$.
    \item If $\tau_k(\infty)\cap a= 0$ holds for a fixed $k\geq 1$ then 
    $$
    \tau_k(\infty)\cap \widehat{\ad}(a)(\ov{b}) = 0 \quad \textnormal{whenever} \quad \ov{b}\in \widecheck{V}_{\bullet}\subset {}^{\ov{n}}_{\infty}\widecheck{V}_{\bullet}\,.
    $$
    \item If additionally  $a\in e^{(1,\ov{c})}\otimes \TT_{\ovninfty \Lambda}$ satisfies $\tau_i(\infty)\cap a = 0$ for all $i>0$, and $\ov{b}$ lies in $e^{\ov{d}}\otimes \widecheck{\TT}_{\Lambda}\subset e^{(0,\ov{d})}\otimes \widecheck{\TT}_{\ovninfty \Lambda}$, and
     \begin{align*}
    \int_{a} \ovninfty\bL'_k(\tau_1) &= 0 \quad \textnormal{for any}\quad \tau_1\in \TT^{\Qfr}_{(1,\ov{c})},  k\geq 0\,,\\  \int_{\ov{b}} \bL_{\inv}(\tau_2)&=0\quad \textnormal{for any} \quad \tau_2\in \TT^{Q}_{\ov{d}}\,,
   \end{align*}
   then 
   $$
     \int_{\widehat{\ad}(a)(\ov{b})} \ovninfty\bL'_k(\tau)  =0\quad \textnormal{for any} \quad \tau\in \TT^{\Qfr}_{(1,\ov{c}+\ov{d})}, k\geq 0\,.
$$
\end{enumerate}
  
\end{proposition}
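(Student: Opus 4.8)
The plan is to read all three integral conditions as physical-state memberships in the framed vertex operator algebra $(V^{\fr}_\bullet,\omega^{\fr})$ of Lemma \ref{lem:Vfrnondegenerate}, where $\ovninfty L'_k = L^{\fr}_k|_{\ovninfty V'_\bullet}$ and where $\widehat{\ad}$ preserves physical states, and then to transport the conclusion back. Throughout I write $c = \widehat{\ad}(a)(\ov b) = -b_{(0)}a$ for a lift $b\in e^{\ov d}\otimes\TT_\Lambda\subset\ovninfty V'_\bullet$ of $\ov b$; by Lemma \ref{lem:sameLie} this element is unambiguous (dashed and undashed partial lifts agree), and via the vertex-algebra embedding $\iota^{\fr}_*\colon \ovninfty V'_\bullet\hookrightarrow V^{\fr}_\bullet$ both $c$ and all relevant operators may be computed inside $V^{\fr}_\bullet$.

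First I establish a \emph{duality dictionary}. For any $w\in e^{(1,\ov e)}\otimes\TT_{\ovninfty\Lambda}$ with $\tau_i(\infty)\cap w = 0$ for all $i>0$, the relation $\ovninfty\bL'_k = \ovninfty\bL_k - \delta_k$ (from \eqref{eq:dashednot}, since $\Qfr$ has $|F|=1$) together with Lemma \ref{lem:dashnodash} gives $(\ovninfty\bL'_k)^\dagger(w) = (\ovninfty\bL_k)^\dagger(w) - \delta_k w = \ovninfty L_k(w) - \delta_k w = \ovninfty L'_k(w)$. As the pairing \eqref{eq:onvinfpairing} is perfect in each graded degree, the identity $\int_w\ovninfty\bL'_k(\tau) = \langle\tau,\ovninfty L'_k(w)\rangle$ shows that $\int_w\ovninfty\bL'_k(\tau)$ vanishes for all $\tau$ and all $k\geq 0$ exactly when $\ovninfty L'_k(w)=0$ for every $k\geq 0$, i.e. $w\in\ovninfty P'_0 = \ovninfty V'_\bullet\cap P^{\fr}_0$. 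Applied to $w=a$, the first hypothesis becomes $a\in P^{\fr}_0$.

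For the second hypothesis, dualizing $\bL_{\inv} = \sum_{j\geq -1}\frac{(-1)^j}{(j+1)!}\bL_j\bR_{-1}^{j+1}$ with $(\bR_{-1})^\dagger = T$ (a restriction of \eqref{eq:T=Rdual}) and $(\bL_j)^\dagger = L_j$, the restriction of $L^{\fr}_j$ to $V_{Q,\bullet}$ (Theorem \ref{thm:twovirasoro}), yields $(\bL_{\inv})^\dagger(b) = \sum_{j\geq -1}\frac{(-1)^j}{(j+1)!}T^{j+1}L^{\fr}_j(b)$. Thus $\int_{\ov b}\bL_{\inv}(\tau_2)=0$ for all $\tau_2$ is precisely the condition defining $\widecheck K_0$ in Proposition \ref{prop:combinedBLM} for $(V^{\fr}_\bullet,\omega^{\fr})$, and since $\ov b\in e^{\ov d}\otimes\widecheck\TT_\Lambda$ with $\ov d\neq 0$, that proposition upgrades it to $\ov b\in\widecheck P^{\fr}_0$. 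Now both $a\in P^{\fr}_0$ and $\ov b\in\widecheck P^{\fr}_0$ live in the honest vertex operator algebra $(V^{\fr}_\bullet,\omega^{\fr})$, so the restriction $\widehat{\ad}\colon P^{\fr}_0\to\Hom(\widecheck P^{\fr}_0,P^{\fr}_0)$ of Definition \ref{Def: physicalstates} gives $c\in P^{\fr}_0$; because $a,b\in\ovninfty V'_\bullet$ and this subspace is closed under the $(0)$-product, in fact $c\in\ovninfty P'_0$, i.e. $\ovninfty L'_k(c)=0$ for all $k\geq 0$. Finally, part (iii) applied with each $i=k>0$ (its hypotheses hold as $\tau_i(\infty)\cap a=0$ and $\ov b\in\widecheck V_\bullet\subset\ovninfty\widecheck V_\bullet$) gives $\tau_i(\infty)\cap c=0$ for all $i>0$, so the duality dictionary applies to $w=c$ and produces $\int_c\ovninfty\bL'_k(\tau)=\langle\tau,\ovninfty L'_k(c)\rangle=0$ for all $\tau\in\TT^{\Qfr}_{(1,\ov c+\ov d)}$ and $k\geq 0$, as claimed.

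The main obstacle is organizing the two independent shifts that the framed/dashed formalism forces upon us: the additive $\delta_k$ correction between $\ovninfty\bL'_k$ and $\ovninfty\bL_k$, which Lemma \ref{lem:dashnodash} absorbs cleanly, and the conformal-weight shift $P_1\leftrightarrow P'_0$ that separates the $d_\infty=1$ class $a$ from the $d_\infty=0$ class $\ov b$. The delicate point is to arrange $a$ and $\ov b$ as genuine physical states of \emph{the same} algebra $V^{\fr}_\bullet$ of the correct weights, and to guarantee that the normalization $\tau_i(\infty)\cap(-)=0$ survives the bracket so that the dual-operator identity applies to $c$ as well as to $a$; this survival is exactly what part (iii) supplies. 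Everything else is formal transport along the vertex-algebra embeddings of Lemmas \ref{lem:Vfrnondegenerate} and \ref{lem:sameLie}.
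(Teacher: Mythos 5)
Your proposal proves only part (iv) of the proposition, which has four claims, all of which are used later in the paper (part (i) is what Theorem \ref{thm:quivvirWC} rests on, parts (ii) and (iii) are needed in the proof of Theorem \ref{thm:WCformulae}, and part (iv) in Theorem \ref{thm:main}). Parts (i) and (ii) are never addressed, and part (iii) enters your argument as a black box at the one point where it is indispensable: you need $\tau_i(\infty)\cap \widehat{\ad}(a)(\ov{b})=0$ for all $i>0$ in order to apply your duality dictionary (i.e.\ Lemma \ref{lem:dashnodash}) to $c=\widehat{\ad}(a)(\ov{b})$ and convert $c\in P'_0$ back into the vanishing of the integrals $\int_c \ovninfty\bL'_k(\tau)$. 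Part (iii) is not a formality one can defer to: the paper proves it by using non-degeneracy of $\chi^{\fr}$ to produce $u\in \Lambda^{\fr}_{\CC}$ with $u_{(k)}=k!\,\tau_k(\infty)\cap$, and then combining the Jacobi-type identity \eqref{eq:jacobi} with skew-symmetry \eqref{eq:skew} and $b\in V_{\bullet}$ to get $u_{(k)}(b_{(0)}a)=b_{(0)}(u_{(k)}a)-(b_{(0)}u)_{(k)}a=0$. Similarly, part (i) cannot be obtained by simply rerunning your $\omega^{\fr}$-dictionary, since for general classes $\ov{c}_{\infty},\ov{d}_{\infty}$ the normalization hypotheses of Lemma \ref{lem:dashnodash} are unavailable; the paper instead works with $\omega^{\nd}$ and the subspace $\widecheck{K}_0$ of Proposition \ref{prop:combinedBLM}, which is closed under the Lie bracket without any non-vanishing assumption on the classes. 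Part (ii) is a separate uniqueness statement (quoted from \cite[Lemma 5.11 b)]{BML}). So, taken as a proof of the proposition, the proposal has a genuine gap: one of four claims is established, and that one only conditionally on another unproved claim.

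For part (iv) itself, your argument is correct and is essentially the paper's own second (``alternative'') proof of that part: the paper likewise notes that the hypothesis on $a$ is equivalent to $a\in P'_0$, that the hypothesis on $\ov{b}$ places it in $\widecheck{P}'_0$, and that the restriction of $\widehat{\ad}$ to physical states from Definition \ref{Def: physicalstates} then yields $\widehat{\ad}(a)(\ov{b})\in P'_0$, with part (iii) closing the loop back to integrals. Your explicit bookkeeping — the cancellation of the $\delta_k$ shift between \eqref{eq:dashednot} and Lemma \ref{lem:dashnodash}, the use of Theorem \ref{thm:twovirasoro} and \eqref{eq:T=Rdual} to identify $(\bL_{\inv})^{\dagger}$ with the expression in Proposition \ref{prop:combinedBLM}, and the appeal to Lemma \ref{lem:sameLie} so that the bracket computed in $V^{\fr}_{\bullet}$ agrees with the $\widehat{\ad}$ of the statement — makes explicit several steps the paper leaves implicit, and is a genuine (if modest) improvement in readability of that one part.
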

\begin{proof}
   (i) I will illustrate how the previously discussed results can be combined to prove this. 
  By \eqref{eq:comparingLk} together with dualizing the formula for the weight zero operator from \ref{def:weight0}, one obtains
    $$
    \sum_{j\geq -1}\frac{(-1)^j}{(j+1)!}T^{j+1}\circ L^{\nd}_j (c) = 0 \quad  \textnormal{for} \quad c= a,b \,.
    $$
By Proposition \ref{prop:combinedBLM}, this is equivalent to $\ov{a},\ov{b}\in \widecheck{K}_0$ with respect to $\omega^{\nd}$ so it implies
    $$
    \big[\ov{a},\ov{b}\big]\in \widecheck{K}_0\,.
    $$
   Therefore, the lift $\widehat{\ad}(a)(\ov{b})$ of this Lie bracket also satisfies Virasoro constraints. 
   
   (ii) The proof is identical to the one in \cite[Lemma 5.11 b)]{BML}. 
   
    (iii) Here, one need to work with $V^{\fr}_{\bullet}$ or $V^{\nd}_{\bullet}$ instead of ${}^{\ov{n}}_{\infty}V_{\bullet}$. To be more explicit, I choose to phrase everything in terms of $Q^{\fr}$. Consider the image of $\infty$ in $\Lambda^{\fr}_{\CC}$ under the map ${}^{\ov{n}}_{\infty}\Lambda_{\CC}\to \Lambda^{\fr}_{\CC}$ induced by Lemma \ref{lem:Vfrnondegenerate} and denote it also by $\infty$. 
    Then because $\chi^{\fr}$ is non-degenerate, there is a $u\in \Lambda^{\fr}_{\CC}$ such that $\chi^{\fr}(u,x) = \langle \infty, x\rangle$ for any other $x\in \Lambda^{\fr}_{\CC}$. This implies $u_{(k)}=k!\tau_k(\infty)\cap$ by \eqref{eq:fieldastau} so I just need to show that 
    $$
   u_{(k)}(b_{(0)}a) = b_{(0)}(u_{(k)}a) - (b_{(0)}u)_{(k)}a = 0
    $$
    where the first equality uses \eqref{eq:jacobi}. The last step uses that $u_{(k)}a =0$ by assumption and $b_{(0)}u =0$ as a consequence of \eqref{eq:skew} together with $b\in V_{\bullet}$. 
    
   (iv) Note that the proof of \cite[Proposition 2.16]{BML} comparing Virasoro constraints from Definition \ref{def:Skvirasoro} and Definition \ref{def:weight0} is formal and only requires the vanishing of $\tau_1(\infty)\cap$  acting on the (virtual) fundamental class. By the assumption on $a$ and the definition of $\bS^{v}_k$, I know that 
$$\int_{a}\ovninfty\bS^{\infty}_k(\tau_1) =-\delta_k\int_{a}\tau_1$$
   which implies that the vanishing of the integrals in the assumption for $a$ is equivalent to
$$
 \int_{a} \big(\ovninfty\bL_k+\ovninfty S^{\infty}_k)(\tau_1) = 0 \quad \text{for} \quad \tau_1\in \TT^{\Qfr}_{(1,\ov{c})}, k\geq 0\,.
$$
This, in turn, is equivalent to
   $$ \int_{a} \ovninfty\bL_{\inv}(\tau_1) = 0\quad \textnormal{for} \quad \tau_1\in \TT^{\Qfr}_{(1,\ov{c})}\,.$$
Applying (i), I conclude that $\widehat{\ad}(a)(\ov{b})$ satisfies weight zero Virasoro constraints so by (iii) and reversing the argument I just presented for $a$, the claim is proved.
   
   Alternatively, the vanishing of the integrals for $a$ is equivalent to $a\in P'_0$. Since $\ov{b}$ is an element of both $\widecheck{P}_0$ and $\widecheck{P}'_0$, one concludes that
$$
\widehat{\ad}(a)(\ov{b})\in P'_0
$$
which again shows that the claim holds. \end{proof}
The final statement of the proposition mimics the original formulation of the compatibility of pair Virasoro constraints under wall-crossing shown in \cite{BML}.
\subsection{Reducing Virasoro constraints to point moduli spaces}
\label{sec:reductiontop}
I now give a brief review of wall-crossing for quiver representations in the form it was proved in \cite[§6]{Jo17} and \cite[§5, §6]{GJT}. I will continue working with $Q$ that has no frozen vertices and whose path algebra $\mA_Q$ is finite-dimensional.  I will cover the cases with $F\neq \emptyset$ by using Definition \ref{def:onefrozen} to reduce them to working with $\ovninfty Q$.
To make formulae less cluttered with symbols, I will not specify $Q$ in the subscript of $M^\mu_{Q,\ov{d}}$.

Fix a vector of slope parameters $\ov{\mu}\in \RR^{Q}$ and assume that there are no strictly $\mu$-semistables with dimension vector $\ov{d}$, then there are open embeddings
\begin{equation}
\label{eq:openembed}
\begin{tikzcd}
&\MQ^{\rig}\\
M^{\mu}_{\ov{d}}\arrow[r,hookrightarrow]\arrow[ru,hookrightarrow]&\Mf^{\rig}_Q\arrow[u,hookrightarrow]
\end{tikzcd}\,.
\end{equation}
 Starting from the (virtual) fundamental classes $[M^{\mu}_{\ov{d}}]\in H_{-\chi(\ov{d},\ov{d})}(M^{\mu}_{\ov{d}})$, I denote both of their pushforwards along the above open embeddings by 
\begin{equation}
\label{eq:barMclass}
\ov{[M^{\mu}_{\ov{d}}]}\in \widecheck{H}_{0}(\Mf^{\rig}_{Q}), \widecheck{H}_{0}(\MQ^{\rig})\,.\footnote{The definition of $\widecheck{H}_{\bullet}(\Mf^{\rig}_{Q})$ is in terms of the same shift on connected components as for $\widecheck{H}_{\bullet}(\MQ^{\rig})$.}
\end{equation}
The isomorphism \eqref{eq:VcheckHcheck} leads then to a class $\ov{[M^{\mu}_{\ov{d}}]}\in e^{\ov{d}}\otimes \widecheck{\TT}_{Q}$. 

\begin{remark}
\label{rem:relatingtoGW}
  Instead of using $v_{-k}$ defined by \eqref{eq:tauder}, I will change for the moment to the variables $t_{k,v}$ which satisfy 
 $$
 t_{k,v} = (k-1)!v_{-k}\,.
 $$
If I choose a lift $[M^{\mu}_{\ov{d}}]\in V_{\bullet}$ representing $\ov{[M^{\mu}_{\ov{d}}]}$, as can be done for example by a choice of a universal representation $(\ov{\UU},\ov{\Ff})$, then I can phrase the data of $[M^{\mu}_{\ov{d}}]$ in a form reminiscent of the Gromov-Witten superpotential. Using the notation
$$
  \big\langle \tau_{k_1}(v_1)\cdots \tau_{k_n}(v_n)\big\rangle^{\mu}_{\ov{d}} = \int_{[M^{\mu}_{\ov{d}}]} \tau_{k_1}(v_1)\cdots \tau_{k_n}(v_n)\,,
$$
one may expand the class as 
$$
 [M^{\mu}_{\ov{d}}] = \Big\langle \exp \Big[\sum_{k,v} \tau_k(v)t_{k,v}\Big]\Big\rangle^{\mu}_{\ov{d}}\,.
$$
The Virasoro operators from Theorem \ref{thm:twovirasoro} once restricted from $V^{\fr}_{\bullet}$ to $V_{\bullet}$ become differential operators $L_k$ quadratic in the derivatives with respect to the variables $t_{k,v}$. The condition for Virasoro constraints from \ref{def:weight0} or \ref{def:Skvirasoro}  to hold is equivalent to the existence of a lift $[M^{\mu}_{\ov{d}}]$ such that
$$
L_k \ [M^{\mu}_{\ov{d}}] = \delta_{k} \ [M^{\mu}_{\ov{d}}]\quad\textnormal{for}\quad k\geq 0\,.
$$
I mentioned this formulation to draw a parallel with the standard Virasoro constraints for Gromov-Witten potentials as conjectured by Eguchi--Hori--Xiong  \cite{ehx} and Katz (noted down in \cite{EJX}, See also the review article of Getzler \cite{getzler}). It is a recommendable exercise to restate \cite{BML} in this language as I have done in \cite{mytalk}.
\end{remark}

When working with $\Qfr$, one additionally needs to specify $\mu_{\infty}$ to get a stability condition out of $$\ovninfty\mu=(\mu_{\infty},\ov{\mu})\,.$$ 
I then work with the moduli schemes $M^{^{\ov{n}}_\infty \mu}_{(1,\ov{d})}$ when there are no strictly semistables. In this case, the diagram of open embeddings \eqref{eq:openembed} can be lifted to 
\begin{equation}
\label{eq:liftinc}
\begin{tikzcd}
&\mM_{\Qfr}\\
M^{\mufr}_{(1,\ov{d})}\arrow[r,hookrightarrow]\arrow[ru,hookrightarrow]&\Mf_{\Qfr}\arrow[u,hookrightarrow]
\end{tikzcd}\,,
\end{equation}
so the (virtual) fundamental class $\Big[M^{\mufr}_{(1,\ov{d})}\Big]\in H_{\bullet}\Big(M^{\mufr}_{(1,\ov{d})}\Big)$ can be pushed forward to (note that the absence of the dash is important for the specified degree to be correct)
\begin{equation}
\label{eq:Mfrclass}
\Big[M^{\mufr}_{(1,\ov{d})}\Big]\in\widehat{H}_2(\Mf_{\Qfr}),\widehat{H}_2(\mM_{\Qfr})= {}^{\ov{n}}_{\infty}V_2\,.
\end{equation}
From the sequence of inclusions of vertex algebras in \eqref{eq:inclusionsofVA} compatible with translation operators, one can conclude the inclusions of the associated Lie algebras
$$
\begin{tikzcd}
    \widecheck{V}_{\bullet} \arrow[r,hookrightarrow]& ^{\ov{n}}_{\infty}\widecheck{V}'_{\bullet}\arrow[r,hookrightarrow]&\widecheck{V}^{\fr}_{\bullet}
\end{tikzcd}\,,\qquad \begin{tikzcd}
    \widecheck{V}_{\bullet} \arrow[r,hookrightarrow]& ^{\ov{n}}_{\infty}\widecheck{V}_{\bullet}
    \end{tikzcd} \,.
$$
Therefore, I may also work with the classes $\ov{[M^{\mu}_{\ov{d}}]}$ and $\ov{\Big[M^{\mufr}_{(1,\ov{d})}\Big]}$ as elements of $^{\ov{n}}_{\infty}\widecheck{V}_{\bullet}$ or $\widecheck{V}^{\fr}_{\bullet}$. 

Wall-crossing relates different classes $\ov{[M^{\mu}_{\ov{d}}]}$ for different choices of $\mu$-stability but can be made more general if need be as will be done in the case of Bridgeland stability conditions in §\ref{sec:P2}. For $\mu$-stability, acyclic quivers without relations and with no frozen vertices appeared in \cite[§5, §6]{GJT}, while \cite[§6]{Jo17} included non-trivial relations. To cover acyclic quivers with frozen vertices, I use Definition \ref{def:onefrozen} to reduce to a fixed quiver of the form $\Qfr$ and its dimension vectors $(d_{\infty},\ov{d})$. As it is enough to consider $d_{\infty}\leq 1$, I can use \cite[§6]{Jo17} directly because
\begin{enumerate}
    \item if $d_{\infty}=0$, then everything including wall-crossing reduces to working with $Q$ without frozen vertices,
    \item if $d_{\infty}=1$, then only classes of the form $(c_{\infty}, \ov{c})$ with $c_{\infty}\leq 1$ contribute to wall-crossing. The long list of assumptions in \cite[§5.1,§5.2]{Jo17} is satisfied by \cite[§6.4]{Jo17}. The case $c_{\infty}=0$ is treated using the previous point, and when $c_\infty=1$, there is no difference between freezing the vertex $\infty$ and rigidifying the moduli problem. 
\end{enumerate}
 To include $Q$ with finite dimensional $\mA_Q$ but containing cycles, I will only note that by the discussion at the beginning of §\ref{sec:semrepmod} and the arguments in \cite[§6.4.3]{Jo21}, one still retains projectivity of all moduli spaces that are necessary for the proof of wall-crossing. 

In the theorem below, the second statement of the theorem is phrased in terms of the operation $\widehat{\ad}$ from \eqref{eq:hatad} providing a partial lift of Joyce's wall-crossing formula. Note that unlike \cite{BML}, this is not necessary, as I could use Proposition \ref{cor:PMimpliesadPM} i) instead of iv). However, it does give strictly more information about the wall-crossing behavior of the classes.
\begin{theorem}[{\cite[Theorem 5.9, Theorem 6.16]{Jo17}}]
\label{thm:WCformulae}
Let $\ov{\mu},\ov{\nu}\in \RR^{V}$ be two different vectors of parameters used to define stabilities as in \eqref{eq:mustability} for the quiver $Q$, and $\mu_{\infty},\nu_{\infty}\in \RR$, then 
\begin{enumerate}
    \item  for each  $\ov{d}\in =\Lambda_+$, there exist uniquely defined classes $$\ov{[M^{\mu}_{\ov{d}}]}, \ov{[M^{\nu}_{\ov{d}}]}\in e^{\ov{d}}\otimes \widecheck{\TT}_{\Lambda}$$
    identified with the natural ones from \eqref{eq:barMclass} when there are no strictly semistables.
    They are related by the wall-crossing formula
    \begin{align*}
         \label{eq:quiverWC}
    \ov{[M^{\mu}_{\ov{d}}]} =\sum_{\begin{subarray}{c} (\ov{d}_i\in \Lambda_+)_{i=1}^l:\\ \sum_{i=1}^l\ov{d}_i=\ov{d}\end{subarray}}\tilde{U}(&\ov{d}_1,\ldots,\ov{d}_l;\nu,\mu) \\ &\qquad\Big[\cdots \Big[\Big[\ov{[M^{\nu}_{\ov{d}_1}]},\ov{[M^{\nu}_{\ov{d}_2}]}\Big],\ov{[M^{\nu}_{\ov{d}_3}]}\Big]\cdots,\ov{[M^{\nu}_{\ov{d}_l}]}\Big]
    \numberthis
    \end{align*}
  in $\widecheck{V}_{\bullet}$. Here $\tilde{U}(\ov{d}_1,\ldots,\ov{d}_l;\nu,\mu)$ are the coefficients defined in \cite[§3.2]{Jo21}.
    \item for each $\ov{d}\in (\NN_0)^V$, there exist uniquely defined classes $$\Big[M^{\mufr}_{(1,\ov{d})}\Big], \Big[M^{\nufr}_{(1,\ov{d})}\Big]\in e^{(1,\ov{d})}\otimes \TT_{\ovninfty \Lambda}\,,$$ that additionally satisfy
    $$
    \tau_1(\infty)\cap \Big[M^{\mufr}_{(1,\ov{d})}\Big] = 0 =  \tau_1(\infty)\cap \Big[M^{\nufr}_{(1,\ov{d})}\Big]\,.
    $$
      They are identified with the natural ones from \eqref{eq:Mfrclass} when there are no strictly semistables and fit into the following refined wall-crossing formula
    \begin{align*}
    \label{eq:pairWC}
    \Big[M^{\mufr}_{(1,\ov{d})}\Big]= 
    &\sum_{\begin{subarray}{c} \ov{d}_0\in (\NN_0)^V,(\ov{d}_i\in \Lambda_+)_{i=1}^l:\\ \sum_{i=0}^l\ov{d}_i=\ov{d}\end{subarray}} \widehat{U}((1,\ov{d}_0),\ov{d}_1,\ldots,\ov{d}_l;{}\nufr,{}\mufr)\\
    &\qquad \widehat{\ad}\Big(\cdots \widehat{\ad}\Big(\widehat{\ad}\Big(\Big[M^{\nufr}_{(1,\ov{d}_0)}\Big]\Big)\big(\ov{[M^{\nu}_{\ov{d}_1}]}\big)\Big)\big(\ov{[M^{\nu}_{\ov{d}_2}]}\big)\cdots\Big)\big(\ov{[M^{\nu}_{\ov{d}_l}]}\big)\,.
    \numberthis
\end{align*}
   in $^{\ov{n}}_{\infty}V_{\bullet}$. The coefficients $\widehat{U}((1,\ov{d}_0),\ov{d}_1,\ldots,\ov{d}_l;{}\nufr,{}\mufr)$ are obtained from \cite[§3.2]{Jo21} by reordering the entries in the (partially lifted) iterated Lie bracket so that $(1,\ov{d}_0)$ always appears first. 
\end{enumerate}
\end{theorem}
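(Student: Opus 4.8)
The plan is to deduce both statements directly from Joyce's general wall-crossing theorem \cite[Theorem 5.9]{Jo17}, specialized to quiver moduli in \cite[Theorem 6.16]{Jo17}, after arranging the setup so that all of his hypotheses hold and then tracking the invariant classes through the vertex/Lie-algebra formalism of §\ref{sec:Lie}. First I would treat the base case of an acyclic quiver $Q$ with finite-dimensional $\mA_Q$, no frozen vertices, and possibly relations. Here the classes $\ov{[M^{\mu}_{\ov{d}}]}\in e^{\ov{d}}\otimes\widecheck{\TT}_{\Lambda}$ are precisely the invariants built in \cite[§5, §6]{GJT} and \cite[§9.1]{Jo21}; their existence, uniqueness, and agreement with the pushforward \eqref{eq:barMclass} in the absence of strictly semistables are part of that construction. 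Since $\mM_Q$ carries the lattice vertex algebra of Theorem \ref{thm:geometricvaconstruction} with symmetrized form coming from the $\Ext$-complex \eqref{eq:extcomplex}, the formula \eqref{eq:quiverWC} with universal coefficients $\tilde{U}$ is then exactly \cite[Theorem 6.16]{Jo17}. The substance of this step is checking the assumptions of \cite[§5.1, §5.2]{Jo17}, which is carried out for quivers in \cite[§6.4]{Jo17}.

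Next I would reduce the frozen case $F\neq\emptyset$ to the framed quiver $\Qfr$ using Definition \ref{def:onefrozen}, so that it suffices to treat dimension vectors $(d_{\infty},\ov{d})$ with $d_{\infty}\in\{0,1\}$. When $d_{\infty}=0$ every class appearing in \eqref{eq:quiverWC} lies in $V_{\bullet}\subset\ovninfty V_{\bullet}$, and the statement reduces to the frozen-free case above. When $d_{\infty}=1$, only decompositions whose summands satisfy $c_{\infty}\leq 1$ can contribute, since the one-dimensional $\infty$-component cannot be split further; and because freezing the dimension-one vertex $\infty$ is precisely rigidification (Remark \ref{rem:frozenisrigid}), the class $[M^{\mufr}_{(1,\ov{d})}]$ of \eqref{eq:Mfrclass} sits in the non-rigidified piece $\ovninfty V_{2}$, where the hypotheses of \cite[§5.1, §5.2]{Jo17} again hold by \cite[§6.4]{Jo17}.

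For part (2) I would upgrade the identity from the quotient $\widecheck{V}_{\bullet}$ to the refined statement in $\ovninfty V_{\bullet}$ via the partial lift $\widehat{\ad}$ of \eqref{eq:hatad}. The decisive point is that at most one summand in any contributing decomposition carries $c_{\infty}=1$, so Joyce's iterated bracket can be reordered to place the unique pair class $[M^{\nufr}_{(1,\ov{d}_0)}]$ in the innermost slot; replacing the remaining outer brackets by $\widehat{\ad}$ lifts the formula canonically, with the coefficients $\widehat{U}$ obtained from the $\tilde{U}$ of \cite[§3.2]{Jo21} by exactly this reordering. The normalization $\tau_1(\infty)\cap[M^{\mufr}_{(1,\ov{d})}]=0$ holds because $\UU_{\infty}\cong\mO$ (Definition \ref{def:framedrep2}) and is preserved along the wall-crossing by Proposition \ref{cor:PMimpliesadPM}(iii). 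To allow $Q$ with cycles but finite-dimensional $\mA_Q$ I would invoke \cite[§6.4.3]{Jo21}, which retains projectivity of all moduli schemes entering the formula, so the arguments apply verbatim.

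The step I expect to be the main obstacle is the reordering in part (2): one must confirm that the partially lifted expression depends only on the quotient classes $\ov{[M^{\nu}_{\ov{d}_i}]}$ in its outer entries while retaining a genuine lift of the single pair class, and that the reordered coefficients $\widehat{U}$ really coincide with Joyce's. This is precisely the bookkeeping that forced the separation of pair and sheaf wall-crossing in \cite{BML}; the cleanest route is to derive the needed compatibility from Proposition \ref{cor:PMimpliesadPM} rather than by a direct manipulation of the $\tilde{U}$.
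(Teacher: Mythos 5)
Your proposal is correct and follows essentially the same route as the paper: part (1) is quoted directly from Joyce's wall-crossing theorems (after the assumption checks reduce frozen vertices to $\Qfr$ with $d_\infty\leq 1$ and cycles to the finite-dimensional path algebra case), and part (2) is obtained by lifting the resulting quotient formula in $\ovninfty\widecheck{V}_\bullet$ using exactly the mechanism you point to in Proposition \ref{cor:PMimpliesadPM} — uniqueness of lifts annihilated by $\tau_1(\infty)\cap$ (part ii) together with preservation of that vanishing under $\widehat{\ad}$ against classes from $\widecheck{V}_\bullet$ (part iii). The "main obstacle" you flag is resolved in the paper precisely as you suggest, via Proposition \ref{cor:PMimpliesadPM} rather than any direct manipulation of the coefficients $\tilde{U}$.
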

\begin{proof}
The first statement is the content of \cite[Theorem 5.9, Theorem 6.16]{Jo21}. Consider the situation in (2), then these theorems imply that
\begin{align*}
\label{eq:nonliftedWC}
 \ov{\Big[M^{\mufr}_{(1,\ov{d})}\Big]} =\sum_{\begin{subarray}{c} \ov{d}_0\in (\NN_0)^V,(\ov{d}_i\in \Lambda_+)_{i=1}^l:\\ \sum_{i=0}^l\ov{d}_i=\ov{d}\end{subarray}}& \widehat{U}((1,\ov{d}_0),\ov{d}_1,\ldots,\ov{d}_l;{}\nufr,{}\mufr)\\ & \Big[\cdots \Big[\Big[\ov{\Big[M^{\nufr}_{(1,\ov{d}_0)}\Big]},\ov{[M^{\nu}_{\ov{d}_1}]}\Big],\ov{[M^{\nu}_{\ov{d}_2}]}\Big]\cdots,\ov{[M^{\nu}_{\ov{d}_l}]}\Big]
 \numberthis
\end{align*}
 in $^{\ov{n}}_{\infty}\widecheck{V}_{\bullet}$.
Proposition \ref{cor:PMimpliesadPM} ii) shows that the lifts $\Big[M^{\mufr}_{(1,\ov{d})}\Big]\in \ovninfty V_{\bullet}$ are uniquely determined by the vanishing of $\tau_1(\infty)\cap$. Starting from such a lift on the right-hand side and applying Proposition \ref{cor:PMimpliesadPM} iii) repeatedly shows that the term on the left of the equality is $\Big[M^{\mufr}_{(1,\ov{d})}\Big]$.
The natural inclusions in \eqref{eq:liftinc} lead to classes in $^{\ov{n}}_{\infty}V_{\bullet}$ satisfying the vanishing condition so applying \ref{cor:PMimpliesadPM} ii) yet again shows that the lifts coincide with those obtained by pushforwards along the inclusions. 
\end{proof}

The wall-crossing formulae together with the compatibility of the (partially lifted) Lie bracket with Virasoro constraints established in Proposition \ref{cor:PMimpliesadPM} i), iv) imply the main result which was stated in Theorem \ref{thm:first} (I) for Bridgeland stability conditions instead of just slope stability.
\begin{theorem}
\label{thm:main}
Let $\ov{\mu}\in \RR^{V}, \mu_{\infty}\in \RR$, then $\Big[M^{\mufr}_{(1,\ov{d})}\Big]$, $\ov{[M^{\mu}_{\ov{d}}]}$ satisfy Virasoro constraints for any $\ov{d}$ if there exist $\ov{\nu}\in \RR^V, \nu_{\infty}\in \RR$ for which this already holds. If $Q$ is additionally acyclic, then such $(\mu_{\infty},\ov{\mu})$ always exists.

By Definition \ref{def:abstractvir}, this says that 
\begin{equation}
\label{eq:PMvirasoro}
\int_{\Big[M^{\mufr}_{(1,\ov{d})}\Big]} {}^{\ov{n}}_{\infty}\bL_{\inv}(\tau_1) =0=\int_{\ov{[M^{\mu}_{\ov{d}}}]} \bL_{\inv}(\tau_2) \quad \textnormal{for any} \quad \tau_1\in \TT^{^{\ov{n}}_{\infty}Q}, \tau_2\in \TT^{Q}\,.
\end{equation}
  Alternatively, one could write the first condition as
  \begin{equation} 
\label{eq:Pvirasoro}\int_{\Big[M^{\mufr}_{(1,\ov{d})}\Big]} {}^{\ov{n}}_{\infty}\bL'_{k}(\tau) =0 \quad \textnormal{for any} \quad \tau\in \TT^{^{\ov{n}}_{\infty}Q}, k\geq 0\,.\end{equation}

\end{theorem}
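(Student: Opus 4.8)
The plan is to prove the theorem in two stages: first the conditional \emph{propagation} statement (Virasoro constraints at one parameter vector $\ov{\nu}$ force them at every other $\ov{\mu}$), and then the \emph{base case} that for acyclic $Q$ there is a distinguished stability at which the constraints hold for free. For the propagation step I would begin by translating ``$\ov{[M^{\mu}_{\ov{d}}]}$ (resp.\ $\big[M^{\mufr}_{(1,\ov{d})}\big]$) satisfies Virasoro constraints'' into membership in the Lie algebra of physical states. Concretely, by Theorem \ref{thm:twovirasoro} and Proposition \ref{prop:combinedBLM} the weight-zero condition $\int_{\ov{a}}\Linv(\tau)=0$ is equivalent to $\ov{a}\in\widecheck{K}_0=\widecheck{P}_0$ inside the non-degenerate ambient lattice vertex operator algebra (either $V^{\fr}_\bullet$, accessed through the embedding of Lemma \ref{lem:Vfrnondegenerate}, or the abstract $V^{\nd}_\bullet$), and the $d_\infty=1$ framed condition is captured by the operators $\ovninfty\bL'_k$ via Lemma \ref{lem:dashnodash}. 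This reduces everything to stability of the physical-state condition under the operations appearing in wall-crossing.

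Next I would feed the wall-crossing formulae of Theorem \ref{thm:WCformulae} into this picture. The unframed formula \eqref{eq:quiverWC} writes $\ov{[M^{\mu}_{\ov{d}}]}$ as a $\CC$-linear combination of iterated Lie brackets of the classes $\ov{[M^{\nu}_{\ov{d}_i}]}$, and the framed formula \eqref{eq:pairWC} writes $\big[M^{\mufr}_{(1,\ov{d})}\big]$ as a combination of iterated partial lifts $\widehat{\ad}(-)(-)$ whose innermost factor is $\big[M^{\nufr}_{(1,\ov{d}_0)}\big]$. Assuming the $\nu$-classes satisfy the constraints, I would apply Proposition \ref{cor:PMimpliesadPM}(i) to the unframed brackets (projecting $\widehat{\ad}$ back to $\widecheck{V}_\bullet$) and Proposition \ref{cor:PMimpliesadPM}(iv) to the framed $\widehat{\ad}$, inducting on the number of factors; part (iii) guarantees that the hypothesis $\tau_1(\infty)\cap(-)=0$ is preserved at each step so the induction closes, while part (ii) pins down the correct lift. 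Because the Virasoro condition is linear in the class, the linear combination again satisfies it, yielding \eqref{eq:PMvirasoro} and \eqref{eq:Pvirasoro} at $\mu$.

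It then remains to exhibit, for acyclic $Q$, a parameter vector at which the constraints hold. Here I would take $\ov{\nu}$ to be the increasing $\mu$-stability of \cite[Definition 5.5, Proposition 5.6]{GJT}, for which the moduli schemes $M^{\nu}_{\ov{d}}$ and their framed analogues $M^{\nufr}_{(1,\ov{d})}$ are point-like (isolated reduced points or empty). On such a point the universal spaces $\UU_v$ are trivial, so the realization map $\xi$ annihilates every $\tau_i(v)$ with $i>0$; a degree count (the virtual dimension being $0$, so only the constant term of $\Linv(\tau)$ could contribute) then shows $\int\Linv(\tau)=0$ automatically. This establishes the base case, which the propagation step spreads to all $(\mu_\infty,\ov{\mu})$ via Definition \ref{def:onefrozen} to reduce any frozen configuration to $\Qfr$.

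The main obstacle is not a single estimate but the careful bookkeeping of the three interlocking families of operators — the unframed $\bL_k$, $\Linv$, the framed $\ovninfty\bL_k$, and the rigidified $\ovninfty\bL'_k$ with their $\delta_k$ shifts — together with verifying that the iterated $\widehat{\ad}$ in \eqref{eq:pairWC} keeps each intermediate class inside the domain where Proposition \ref{cor:PMimpliesadPM} applies, in particular preserving $\tau_1(\infty)\cap(-)=0$ so that the inductive hypotheses do not degrade. The second delicate point is the base case itself: one must ensure the increasing stability genuinely yields point-like moduli for the \emph{framed} problem as well, which is exactly where the reduction of Definition \ref{def:onefrozen} and the projectivity together with the wall-crossing assumptions supplied by \cite{GJT} carry the argument.
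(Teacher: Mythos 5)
Your proposal follows essentially the same route as the paper: translate the constraints into the physical-state/weight-zero condition, propagate them through the wall-crossing formulae of Theorem \ref{thm:WCformulae} by induction on iterated (partially lifted) Lie brackets using Proposition \ref{cor:PMimpliesadPM} i), ii), iii), iv), and anchor the induction at the increasing stability of \cite{GJT}, where the only surviving classes are the point-like ones $e^{(1,0)}\otimes 1$ and $e^{v}\otimes 1$ that satisfy the constraints trivially. The paper's proof is exactly this argument, citing \cite[Theorem 5.8 (iii)]{GJT} for the vanishing of all other invariant classes where you instead argue via triviality of the universal spaces and a degree count; the two justifications of the base case are interchangeable.
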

\begin{proof}
    I will only focus on $\Qfr$ addressing the statement for an acyclic $Q$ because the case without frozen vertices is a simpler version of the same proof, and the general statement is immediate. Because there are no cycles, one can label the vertices of $Q$ using the set $\{1,\ldots,|V|\}$ in such a way that every edge $e\in E$ satisfies $h(e)>t(e)$. Following \cite[Definition 5.5]{GJT}, I will say that $(\nu_{\infty},\ov{\nu})$ is \textit{increasing} if $\nu_{v}>\nu_{w}$ whenever $v>w$ and $\nu_{\infty}<\nu_v$ for all $v,w\in V$. Then \cite[Theorem 5.8 (iii)]{GJT} shows that the only non-zero classes $\Big[M^{\nufr}_{(1,\ov{d})}\Big]$, $\ov{[M^{\nu}_{\ov{d}}]}$ are
    $$
    \Big[M^{\nufr}_{(1,0)}\Big] = e^{(1,0)}\otimes 1\quad\text{and} \quad \ov{[M^{\nu}_{v}]} =e^{v}\otimes 1 \ \textnormal{for} \ v\in V\,. 
    $$
     By Theorem \ref{thm:WCformulae} (2), one can write any class $\Big[M^{\mufr}_{(1,\ov{d})}\Big]$ as a sum of partially lifted iterated Lie brackets of the form 
    $$
    \widehat{\ad}(P)\big(\ov{[M^{\nu}_v]}\big)\,.
    $$
    In the case that $P = e^{(1,0)}\otimes 1$, it is immediate that both \eqref{eq:PMvirasoro} and \eqref{eq:Pvirasoro} are satisfied, while $\tau_k(\infty)\cap e^{(1,0)}\otimes 1=0$ whenever $k>0$. Because weight zero Virasoro constraints hold for $e^{v}\otimes 1$, all three vanishing conditions also hold for $\widehat{\ad}(e^{(1,0)}\otimes 1)\big(\ov{[M^{\nu}_v]}\big)$ by Proposition \ref{cor:PMimpliesadPM} i), iii) and iv). The general case is shown by induction on the number of iterated operations $\widehat{\ad}$.
\end{proof}
\subsection{Application to Virasoro constraints for surfaces}
\label{sec:P2}
As already explained in the introduction, the derived equivalences between surfaces $S$ with strong full exceptional collections and quivers with relations can be used to prove Virasoro constraints for some surfaces. I address here the three cases $S=\PP^2, \PP^1\times \PP^1$ and $S=\Bl_{\pt}\PP^2$ where $\pt$ is a point in $\PP^2$. The first two cases are sufficient for proving Virasoro constraints on $\Hilb^n(S)$ for a general $S$ due to their universality. 

So let $S=\PP^2,\PP^1\times \PP ^1$, or $\Bl_{\pt}\PP^2$, and fix an ample integral divisor class $H$ for it. The Chern character of a perfect complex $F$ on $S$ will be expressed in the form
$$
\ch(F) = (r,d,n)\in H^\bullet(S)
$$
where $r$ is the rank of $F$, $d = c_1(F)$, and $n=\ch_2(F)$. Letting $p_F$ denote the reduced Hilbert polynomial of $F$, I will also label the resulting Gieseker stability as $p$-stability. This section proves Virasoro constraints for any moduli scheme
$
M^{p}_{r,d,n}
$
of $p$-semistable sheaves of type $(r,d,n)$ in the sense that it shows that
$$
\big[M^{p}_{r,d,n}\big]^{\inva} \in \widecheck{P}_0
$$
for the invariant classes of Joyce \cite[Theorem 7.64]{Jo21}. Here $\widecheck{P}_0$ was used in \cite[§4.4]{BML} to reformulate Virasoro constraints that originally had the form akin to the one presented for quiver moduli in Definition \ref{def:abstractvir}, but the proof is independent of the previous results in \cite{BML} beyond this comparison. Still, I will use multiple times notation that appeared in \cite{BML} and was adapted to quivers here by replacing a variety $X$ with $Q$. Such instances will not be explained in full detail unless necessary. I will also neglect writing $\ov{(-)}$ which was used to specify classes in $\widecheck{H}\big(\mM^{\rig}_Q\big)$.

 The Bridgeland stability I will be working with is not equivalent to $p$-stability but to a version of it twisted by a line bundle. For any line bundle $L$, define $p_E^L = p_{E(-L)}$, then $p^L$-stability of $E$ is defined the same way as $p$-stability but using the polynomials $p^L_{(-)}$. The moduli spaces of $p^L$-semistable sheaves in class $(r,d,n)$ will be labelled by $M^{p^L}_{r,d,n}$ including the case $r=0$.
 It is a well-known fact that Bridgeland stability called  \textit{large volume limit} stability can be identified with the twisted Gieseker stability for some $L$. Its name is motivated by its origin in string theory, and it appeared for the first time in the form presented here in \cite[§14]{bridgelandK3}.
 
One relies on Mumford stability for sheaves to define the heart of the large volume limit stability. Because I already used the terminology $\mu$-stability to describe the slope stability for a quiver, I will instead write 
$$
\lambda(F) = \frac{d\cdot H}{r}
$$
to denote the slope of a sheaf on $S$. Mumford stability will then also be called $\lambda$-stability. When $F$ is torsion, I will use the convention 
$\lambda(F) = \infty$ with $\infty > q$ for any $q\in \RR$ which I will interpret as any torsion sheaf being $\lambda$-semistable of slope $\infty$. 

The main tool I rely on are the derived equivalences in the following example.
\begin{example}
    \label{ex:excP2}
    \begin{enumerate}
        \item By \cite{beilinson}, it is known that $D^b(\PP^2)$ is equivalent to the bounded derived category of the quiver 
    \[P_2=\begin{tikzcd}[column sep=large]\overset{1}{\bullet}\arrow[r, shift left = 0.55em, "a_1"]\arrow{r}[name=B, description]{a_2}\arrow[r, shift right = 0.55em, "a_3"']&\overset{2}{\bullet}\arrow[r, shift left = 0.55em,"b_1"]\arrow{r}[name=B, description]{b_2}\arrow[r, shift right = 0.55em, "b_3"']&\overset{3}{\bullet}\end{tikzcd}\]
    with the relations $b_j\circ a_i = b_i \circ a_j$ for $i,j=1,2,3$. To see this, one can take the strong full exceptional collection $$\mE(k)=\big(\mO_{\PP^2}(k-2)[2], \mO_{\PP^2}(k-1)[1], \mO_{\PP^2}(k)\big)$$ for any $k\in \ZZ$ and assign to each object a vertex in $(1,2,3)$ in the prescribed order. The dimension of $\Ext^1\big(\mO_{\PP^2}(k-1), \mO_{\PP^2}(k)\big) = H^0\big(\mO(1)\big)$, hence the three arrows. The relations can be understood from the map $H^0\big(\mO(1)\big)\times H^0\big(\mO(1)\big)\to H^0\big(\mO(2)\big)$.
        \item For $\PP^1\times \PP^1$, I choose to follow \cite[§4.1]{EMiles} and use the strong full exceptional collection 
        $$
        \mE(i,j) = \big(\mO(i-2,j-1)[2], \mO(j-1,j-2)[2],\mO(i-1,j-1)[1], \mO(i,j)\big)
        $$
        where $i,j\in \ZZ$ and $\mO(i,j)$ denotes the line bundle $\mO_{\PP^1}(i)\boxtimes \mO_{\PP^1}(j)$.
        By the same argument as above, one can see that the resulting quiver is given by 
        $$
        P_1\times P_1 = \begin{tikzcd}
            \overset{1}{\bullet}\arrow[dr, shift right = 0.25em, "x_1"']\arrow[dr, shift left = 0.25em, "x_2"]&&[3em]\\
            &\overset{3}{\bullet}\arrow[r,shift left = 0.75em,"(x_iy_j)_{i,j\in\{1,2\}}"]\arrow[r,shift left = 0.25em]\arrow[r, shift right = 0.25em]\arrow[r, shift right = 0.75em]&\overset{4}{\bullet}\\
            \overset{2}{\bullet}\arrow[ur, shift right = 0.25em, "y_1"']\arrow[ur, shift left = 0.25em, "y_2"]&&
        \end{tikzcd}
        $$
        with the relations 
        $$
        (x_iy_k)\circ x_j = (x_jy_k)\circ x_i\,,\qquad  (x_ky_i)\circ y_j = (x_ky_j)\circ y_i
        $$
        for $i,j,k=1,2$.
        \item For $\Bl_{\pt}\PP^2$, set $\tilde{H}$ to be the strict transform of the hyperplane on $\PP^2$ and $E$ the exceptional divisor. Then using the divisor $F=\tilde{H}-E$, the authors of \cite[§5.1]{EMiles} describe two strong full exceptional collections:
        \begin{align*}
        \mathcal{E}_1 &= \big(\mO_S(-E-2F)[2],\mO_S(E)|_E[1],\mO_S(-E-F)[1],\mO_S\big)\,,\\
        \mathcal{E}_2 &= \big(\mO_S(-E-2F)[2], \mO_S(-F)[1], \mO_S(E)|_E,\mO_S\big)
        \end{align*}
        which lead to two quivers with relations $Q_1$ and $Q_2$.
    \end{enumerate}
   
    The resulting derived equivalences constructed using $\mE(k), \mE(i,j),\mE_1$ and $\mE_2$ are denoted in the respective order by
    \begin{align*}
    \label{eq:derequiv}
    \Gamma(k):& \ D^b(\PP^2)\stackrel{\sim}{\longrightarrow}D^b(P_2)\,,\\
     \Gamma(i,j):& \  D^b(\PP^1\times \PP^1)\stackrel{\sim}{\longrightarrow}D^b(P_1\times P_1)\,,\\
     \Gamma_{a}:& \ D^b(\Bl_{\pt}\PP^2) \stackrel{\sim}{\longrightarrow}D^b(Q_a)\quad \text{for}\quad a=1,2\,.
    \numberthis
    \end{align*}
    Starting in either of the above cases from $\rep(Q)\subset D^b(Q)$, I will write
    \begin{align*}
   \mA(k)= \Gamma(k)^{-1}&\big(\rep(P_2)
\big)\,,\qquad  \mA(i,j)= \Gamma(i,j)^{-1}\big(\rep(P_1\times P_1)
\big)\,,\\
\mA_{a} &= \Gamma_a^{-1}\big(\rep(Q_a)\big)\quad \text{for}\quad a=1,2
    \end{align*}
 for the resulting hearts of a bounded t-structure on $D^b(S)$.
\end{example}
\begin{remark}
\label{rem:derequiv}
    To be precise, one needs a bit more than just the derived equivalence of the categories. For a smooth projective surface $S$, the moduli stack $\mM_S$ appearing in \cite[Definition 4.1]{BML} parametrizes perfect complexes on $S$ as objects in the dg-category $L_{\text{pe}}(S)$ from \cite[§3.5]{TV07}. Similarly, the moduli stack $\mM_Q$ that I used in Definition \ref{def: VOAconstruction} parametrizes the objects in the dg-category $\CC Q\! -\! \Mod $ of left dg-modules of $\CC Q$ whose underlying complex of vector spaces is perfect. Denoting this latter dg-category by $(\CC Q\! -\! \Mod)_{\text{ppe}}$, it is necessary for the present purpose to lift the derived equivalences to quasi-equivalences of dg categories
    $$
    L_{\pe}(S)\stackrel{\sim}{\longrightarrow} (\CC Q\! -\! \Mod)_{\text{ppe}}\,.
    $$
    But this quasi-equivalence is constructed in the same way as the equivalence between derived categories in \cite{bondaltilt}, just on the level of dg-categories. As a consequence, we obtain an isomorphisms $\mM_X\xrightarrow{\sim}\mM_Q$ in the homotopy category of higher stacks. This isomorphism identifies the $\Ext$ complex on $\mM_S\times \mM_S$ (see \cite[Definition 4.1]{BML}) with the $\Ext_Q$ complex on $\mM_Q\times \mM_Q$ from Definition \ref{def: VOAconstruction}\footnote{The simplest way to see this is to use the natural direct sum map $\Sigma:\mM_Q\times \mM_Q$ and to note that $\Ext_Q[1]$ is the virtual normal complex of this map. Because the virtual structure is determined uniquely on $\mM_Q$ and $\mM_Q\times \mM_Q$ by their derived refinements from which they are 
 constructed in \cite{TV07}, and this also holds for $\mM_X,\mM_X\times \mM_X$, we see that the two $\Ext$-complexes must be the same. Here we used that the equivalence of higher stacks lifts to derived stacks.}. In particular, it induces an isomorphism of vertex algebras 
    $$
    \widehat{H}_{\bullet}(\mM_S)\stackrel{\sim}{\longrightarrow} \widehat{H}_{\bullet}(\mM_Q)\,.
    $$
    The same argument is applicable to any derived equivalence that lifts to a Morita-equivalence (not just quasi-equivalence) of dg-categories.
 \end{remark}
For the proof of the theorem below, I will assume some basic knowledge about Bridgeland stability conditions on a triangulated category $\mT$ defined in \cite{bridgeland} as pairs $\sigma = (\mP,Z)$ where $\mP$ is a \textit{slicing} of $\mT$ and $Z: K^0(\mT)\to \CC$ is a compatible \textit{central charge}. For a triangulated category $\mT$, I will denote by $\Stab(\mT)$ its stability manifold. A common way to construct $\sigma\in \Stab(\mT)$ is to find a heart $\mA$ of a bounded t-structure on $\mT$ and to introduce a \textit{stability function }
$$
Z: K^0(\mT)=K^0(\mA)\longrightarrow \CC \quad \textnormal{satisfying}\quad Z(E)\in \HH \quad \text{for} \quad E\in \mA\,.
$$
Here $\HH$ denotes the strict upper half-plane $\HH = \big\{me^{i\pi\phi}\colon m>0,\phi\in (0,1]\big\}$. If $Z$ has the \textit{Harder-Narasimhan property} from \cite[Definition 2.3]{bridgeland}, Bridgeland \cite[Proposition 5.3]{bridgeland} extends this data to a stability condition $\sigma$ on $\mT$. In this scenario, I will say that $\sigma$ is a stability condition of $\mT$ constructed on the heart $\mA$. I will denote the subset of $\Stab(\mT)$ of such stability conditions by $\Omega_{\mA}$. One has the action of $\RR$ on $\Stab(\mT)$ given for each $a\in \RR$ by shifting the phases of semistable objects by $+a$, and $\ov{\Omega}_{\mA}$ will denote the set of stability conditions obtained from applying this action to $\Omega_{\mA}$.

For the surface $S$, one first needs a suitable heart $\mA_{s}\subset D^b(S)$ such that Gieseker stability is captured by a stability condition in $\Omega_{\mA_{s}}$. It is obtained in \cite[Lemma 6.1]{bridgelandK3}, \cite[Definition 5.10]{ABCH}, and \cite[Definition 6.6]{Macrilecture} by \textit{tilting} which is a general construction introduced in \cite[Chapter 1]{HRS}.
\begin{definition}
Starting from the torsion pair $(\mF_{s},\mT_{s})$ defined for each $s\in \RR$ by
  \[\mF_s = \left\{F\in \Coh(S) \colon \begin{array}{c}\textnormal{every Mumford semistable}\\ \textnormal{factor }F_i\textnormal{ in the Harder--Narasimhan}\\ \textnormal{filtration of }F\textnormal{ satisfies } \lambda(F_i)\leq \frac{K_S\cdot H}{2H^2} + s\end{array}\right\}\,,\]  
    \[\mT_s = \left\{F\in \Coh(S) \colon \begin{array}{c}\textnormal{every Mumford semistable}\\ \textnormal{factor }F_i\textnormal{ in the Harder--Narasimhan}\\ \textnormal{filtration of } F\textnormal{ satisfies } \lambda(F_i)> \frac{K_S\cdot H}{2H^2} + s\end{array}\right\}\,,\]
one defines
$$
\mA_s = \big\langle \mT_s,\mF_s[1] \big\rangle\,.
$$
Note that I use a different convention compared to Bridgeland which makes the comparison with Gieseker stability more direct when $K_S\neq 0$. The stability function $Z_{s,t}: \mA_s\to \HH$ depends on an additional parameter $t>0$, and it is given in \cite[Lemma 6.2]{bridgelandK3}, \cite[Theorem 5.11]{ABCH}, and \cite[Theorem 6.10]{Macrilecture} by
$$
Z_{s,t}(F)  = -\int_{\PP^2}\ch(F) e^{-\frac{K_S}{2}-(s+it)H}\,.
$$
Denote by $\sigma_{s,t}$ the resulting stability condition in $\Omega_{\mA_s}$.
\end{definition}
I shall now prove the following statement. 
\begin{theorem}
\label{thm:P2indep}
    The classes $\big[M^{p}_{r,d,n}\big]^\inva$ for $S=\PP^2,\PP^1\times \PP^1$ and $\Bl_{\pt}(\PP^2)$ and any Chern character of a sheaf $(r,d,n)$ on $S$ satisfy Virasoro constraints in the sense of \cite[Conjecture 1.10]{BML}.
\end{theorem}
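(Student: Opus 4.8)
The plan is to reduce the statement to the quiver-side result already established, namely that for any Bridgeland stability condition $\sigma$ on $\rep(Q)$ the class $[M^{\sigma}_{\ov{d}}]^{\inva}$ lies in $\widecheck{P}_0$ (the corollary to Theorem \ref{thm:main} together with Proposition \ref{prop:bridgeland}, applied with $d_\infty = 0$ since we are dealing with honest sheaves rather than pairs), and to transport this membership back to the surface through the derived equivalences of Example \ref{ex:excP2}. Being an element of $\widecheck{P}_0$ is precisely the vertex-algebra reformulation of the Virasoro constraints of \cite[Conjecture 1.10]{BML}, as in \cite[§4.4]{BML}, Proposition \ref{prop:combinedBLM}, and the dictionary of §\ref{sec:how}; moreover the vertex-algebra isomorphism $\widehat{H}_{\bullet}(\mM_S)\xrightarrow{\sim}\widehat{H}_{\bullet}(\mM_Q)$ of Remark \ref{rem:derequiv} preserves the conformal structure and hence $\widecheck{P}_0$. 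So the entire argument comes down to identifying the relevant invariant classes on the two sides.

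First I would dispose of the twist. The autoequivalence $(-)\otimes L$ of $D^b(S)$ lifts to an isomorphism of $\mM_S$ identifying $\Ext$-complexes, so by the same reasoning as in Remark \ref{rem:derequiv} it preserves the vertex algebra and its conformal element; it carries $[M^{p}_{r,d,n}]^{\inva}$ to $[M^{p^{L}}_{r',d',n'}]^{\inva}$ with a shifted Chern character and preserves membership in $\widecheck{P}_0$. Hence it suffices to prove the constraints for the twisted moduli $M^{p^L}_{r,d,n}$, with the twist dictated by the large volume limit.

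Next comes the stability-condition bookkeeping, following \cite{ABCH, EMiles}. For $t_0\gg 0$ the stability condition $\sigma_{s_0,t_0}$ built on the tilted heart $\mA_{s_0}$ is equivalent to $p^L$-stability (for $r>0$ by \cite[§6]{Macrilecture}, for $r=0$ by \cite[Lemma 7.3]{Wolf}), so its semistable moduli stack is that of $M^{p^L}_{r,d,n}$. Using Bertram's nested wall theorem \cite{ABCH, Macioca13} I would move $(s,t)$ down along the unique wall $W$ through $(s_0,t_0)$ keeping the stacks $\mM^{s,t}_{r,d,n}$ constant, until $t$ is small enough that $\sigma_{s,t}\in\ov{\Omega}_{\mA}$ for a heart $\mA$ generated by a suitable member of the family of strong full exceptional collections $\mE(k)$ (respectively $\mE(i,j)$, $\mE_a$), by \cite[§7]{ABCH} and \cite[§4.2, §5.3, §5.4]{EMiles}. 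The derived equivalence $\Gamma$ then carries $\sigma_{s,t}$ to a Bridgeland stability condition $\sigma$ on $\rep(Q)$ for the appropriate quiver with relations, the dimension vector $\ov{d}$ being read off from the image of $(r,d,n)$ under $\Gamma$ on $K$-theory. Invoking Proposition \ref{prop:bridgeland} first along $W$ and then through $\Gamma$, I obtain the equality $[M^{p^L}_{r,d,n}]^{\inva} = [M^{\sigma}_{\ov{d}}]^{\inva}$ in $\widecheck{P}_0$, and combining with the corollary to Theorem \ref{thm:main} concludes the proof.

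The hard part is the last equality in the presence of strictly semistable sheaves. Comparing the bare semistable stacks for the fixed class $(r,d,n)$ does not suffice: Joyce's construction of the invariant class $[-]^{\inva}$ in \cite[§9.1]{Jo21} is assembled from the whole stack of objects together with all stacks of the potential Jordan--Hölder subfactors entering the wall-crossing. I would therefore invoke Lemma \ref{lem:filing}, which identifies these subobject stacks on the surface and quiver sides simultaneously, to guarantee that the two invariant-class constructions yield the same element of $\widecheck{V}_{\bullet}$ under the vertex-algebra isomorphism. Verifying that the nested-wall deformation preserves not merely $\mM^{s,t}_{r,d,n}$ but all the relevant substacks, so that Proposition \ref{prop:bridgeland} applies uniformly across $W$ and $\Gamma$, is the main technical obstacle; everything else is transport of structure along the equivalences already set up in Example \ref{ex:excP2} and Remark \ref{rem:derequiv}.
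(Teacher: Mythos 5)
Your proposal is correct and follows essentially the same route as the paper's proof: identify twisted Gieseker stability with the large volume limit $\sigma_{s_0,t_0}$ (using \cite[§6]{Macrilecture} for $r>0$ and \cite[Lemma 7.3]{Wolf} for $r=0$), travel along Bertram's nested wall into a quiver region, transport through the derived equivalence and its induced vertex-algebra isomorphism of Remark \ref{rem:derequiv}, apply Proposition \ref{prop:bridgeland} on the quiver side, and handle strictly semistables by matching Joyce's invariant-class construction on both sides via Lemma \ref{lem:filing} — which you correctly single out as the main technical point, since the bare semistable stacks for the fixed class do not determine $[-]^{\inva}$. The only cosmetic differences are that you dispose of the line-bundle twist at the start rather than at the end, and that the identification of invariant classes along the wall $W$ rests on Lemma \ref{lem:filing} (comparison of $\mathrm{Dec}$ sets and pair moduli spaces) rather than on Proposition \ref{prop:bridgeland} itself, as your final paragraph in effect acknowledges.
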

\begin{proof}
For the purpose of simplicity, I focus on the $r>0$ case for now and explain the necessary adapations for $r=0$ later. In the first step, I will recall that some twist by a line bundle of every $p$-semistable sheaf for the above surfaces can be identified by using the derived equivalences from \eqref{eq:derequiv}  with a Bridgeland semistable representations of the quivers from Example \ref{ex:excP2}. Gieseker stability for any surface is related to the above Bridgeland stability in \cite[Exercise 6.27]{Macrilecture} which is based on the previous similar statements in \cite[§14]{bridgelandK3} and \cite[§6]{ABCH} given in special cases. One then uses \textit{Bertram's nested wall theorem} proved in \cite[Theorem 3.1]{Macioca13} to move around in $\Stab\big(D^b(S)\big)$ without changing the classes of semistable objects and to reach the regions $\ov{\Omega}_{A(k)}$ or $\ov{\Omega}_{A(i,j)}$. In the case of $\Bl_{\pt}\PP^2$, one needs to consider the twists by line bundles of both $\ov{\Omega}_{\mA_1}$ and $\ov{\Omega}_{\mA_2}$ to cover all sheaves. I have to of course translate all of this into a statement about invariants. 

The rest of the proof reduces to Proposition \ref{prop:bridgeland} which generalizes wall-crossing to include Bridgeland stability conditions constructed on the standard heart $\rep(Q)\subset D^b(Q)$. I can then repeat the argument given in the proof of Theorem \ref{thm:main} after noting that derived equivalences preserve Virasoro constraints.

Let us formulate the above arguments in a precise way now. For a fixed $(r,d,n)$ such that
\begin{equation}
\label{eq:conditionons}
s\Big(1-\frac{H^2}{r}\Big)< \frac{d\cdot H}{r}-\frac{K_S\cdot H}{2H^2}\,,\qquad r>0
\end{equation}
it is stated in \cite[Exercise 6.27]{Macrilecture} (based on \cite[Proposition 14.2]{bridgelandK3} and \cite[Proposition 6.2, Corollary 6.5]{ABCH}) that when $t\gg 0$ and $s\in \ZZ$, the $\sigma_{s,t}$-semistable objects in $\mA_s$ of class $(r,d+sH,n')$ coincide with the $p^{sH}$-semistable torsion-free sheaves of the same class. Thus there is an identification of substacks 
\begin{equation}
\label{eq:openembed2}
\mM^{p}_{r,d,n}=\mM^{p^{sH}}_{r,d+sH,n'}= \mM^{(s,t)}_{r,d+sH,n'}\subset \mM^{\rig}_S\end{equation}
where $n'=n+sd\cdot H+r \frac{(sH)^2}{2}$. The symbol $\mM^{(-)}_{(-)}$ is reserved for the stack as opposed to the scheme $M^{(-)}_{(-)}$, and $\mM^{(s,t)}_{(-)}$ is the stack of $\sigma_{s,t}$-semistable objects in $\mA_s$ with a similar notation for the corresponding scheme $$M^{(s,t)}_{(-)}\,.$$ As before, $\mM^{\rig}_X$ is the rigidification of the stack of all perfect complexes on $X$ (used in \cite[Definition 4.1]{BML}). Finally, as one can rescale $H$ without changing $p$-stability, one can assume that $\frac{H^2}{r}\neq 1$. Therefore, one can always find a suitable $s_0\in \ZZ$ satisfying \eqref{eq:conditionons} together with a sufficiently large $t_0$ which I choose as our starting point $(s_0,t_0)$. In Lemma \ref{lem:filing}, I will show that the above equalities translate word for word to invariants, but this requires some further arguments. 

In \cite[Corollary 7.6]{ABCH} and \cite[§4.2]{EMiles}, \cite[§6.2]{EMilesthesis}, the authors start at $(s_0,t_0)$, and then vary the stability condition $\sigma_{s,t}$ along a (numerical) wall $W$ in the $(s,t)$-plane while decreasing $t$. These walls are described by Bertram's nested wall theorem proved in \cite[Proposition 2.6, Theorem 3.1]{Macioca13}, which shows that they are vertical lines or semi-circles centred at $t=0$ and that they never intersect each other because the semi-circles are nested, while the lines are disjoint from them. In particular, any semistable object remains semistable along this trajectory so that the moduli stacks $\mM^{(s,t)}_{r,d', n'}$ (here $d'=d+s_0H$) are kept unchanged.  This is mirrored by the associated invariant classes as will be explained in Lemma \ref{lem:filing}.

The next argument was proved in the literature on a case-by-case basis, so we address each $S$ separately:
\begin{enumerate}
    \item For $\PP^2$, \cite[Proposition 7.5 and Corollary 7.6]{ABCH} show that $W$ eventually enters the region $\ov{\Omega}_{\mA(k)}$ for some $k$. I will label a choice of the resulting stability condition by $\sigma(k)\in \ov{\Omega}_{\mA(k)}$. Applying the derived equivalence $\Gamma(k)$ and shifting the phases of all semistables, one can transform $\sigma(k)$ into
    $$
\sigma^{P_2}_k \in \Omega_{\rep(P_2)}\,.
$$
Then the moduli stack $\mM^{\sigma^{P_2}_k}_{\ov{d}}$ of $\sigma^{P_2}_k$-semistable representations of $P_2$ with dimension vector $\ov{d}$ corresponding to $(r,d',n')$ is naturally isomorphic to $\mM^{\sigma(k)}_{r,d',n'}$.
    \item The claim that $W$ enters the quiver region $\ov{\Omega}_{\mA(i,j)}$ for $\PP^1\times \PP^1$ is proved in \cite[§4.2]{EMiles}, \cite[Proposition 6.1]{EMilesthesis}. In the same way, one obtains a stability condition in $\Omega_{\Rep(P_1\times P_1)}$ with the associated moduli stack isomorphic to $\mM^{\sigma(k)}_{r,d',n'}$.
    \item To conclude the same for $\Bl_{\pt}\PP^2$, one needs two different quiver regions obtained from $\ov{\Omega}_{\mA_1}$ and $\ov{\Omega}_{\mA_2}$ by twisting with line bundles. The wall $W$ enters the union of these two regions by \cite[§5.3, §5.4]{EMiles}.
\end{enumerate}
While I compared the stacks for different stabilities, I have not addressed the corresponding equalities for invariants yet. The next lemma fills in the necessary details. 
\begin{lemma}
   \label{lem:filing}
For each $(s,t)$ contained in the numerical wall $W$, there are well-defined classes
$
\Big[M^{(s,t)}_{r,d',n'}\Big]^{\inva}\in \widecheck{H}_0(\mM^{\rig}_S)
$ constructed by the same procedure as in \cite[§7.2.2, §9.1]{Jo21}. They satisfy
$$
\Big[M^{(s,t)}_{r,d',n'}\Big]^{\inva} = \Big[M^{p^{s_0H}}_{r,d',n'}\Big]^{\inva} =\Big[M^{p}_{r,d,n}\Big]^{\inva} \,.
$$
The last step uses independence of the $[-]^{\inva}$ classes under twisting by line bundles which is a consequence of their definition.
Simultaneously, when $S=\PP^2$ this defines classes
$$
\Big[M^{\sigma(k)}_{r,d',n'}\Big]^{\inva}\in \widecheck{H}_0(\mM^{\rig}_{\PP^2})
$$
which satisfy (see Remark \ref{rem:derequiv})
$$
\Big[M^{\sigma(k)}_{r,d',n'}\Big]^{\inva} = \Big[M^{\sigma^{P_2}_k}_{\ov{d}}\Big]\in 
\widecheck{H}_0\big(\mM^{\rig}_{\PP^2}\big)=\widecheck{H}_0\big(\mM^{\rig}_{P_2}\big)
$$
The classes on the right-hand side of the equalities are defined by applying \cite[Example 5.6, §6.4.3, §9.1]{Jo21} to the stability condition $\sigma^{P_2}_k$. 

The corresponding statements for $S=\PP^1\times \PP^1$ and $\Bl_{\pt}\PP^2$ also hold.  
\end{lemma}
\begin{proof}
    For the sake of brevity, I will write $\alpha = (r,d,n),\alpha'=(r,d',n')$ here, and one can think of them as either rational K-theory classes or their Chern characters. I will recall the definition of $\big[M^p_{\alpha}\big]^{\inva}$ here before I explain how to adapt it for our moduli spaces at any $(s,t)$. Instead of studying sheaves, Joyce \cite[§7.2.2, §9.1]{Jo21} based on \cite{mochizuki} considers the moduli schemes $P^l_{\alpha}$ of stable pairs 
    $
    \mO_S(-l)\xrightarrow{s} F
    $ for a sufficiently large $l$ and $\ch(F)=\alpha$ where the stability is defined in \cite[Example 5.6]{Jo21} as follows:
    \begin{enumerate}
        \item $F$ is $p$-semistable,
        \item $s\neq 0$, and if it factors through a strict subsheaf $F'\subsetneq F$, then $p_F'<p_F$.
    \end{enumerate}
    These moduli schemes carry a virtual fundamental class $\big[P^l_{\alpha}\big]^{\vir}$ and admit the natural projection $\Pi^{l}:P^l_{\alpha}\to \mM^{\rig}_S$. Using $T_{\Pi^l}$ to denote the relative tangent bundle of this projection and $c_{\rk}\big(T_{\Pi^l}\big)$ its top Chern class, the classes
    $$
\bar{\Upsilon}^l_{\alpha} = \Pi^l_*\Big(\big[P^l_{\alpha}\big]^{\vir}\cap c_{\rk}\big(T_{\Pi^l}\big)\Big)\,,
    $$
    are defined in \cite[Definition 9.2]{Jo21}.  To obtain the invariants $\big[M^p_{\alpha}\big]^{\inva}$, the above classes need to be corrected by subtracting contributions corresponding to strictly semistable sheaves. This is expressed in \cite[Proposition 9.3]{Jo21} in terms of the Lie bracket coming from the vertex algebra on $\widehat{H}_{\bullet}(\mM_S)$. The rest of \cite[§9]{Jo21} is then dedicated to showing that $\big[M^p_{\alpha}\big]^{\inva}$ are independent of the choice of $l$. This hinges on \cite[Corollary 9.10]{Jo21} which states that
$$
\chi\big(\alpha(l_1)\big)\bar{\Upsilon}^{l_2}_{\alpha} - \chi\big(\alpha(l_2)\big)\bar{\Upsilon}^{l_1}_{\alpha}  = \sum_{\beta\in \text{Dec}^p(\alpha)}\Big[\bar{\Upsilon}^{l_1}_{\beta},\bar{\Upsilon}^{l_2}_{\alpha-\beta}\Big]
$$
where $\text{Dec}^p(\alpha)\subset K^0(S,\QQ)$ is the set of K-theory classes satisfying
\begin{enumerate}
\item $p_{\beta} = p_{\alpha}$ and thus $p_{(\alpha-\beta)} = p_{\alpha}$,
    \item there exists an inclusion $F\hookrightarrow E$ of $p$-semistable sheaves with $\ch(F) = \beta$ and $\ch(E)=\alpha$.
\end{enumerate}
The second statement could be equivalently stated as $\mM^p_{\beta},\mM^p_{\alpha-\beta}\neq 0$ which is closer to the original formulation in \cite{Jo21}.
We see that to prove that the definition is independent of $l$,  one only needs $\ov{\Upsilon}^{l}_{\beta}$ to be defined whenever $\beta \in \text{Dec}^p(\alpha)$ and $l\gg 0$. We want the same to be true when one works with any one of the above stability conditions $\sigma$ and defines $\text{Dec}^{\sigma}(\alpha)$ in the same way. 

Replacing $p$ by $p^{s_0H}$, I define moduli spaces $P^{l,s_0H}_{\alpha'}$ of stable pairs of the same form $\mO_S(-l)\to F$, but this time the definition of stability uses $p^{s_0H}$. The same argument as above implies that the resulting $\Big[M^{p^{s_0H}}_{\alpha'}\Big]^{\inva}$ is independent of the choice of $l$ which in turn shows that 
$$
\Big[M^{p^{s_0H}}_{\alpha'}\Big]^{\inva} = \Big[M^{p}_{\alpha}\Big]^{\inva}\,.
$$
I will show now that the same holds when $\alpha'$ is replaced by $\beta\in\text{Dec}^{\sigma}(\alpha')$ where $\sigma$ can be any of the above three stability conditions because this set is the same for all three cases. I use the following statement proved in \cite[Proposition 6.22 (7)]{Macrilecture} based on \cite[Lemma 6.3]{ABCH}: for any $(s,t)\in W$, $F\to E$ is an inclusion in $\mA_s$ of $\sigma_{s,t}$-semistable sheaves with equal phases with respect to $\sigma_{s,t}$ and with $\ch(E)=\alpha'$, if and only if this is true for any other $(s_0,t_0)\in W$. As a consequence, I immediately conclude that 
$$
\text{Dec}^{\sigma_{s,t}}(\alpha')= \text{Dec}^{\sigma_{s_0,t_0}}(\alpha')\,,\qquad \mM^{(s,t)}_{\beta}= \mM^{(s_0,t_0)}_{\beta} \quad \text{for}\quad \beta\in \text{Dec}^{\sigma_{s,t}}(\alpha')\,.
$$
If $[F]\in \mM^{(s_0,t_0)}_{\beta}$ for $\beta\in \Dec^{\sigma_{s_0,t_0}}(\alpha')$, then there is a $[G]\in \mM^{(s_0,t_0)}_{\alpha'-\beta}$ such that $F\oplus G$ is a $p^{s_0H}$-semistable torsion-free sheaf, so $[F]\in \mM^{p^{s_0H}}_{\beta}$ and $\beta\in \Dec^{p^{s_0H}}(\alpha')$. After showing the converse by the same argument, I deduce that $$\Dec^{\sigma_{s_0,t_0}}(\alpha')= \Dec^{p^{s_0H}}(\alpha')\,,\qquad \mM^{(s_0,t_0)}_{\beta}= \mM^{p^{s_0H}}_{\beta} \quad \text{for}\quad \beta\in \text{Dec}^{p^{s_0H}}(\alpha')\,.$$

Therefore, $P^{l,\sigma}_{\alpha'}$ is defined for all three of the above stability conditions $\sigma$ by replacing $p$-stability in the first definition. Simultaneously, this applies to $P^{l,\sigma}_{\beta}$ for all $\beta\in \Dec^{\sigma}(\alpha')$. Using the above results, it is immediate that all three types of pair moduli spaces are equal and therefore
$$
\Big[M^{(s,t)}_{\alpha'}\Big]^{\inva} = \Big[M^{p^{s_0H}}_{\alpha'}\Big]^{\inva} =\Big[M^{p}_{\alpha}\Big]^{\inva} 
$$
are all independent of $l$. 

For the rest of the argument, I only focus on $\PP^2$ because it is identical for $\PP^1\times \PP^1$. The equivalence $\Gamma(k)$ gives us another way to construct stable pairs for $\sigma(k)$ stability. The semistable sheaves in $\mA_s$ are mapped to semistable representations of $P_2$ (or their shifts) from which one constructs pairs by attaching a vertex $\infty$ with dimension $1$ and edges ending at every one of the original vertices. This construction is explained in greater generality in the proof of Proposition \ref{prop:bridgeland} and was used in \cite[Definition 5.12]{GJT}. The resulting pair moduli space $P^{\infty, \sigma^{P_2}_k}_{\ov{d}}$ parametrizes stable representations with dimension vector $(1,\ov{d})$ of the quiver $^{\infty}_{\ov{1}}P^2$ where $\ov{1}$ is the vector with 1 at each vertex. The stability condition of these representations is not important for the current proof, so I will not write it out as it could be guessed from its analog for sheaves. Using $\Big[P^{\infty, \sigma^{P_2}_k}_{\ov{d}}\Big]$, I define
$$\Big[M^{\sigma^{P_2}_k}_{\ov{d}}\Big]\in \widecheck{H}_0\big(\mM^{\rig}_{P_2}\big)$$
in the same way as was done before for sheaves. Because both constructions of pair moduli spaces $P^{l,\sigma(k)}_{\alpha'}$ and $P^{\infty, \sigma^{P_2}_k}_{\ov{d}}$ are special cases of \cite[Example 5.6]{Jo21}. The two different definitions of classes counting $\sigma(k)$-semistable objects they lead to are identical by the same argument in \cite[§9.2, §9.3]{Jo21} that was used to show $l$-independence. This concludes the proof that 
$$
\Big[M^{\sigma(k)}_{\alpha'}\Big]^{\inva} = \Big[M^{\sigma^{P_2}_k}_{\ov{d}}\Big]\in \widecheck{H}_0\big(\mM^{\rig}_{\PP^2}\big)=\widecheck{H}_0\big(\mM^{\rig}_{P_2}\big)\,.
$$
\end{proof}
\begin{remark}
    Here, I address the necessary changes for the case $r=0$. In this case, the proof of the second equality in \eqref{eq:openembed2} does not follow directly from \cite[Exercise 6.27]{Macrilecture}, \cite[Proposition 14.2]{bridgelandK3}, and \cite[Proposition 6.2]{ABCH}. These references can only be used to show that every $\sigma_{s,t}$-semistable object of class $(0,d,n')$ is a $p^{sH}$-semistable 1-dimensional sheaf for $t\gg 0$. That there is a sufficiently large $t$ such that the two moduli stacks can be identified in $\mM^{\rig}_S$ follows from \cite[Lemmma 7.3]{Wolf} -- I was made aware of this result by Pierrick Bousseau. Note that we never need any restrictions on $s$ like in \eqref{eq:conditionons} because 1-dimensional sheaves are always contained in $\mT_s$. 

    The walls for $r=0$ are always semi-circles by \cite[§6, Case 2]{ABCH} so this part of the argument does not change. To retain the control over the invariant classes along the wall in the sense of Lemma \ref{lem:filing}, we need to replace \cite[Proposition 6.22 (7)]{Macrilecture} by an appropriate result for 1-dimensional sheaves. This can be found in \cite[Proposition 6.10]{Wolf}, so we still recover the equalities 
    $$
\Big[M^{p}_{0,d,n}\Big]^{\inva} = \Big[M^{(s,t)}_{0,d,n'}\Big]^{\inva}=\Big[M^{\sigma^{P_2}_k}_{\ov{d}}\Big]\in 
\widecheck{H}_0\big(\mM^{\rig}_{\PP^2}\big)=\widecheck{H}_0\big(\mM^{\rig}_{P_2}\big)
$$
and their analogs for $S=\PP^1\times \PP^1$ and $\Bl_{\pt}\PP^2$.
\end{remark}

By Proposition \ref{prop:bridgeland} below, one knows that $\Big[M^{\sigma^{P_2}_{k}}_{\ov{d}}\Big]\in \widecheck{P}_0$ or in other words -- they satisfy Virasoro constraints in the sense of Definition \ref{def:abstractvir}.
Finally, I use the isomorphisms of vertex algebras $\check{H}_\bullet(\mM_{P_2})\cong \check{H}_\bullet(\mM_{\PP^2})$ constructed from the derived equivalences in Remark \ref{rem:derequiv} to conclude from Lemma \ref{lem:filing} that the classes $\Big[M^{p^{s_0H}}_{\alpha'}\Big]^{\inva}$ also lie in $\widecheck{P}_0$. Twisting by a line bundle is also a derived equivalence, so the same argument implies that 
$$
[M^{p}_{\alpha}]^{\inva}\in \widecheck{P}_0\,.
$$
By \cite[Theorem 1.9, Conjecture 1.10]{BML}, this means that $[M^{p}_{\alpha}]^{\inva}$ satisfy Virasoro constraints for sheaves.
The same is true for $\PP^1\times \PP^1$ and $\Bl_{\pt}(\PP^2)$.   
\end{proof}
\begin{proposition}
\label{prop:bridgeland}
\sloppy    The wall-crossing formula \eqref{eq:quiverWC} still holds when $\mu$- and $\nu$-stabilities are replaced by $\sigma_0, \sigma_1\in \Omega_{\rep(Q)}$ .
    
Consequently, the (virtual) fundamental classes $[M^\sigma_{\ov{d}}]$ satisfy Virasoro constraints for any $\sigma\in \Omega_{\rep(Q)}$ and $\ov{d}\in K^0(Q)$ if there exists $\sigma'\in \ov{\Omega}_{\Rep(Q)}$ such that this holds for $[M^{\sigma'}_{\ov{c}}]$ for all $\ov{c}\in K^0(Q)$. If $Q$ is acyclic, then any increasing slope stability from the proof of Theorem \ref{thm:main} is an example of such $\sigma'$.
\end{proposition}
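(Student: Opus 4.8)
The plan is to deduce the first assertion from Joyce's abstract wall-crossing machinery rather than to reprove it, the entire point being that a Bridgeland stability condition built on $\rep(Q)$ is nothing but a weak stability condition on a fixed finite-length heart. First I would observe that since $\sigma_0,\sigma_1\in\Omega_{\rep(Q)}$ are by construction supported on the \emph{same} abelian category $\rep(Q)$, each restricts to a slope (phase) function $\phi_{\sigma_i}(\ov{d})=\tfrac1\pi\arg Z_i(\ov{d})$ on this fixed heart, so $\sigma_0,\sigma_1$ occupy exactly the slots that $\ov\mu,\ov\nu$ did in Theorem \ref{thm:WCformulae}. The only genuine difference is that $\phi_{\sigma_i}$ is no longer of the weighted-average form \eqref{eq:mustability}, since the central charge $Z_i$ need not be $\RR$-linear in the naive way. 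Consequently the whole content of the first statement is to verify that Joyce's standing hypotheses \cite[§5.1, §5.2]{Jo21} hold for these more general slope functions, after which the same theorems of Joyce underlying Theorem \ref{thm:WCformulae} yield the formula \eqref{eq:quiverWC} with $\mu,\nu$ replaced by $\sigma_0,\sigma_1$, verbatim and in $\widecheck{V}_\bullet$.

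The key structural input making this verification possible is that $\rep(Q)$ is of finite length: because $\mA_Q$ is finite-dimensional (as assumed throughout after \cite{King}), the heart is Noetherian and Artinian with finitely many simple objects indexed by $V$. From this I would extract all three families of Joyce's conditions. Finite length forces existence and uniqueness of Harder--Narasimhan filtrations for \emph{any} slope function, which gives permissibility of the $\phi_{\sigma_i}$. For the boundedness clauses I would use that for a fixed $\ov{d}\in K^0(Q)=\ZZ^V$ the effective classes $\ov{c}$ arising as dimension vectors of subobjects satisfy $0\le \ov{c}\le \ov{d}$ componentwise, so only finitely many decompositions $\ov{d}=\sum_i\ov{d}_i$ into classes of $\Lambda_+$ occur; this simultaneously renders the sum in \eqref{eq:quiverWC} finite and supplies the finite-type and boundedness requirements, the semistable loci being open substacks of the already finite-type stack $\mM_{\ov{d}}$. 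The stack-theoretic hypotheses of \cite[§5.1]{Jo21} are in force for $\mM_Q$ from §\ref{sec:JoyceVA}. I expect this step — tracking Joyce's list one clause at a time for a central-charge slope on a fixed finite-length heart — to be the main obstacle, precisely because $\phi_{\sigma_i}$ is not of the elementary form \eqref{eq:mustability}, so the compatibility and boundedness conditions must be checked directly rather than quoted from the slope-stability case.

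With wall-crossing in hand the Virasoro statement is formal. Given $\sigma'\in\ov\Omega_{\Rep(Q)}$ with $\ov{[M^{\sigma'}_{\ov{c}}]}\in\widecheck{P}_0$ for all $\ov{c}$, I would expand $\ov{[M^{\sigma}_{\ov{d}}]}$ via \eqref{eq:quiverWC} as a scalar combination of iterated Lie brackets of the classes $\ov{[M^{\sigma'}_{\ov{d}_i}]}$ in $\widecheck{V}_\bullet$. Since $\widecheck{P}_0$ is a Lie subalgebra of $\widecheck{V}_0$ (Definition \ref{Def: physicalstates}) — equivalently, since the bracket preserves the weight-zero Virasoro constraints by Proposition \ref{cor:PMimpliesadPM}(i) — every iterated bracket again lies in $\widecheck{P}_0$, and $\widecheck{P}_0$ being a linear subspace, so does the combination. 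Hence $\ov{[M^{\sigma}_{\ov{d}}]}\in\widecheck{P}_0$, i.e. it satisfies Virasoro constraints. The framed/pair counterparts are handled in complete parallel using the refined formula \eqref{eq:pairWC} together with Proposition \ref{cor:PMimpliesadPM}(iv).

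Finally, for acyclic $Q$ I would produce the required $\sigma'$ exactly as in the proof of Theorem \ref{thm:main}. An increasing slope stability $\ov\nu\in\RR^V$ is itself a Bridgeland stability on the heart: setting $Z(\ov{d})=-\sum_{v}\nu_v d_v+i\sum_v d_v$ gives $Z(\ov{d})\in\HH$ for every nonzero effective $\ov{d}$ and recovers $\mu_\nu$ as its slope, so $\ov\nu$ lies in $\Omega_{\rep(Q)}\subset\ov\Omega_{\Rep(Q)}$. By Theorem \ref{thm:main} the only nonzero classes for such $\ov\nu$ are the point-like $\ov{[M^{\nu}_{v}]}=e^{v}\otimes 1$, on which the constraints hold trivially, so $\ov{[M^{\nu}_{\ov{c}}]}\in\widecheck{P}_0$ for all $\ov{c}$ and the hypothesis of the proposition is met.
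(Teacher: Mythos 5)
Your overall architecture matches the paper's: reduce the first claim to verifying Joyce's hypotheses \cite[Assumptions 5.1--5.3]{Jo21}, then obtain the Virasoro statement formally from the fact that $\widecheck{P}_0$ is a Lie subalgebra (equivalently Proposition \ref{cor:PMimpliesadPM} (i), (iv)), and finally realize an increasing slope stability as an element of $\Omega_{\rep(Q)}$ via $Z(\ov{d})=-\sum_v \nu_v d_v + i\sum_v d_v$. Your last two paragraphs are essentially identical to the paper's treatment of the second statement.

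The gap is in the first part: you verify the wrong clauses. The items you check --- existence and uniqueness of HN filtrations on a finite-length heart, componentwise boundedness of subobject classes, finiteness of the decompositions entering \eqref{eq:quiverWC}, the stack-theoretic setup --- are indeed satisfied, but they are the easy part. For quivers, the paper identifies that the entire content of Joyce's list concentrates in \cite[Assumption 5.2 (g), (h)]{Jo21}: projectivity (properness) of the coarse moduli schemes $M^{\sigma}_{\ov{c}}$ when there are no strictly semistables, and projectivity of Joyce's auxiliary quiver-pair moduli schemes used to define the invariant classes. Properness does not follow from finite length of $\rep(Q)$ or from any boundedness statement; it is a GIT statement, and King's theorem \cite[Proposition 4.3]{King} is stated only for linear characters $\theta$, not for phase functions of a central charge. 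The paper bridges this by manufacturing a King character out of $Z$: after perturbing $Z$ so that $\Im Z(E)>0$ on all of $\rep(Q)$, one sets $\rk_{\sigma}=\Im Z$, $\deg_{\sigma}=-\Re Z$ and defines $\theta^{\ov{c}}_{\sigma}(\ov{d})=\deg_{\sigma}(\ov{d})-\rk_{\sigma}(\ov{d})\,\deg_{\sigma}(\ov{c})/\rk_{\sigma}(\ov{c})$, so that $\sigma$-(semi)stability in class $\ov{c}$ coincides with $\theta^{\ov{c}}_{\sigma}$-(semi)stability and King's projectivity applies; for (h) the same conversion must be redone on the glued quiver $\mathring{Q}\sqcup_{\circ}Q$ underlying Joyce's pairs, where the slope $\sigma^{\epsilon}_{\mathring{\mu}}$ takes values in a lexicographically ordered $\RR\times\RR$ and is turned into a character $\vartheta^{\epsilon,\delta}_{\sigma,\mathring{\mu}}$ with a sufficiently small perturbation parameter $\delta>0$. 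Without this step (or some substitute proof of properness), Joyce's wall-crossing theorem cannot be quoted, so your first paragraph as written does not yet establish \eqref{eq:quiverWC} for Bridgeland stabilities.
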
\begin{proof}
The proof of the first statement follows from checking that \cite[Assumptions 5.1--5.3]{Jo21} are satisfied. This might seem as a daunting task at first, but for quivers, it boils down to \cite[Assumption 5.2 (g), (h)]{Jo21} which follows for $\mu$-stability by \cite[Proposition 4.3]{King} together with \cite[§6.4.3]{Jo21}. The latter rephrases \cite[Assumption 5.2 (h)]{Jo21} which applies to Joyce's quiver pair moduli schemes from \cite[Definition 5.5]{Jo21} in such a way that King's projectivity result can be used again. Since I will follow the same logic here, I will only mention parts where the present proof deviates from the original one. Hence, it is recommended to get acquainted with the said quiver pairs and their simplification in \cite[§6.4.3]{Jo21}.

\begin{proof}[Checking {\cite[Assumption 5.2 (g)]{Jo21}}] In the present case, the stability condition is $\sigma\in \Omega_{\rep(Q)}$ determined by a stability function $Z$. I will just recall a standard argument for proving that $M^{\sigma}_{\ov{d}}$ is projective whenever there are no strictly $\sigma$-semistables of class $\ov{d}$ because I will need to modify it later on. To do so, I may assume that $$\Im\big(Z(E)\big)>0$$ for any $E\in \rep(Q)$. If this is not the case, one can change the values of $Z$ on $v\in V$ slightly without reordering them so that $\Im\big(Z(v)\big)>0$ for all $v\in V$. To make the following paragraphs more intuitive, I will use the notation 
$$
\rk_{\sigma}(\ov{d})= \Im\big(Z(\ov{d})\big)\,,\qquad \deg_{\sigma}(\ov{d})=-\Re\big(Z(\ov{d})\big)\,.
$$
Fixing $\ov{c}\in K^0(Q)$, I define the character $\theta^{\ov{c}}_{\sigma}$ on $\rep(Q)$ in the sense of \cite[Definition 1.1]{King} by
$$
\theta^{\ov{c}}_{\sigma}(\ov{d}) = \deg_{\sigma}(\ov{d}) - \rk_{\sigma}(\ov{d})\frac{\deg_{\sigma}(\ov{c})}{\rk_{\sigma}(\ov{c})}\,.
$$
It is clear that a representation $\ov{U}$ of $Q$ with the dimension vector $\ov{c}$ is (semi)stable if and only if all of its subobjects $\ov{U}'$ satisfy
$$
\theta^{\ov{c}}_{\sigma}\big(\ov{U}'\big)(\leq)<0\,.
$$
Simoultaneously $\theta^{\ov{c}}_{\sigma}(\ov{c})=0$ so one may apply \cite[Proposition 4.3]{King} to establish projectivity of the moduli scheme $M^{\sigma}_{\ov{c}}$.
\end{proof}
\begin{proof}[Checking {\cite[Assumption 5.2 (h)]{Jo21}}]
Quiver pairs are generally constructed after specifying additional data that includes a further acyclic quiver $\mathring{Q}$ whose sets of vertices and edges will be denoted by $(\mathring{V}, \mathring{E})$. The quiver $\mathring{Q}$ contains a special set of vertices labelled here by $\circ$. In fact, the general proof of wall-crossing in \cite[§10]{Jo21} only requires the quivers $\mathring{Q}=Q_{\text{Flag}}, Q_{\text{MS}}$  from \cite[(10.1), (10.57)]{Jo21}. They are given as follows:
\begin{center}
\includegraphics[scale=1]{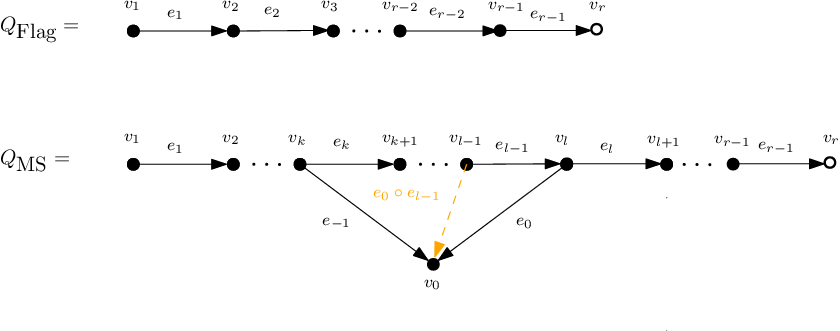}
\end{center}
The {\color{BurntOrange}dashed arrow} denotes a relation $r=e_0\circ e_{l-1}$ that generates $\mR$. Notice that they have a unique special vertex $\circ$. It will become useful to introduce the quiver
$$
\mathring{Q}_{r-1} = \big(\mathring{V}_{r-1},\mathring{E}_{r-1}\big)
$$
obtained from $\mathring{Q}$ by removing the vertex $\circ$ and the edge $e_{r-1}$.
  
Once $\mathring{Q}$ is chosen, the objects from \cite[Definition 5.5]{Jo21} for wall-crossing in the category $\rep(Q)$ are precisely the representations of 
$$
\mathring{Q}\sqcup_{\circ} Q = \big(\mathring{V}_{r-1}\sqcup V, \mathring{E}_{r-1}\sqcup V\sqcup E \big)\,.
$$
The edges labelled by vertices in $V$ start at $v_{r-1}$ and end at their respective vertex. Because this quiver is obtained roughly by gluing $Q$ to $\mathring{Q}$ along the vertex $\circ$, its dimension vectors also consist of two parts $\big(\mathring{\ov{d}}_{r-1}, \ov{d}\big)$ where $\mathring{\ov{d}}_{r-1}\in K^0(\mathring{Q}_{r-1}),\ov{d}\in K^0(Q)$, and the coefficients of $\mathring{\ov{d}}_{r-1}$ are denoted by $\mathring{d}_v$ for $v\in \mathring{V}_{r-1}$. 

The equation \cite[(5.21)]{Jo21} defines a stability condition on the category $\mA = \rep\big(\mathring{Q}\sqcup_{\circ} Q\big)$ after specifying
\begin{enumerate}
    \item a stability condition on $\rep(Q)$,
    \item a vector of stability parameters $\mathring{\ov{\mu}}_{r-1}\in \RR^{\mathring{V}_{r-1}}$ with the component corresponding to the vertex $v\in \mathring{V}_{r-1}$ denoted by $\mathring{\mu}_v$,
    \item a group homomorphism $\epsilon: K^0(Q)\to \RR$,
    \item and a \textit{rank function} $\rk: \Lambda_+\to \NN$ that satisfies $\rk(\ov{c}+\ov{d}) = \rk(\ov{c})+\rk(\ov{d})$.
\end{enumerate}
For the last piece of data, I will use $\rk_{\sigma}$ defined above. After giving the set $\RR\times \RR$ the alphabetical order and appending $\{\pm\infty\}$ with $-\infty<(a,b)<\infty$ for all $(a,b)\in \RR\times \RR$, one can define the following slope function:
$$
\sigma^{\epsilon}_{\mathring{\mu}}(\mathring{\ov{d}}_{r-1},\ov{d}) =\begin{cases}\renewcommand\arraystretch{1.7}
\begin{pmatrix}\deg_{\sigma}(\ov{d})/\rk_{\sigma}(\ov{d}) \\(\epsilon(\ov{d})  + \langle \mathring{\ov{\mu}}_{r-1},\mathring{\ov{d}}_{r-1}\rangle)/\rk_{\sigma}(\ov{d}) \end{pmatrix}&\text{ if }\ov{d}\neq 0\\
\\
 \infty&{\begin{array}{l}\textnormal{if }\ov{d}=0,\\\langle \mathring{\ov{\mu}}_{r-1}, \mathring{\ov{d}}_{r-1}\rangle>0\end{array}}\\
\\
 -\infty&\begin{array}{l}\textnormal{if }\ov{d}=0,\\\langle \mathring{\ov{\mu}}_{r-1}, \mathring{\ov{d}}_{r-1}\rangle\leq 0\end{array} \end{cases}\,.
$$
An object $\big(\mathring{\ov{U}}_{r-1}, \ov{U}\big)$ in $\mA$ is (semi)stable if every one of its subobjects $\big(\mathring{\ov{U}}'_{r-1}, \ov{U}'\big)$ satisfies $$\sigma^{\epsilon}_{\mathring{\mu}}\big(\mathring{\ov{U}}'_{r-1}, \ov{U}'\big)(\leq )<\sigma^{\epsilon}_{\mathring{\mu}}\big(\mathring{\ov{U}}_{r-1}, \ov{U}\big)\,.$$
For each $\big(\mathring{\ov{c}}_{r-1}, \ov{c}\big)$ satisfying $\ov{c}\neq 0$, the stability of a representation $\big(\mathring{\ov{U}}_{r-1}, \ov{U}\big)$ with this dimension vector can be alternatively expressed in terms of the character 
\begin{align*}
\vartheta^{\epsilon,\delta}_{\sigma,\mathring{\mu}}\big(\mathring{\ov{d}}_{r-1}, \ov{d}\big) = \deg_{\sigma}(\ov{d})+\delta \big(\epsilon(\ov{d})+&\langle \mathring{\ov{\mu}}_{r-1}, \mathring{\ov{d}}_{r-1}\rangle\big)\\-& \rk_{\sigma}(\ov{d})\frac{\deg_{\sigma}(\ov{c})+\delta \big(\epsilon(\ov{c})+\langle \mathring{\ov{\mu}}_{r-1}, \mathring{\ov{c}}_{r-1}\rangle\big)}{\rk_{\sigma}(\ov{c})}
\end{align*}
where $\delta>0$ is sufficiently small. Applying King's result shows that the moduli scheme of $\sigma^{\epsilon}_{\mathring{\mu}}$-stable objects with class $\big(\mathring{\ov{c}}_{r-1}, \ov{c}\big)$ in $\mA$ is projective whenever $\ov{c}\neq 0$, and there are no strictly semistables.
\end{proof}

 Note that $\mu$-stability on $\Rep(Q)$ can be expressed as an element $\sigma$ of $\Omega_{\Rep(Q)}$ with the stability function
$$
Z(\ov{d}) = -\sum_{v\in V}\mu_{v}d_v + i\sum_{v\in V}d_v\,.
$$
By repeating the arguments in the proof of Theorem \ref{thm:main} while working with an increasing $\mu$-stability, I conclude the second statement of the proposition.    
\end{proof}
\bibliography{refs.bib} 

\begin{thebibliography}{MOOP}

\bibitem[ABCH]{ABCH}
D. Arcara, A. Bertram, I. Coskun, and J. Huizenga, \emph{The minimal model program for the {H}ilbert scheme of points on {$\Bbb{P}^2$} and {B}ridgeland stability}, Adv. Math. \textbf{235} (2013), 580--626.

\bibitem[AM]{EMiles}
D. Arcara and E. Miles, \emph{Projectivity of {B}ridgeland moduli spaces on del {P}ezzo surfaces of {P}icard rank 2}, Int. Math. Res. Not. IMRN (2017), no.~11, 3426--3462.

\bibitem[BCFK]{grass}
A. Bertram, I. Ciocan-Fontanine, and B. Kim, \emph{Two proofs of a conjecture of {H}ori and {V}afa}, Duke Math. J. \textbf{126} (2005), no.~1, 101--136.

\bibitem[Bei]{beilinson}
A.~A. Beilinson, \emph{The derived category of coherent sheaves on {${\bf P}^n$}}, vol.~3, 1983/84, Selected translations, pp.~233--237.

\bibitem[Ben]{benson}
D.~J. Benson, \emph{Representations and cohomology. {I}}, second ed., Cambridge Studies in Advanced Mathematics, vol.~30, Cambridge University Press, Cambridge, 1998, Basic representation theory of finite groups and associative algebras.

\bibitem[BH]{BH}
A. Bojko and J. Huang, \emph{Equivariant Segre and Verlinde invariants for Quot schemes}, 2023, arXiv preprint, \url{https://arxiv.org/abs/2303.14266}.

\bibitem[Bla]{Blanc}
A. Blanc, \emph{Topological {K}-theory of complex noncommutative spaces}, Compos. Math. \textbf{152} (2016), no.~3, 489--555.

\bibitem[BLM]{BML}
A. Bojko, W. Lim, and M. Moreira, \emph{Virasoro constraints for moduli of sheaves and vertex algebras}, Invent. Math. (2024), published online first.

\bibitem[Boj1]{Bo21}
A. Bojko, \emph{Wall-crossing for zero-dimensional sheaves and Hilbert schemes of points on Calabi--Yau 4-folds}, 2021, arXiv preprint, \url{https://arxiv.org/abs/2102.01056}.

\bibitem[Boj2]{mytalk}
A. Bojko, \emph{An intuitive approach for sheaf-theoretic Virasoro constraints}, 2022, Youtube video, \url{https://www.youtube.com/watch?v=o80E6psv3_M}.

\bibitem[Bon1]{bondal}
A.~I. Bondal, \emph{Representations of associative algebras and coherent sheaves}, Izv. Akad. Nauk SSSR Ser. Mat. \textbf{53} (1989), no.~1, 25--44.

\bibitem[Bon2]{bondaltilt}
A.~I. Bondal, \emph{Helices, representations of quivers and {K}oszul algebras}, Helices and vector bundles, London Math. Soc. Lecture Note Ser., vol. 148, Cambridge Univ. Press, Cambridge, 1990, pp.~75--95.

\bibitem[Bor]{Borcherds}
R.~E. Borcherds, \emph{Vertex algebras, Kac-Moody algebras, and the Monster}, Proceedings of the National Academy of Sciences \textbf{83} (1986), no.~10, 3068--3071.

\bibitem[Bri1]{bridgeland}
T. Bridgeland, \emph{Stability conditions on triangulated categories}, Ann. of Math. (2) \textbf{166} (2007), no.~2, 317--345.

\bibitem[Bri2]{bridgelandK3}
T. Bridgeland, \emph{Stability conditions on {$K3$} surfaces}, Duke Math. J. \textbf{141} (2008), no.~2, 241--291.

\bibitem[Bu]{Bu23}
C. Bu, \emph{Enumerative invariants in self-dual categories. II. Homological invariants}, 2023, arXiv preprint, \url{https://arxiv.org/abs/2309.00056}.

\bibitem[DVV]{DVV}
R. Dijkgraaf, H. Verlinde, and E. Verlinde, \emph{Loop equations and {V}irasoro constraints in nonperturbative two-dimensional quantum gravity}, Nuclear Phys. B \textbf{348} (1991), no.~3, 435--456.

\bibitem[EHX]{ehx}
T. Eguchi, K. Hori, and C.-S. Xiong, \emph{Quantum cohomology and {V}irasoro algebra}, Phys. Lett. B \textbf{402} (1997), no.~1-2, 71--80.

\bibitem[EJX]{EJX}
T. Eguchi, M. Jinzenji, and C.-S. Xiong, \emph{Quantum cohomology and free-field representation}, Nuclear Phys. B \textbf{510} (1998), no.~3, 608--622.

\bibitem[Get]{getzler}
E. Getzler, \emph{The {V}irasoro conjecture for {G}romov-{W}itten invariants}, Algebraic geometry: {H}irzebruch 70 ({W}arsaw, 1998), Contemp. Math., vol. 241, Amer. Math. Soc., Providence, RI, 1999, pp.~147--176.

\bibitem[Giv]{givental}
A.~B. Givental, \emph{Gromov-{W}itten invariants and quantization of quadratic {H}amiltonians}, vol.~1, 2001, Dedicated to the memory of I. G. Petrovskii on the occasion of his 100th anniversary, pp.~551--568, 645.

\bibitem[GJT]{GJT}
J. Gross, D. Joyce, and Y. Tanaka, \emph{Universal structures in {$\mathbb C$}-linear enumerative invariant theories}, SIGMA Symmetry Integrability Geom. Methods Appl. \textbf{18} (2022), Paper No. 068.

\bibitem[HL]{HL}
D. Huybrechts and M. Lehn, \emph{The geometry of moduli spaces of sheaves}, Aspects of Mathematics, E31, Friedr. Vieweg \& Sohn, Braunschweig, 1997.

\bibitem[HRS]{HRS}
D. Happel, I. Reiten, and S.~O. Smal\o, \emph{Tilting in abelian categories and quasitilted algebras}, Mem. Amer. Math. Soc. \textbf{120} (1996), no.~575, viii+ 88.

\bibitem[Iri]{iritani}
H. Iritani, \emph{Convergence of quantum cohomology by quantum {L}efschetz}, J. Reine Angew. Math. \textbf{610} (2007), 29--69.

\bibitem[JK]{flag}
D. Joe and B. Kim, \emph{Equivariant mirrors and the {V}irasoro conjecture for flag manifolds}, Int. Math. Res. Not. (2003), no.~15, 859--882.

\bibitem[Joy1]{Jo17}
D. Joyce, \emph{Ringel–Hall style Lie algebra structures on the homology of moduli spaces}, 2019, Preprint, \url{https://people.maths.ox.ac.uk/joyce/hall.pdf}.

\bibitem[Joy2]{Jo21}
D. Joyce, \emph{Enumerative invariants and wall-crossing formulae in abelian categories}, 2021, arXiv preprint, \url{https://arxiv.org/abs/2111.04694}.

\bibitem[Joy3]{Jo23}
D. Joyce, \emph{Enumerative invariants of projective surfaces}, Preliminary version.

\bibitem[Kac]{Ka98}
V. Kac, \emph{Vertex algebras for beginners}, second ed., University Lecture Series, vol.~10, American Mathematical Society, Providence, RI, 1998.

\bibitem[Kaw]{Kawamata}
Y. Kawamata, \emph{Derived categories of toric varieties}, Michigan Math. J. \textbf{54} (2006), no.~3, 517--535.

\bibitem[Kin]{King}
A.~D. King, \emph{Moduli of representations of finite-dimensional algebras}, Quart. J. Math. Oxford Ser. (2) \textbf{45} (1994), no.~180, 515--530.

\bibitem[KLMP]{KPMW}
Y. Kononov, W. Lim, M. Moreira, and W. Pi, \emph{Cohomology rings of the moduli of one-dimensional sheaves on the projective plane}, 2024, arXiv preprint, \url{https://arxiv.org/abs/2403.06277}.

\bibitem[Kon]{kontsevich}
M. Kontsevich, \emph{Intersection theory on the moduli space of curves and the matrix {A}iry function}, Comm. Math. Phys. \textbf{147} (1992), no.~1, 1--23.

\bibitem[LL]{LLVA}
J. Lepowsky and H. Li, \emph{Introduction to Vertex Operator Algebras and Their Representations}, Introduction to Vertex Operator Algebras and Their Representations, Birkh{\"a}user, 2004.

\bibitem[LM]{MW}
W. Lim and M. Moreira, \emph{Virasoro constraints and representations for quiver moduli spaces}, 2024, arXiv preprint, \url{https://arxiv.org/abs/2403.13982}.

\bibitem[Mac]{Macioca13}
A. Maciocia, \emph{Computing the walls associated to {B}ridgeland stability conditions on projective surfaces}, Asian J. Math. \textbf{18} (2014), no.~2, 263--279.

\bibitem[Mil]{EMilesthesis}
E. Miles, \emph{Bridgeland Stability of Line Bundles on Smooth Projective Surfaces}, Ph.D. thesis, Colorado State University, 2014.

\bibitem[Mir]{mirzakhani}
M. Mirzakhani, \emph{Simple geodesics and {W}eil-{P}etersson volumes of moduli spaces of bordered {R}iemann surfaces}, Invent. Math. \textbf{167} (2007), no.~1, 179--222.

\bibitem[Moc]{mochizuki}
T. Mochizuki, \emph{Donaldson type invariants for algebraic surfaces}, Lecture Notes in Mathematics, vol. 1972, Springer-Verlag, Berlin, 2009.

\bibitem[MOOP]{moop}
M. Moreira, A. Oblomkov, A. Okounkov, and R. Pandharipande, \emph{Virasoro constraints for stable pairs on toric threefolds}, Forum Math. Pi \textbf{10} (2022), Paper No. e20.

\bibitem[Mor]{moreira}
M. Moreira, \emph{Virasoro conjecture for the stable pairs descendent theory of simply connected 3-folds (with applications to the {H}ilbert scheme of points of a surface)}, J. Lond. Math. Soc. (2) \textbf{106} (2022), no.~1, 154--191.

\bibitem[MS]{Macrilecture}
E. Macr\`{i} and B. Schmidt, \emph{Lectures on {B}ridgeland stability}, Moduli of curves, Lect. Notes Unione Mat. Ital., vol.~21, Springer, Cham, 2017, pp.~139--211.

\bibitem[Nak]{Nakajimaquiver}
H. Nakajima, \emph{Varieties associated with quivers}, Representation theory of algebras and related topics ({M}exico {C}ity, 1994), CMS Conf. Proc., vol.~19, Amer. Math. Soc., Providence, RI, 1996, pp.~139--157.

\bibitem[OOP]{GW/PT}
A. Oblomkov, A. Okounkov, and R. Pandharipande, \emph{G{W}/{PT} descendent correspondence via vertex operators}, Comm. Math. Phys. \textbf{374} (2020), no.~3, 1321--1359.

\bibitem[OP1]{OP}
A. Okounkov and R. Pandharipande, \emph{Virasoro constraints for target curves}, Invent. Math. \textbf{163} (2006), no.~1, 47--108.

\bibitem[OP2]{OPwitten}
A. Okounkov and R. Pandharipande, \emph{Gromov-{W}itten theory, {H}urwitz numbers, and matrix models}, Algebraic geometry---{S}eattle 2005. {P}art 1, Proc. Sympos. Pure Math., vol.~80, Amer. Math. Soc., Providence, RI, 2009, pp.~325--414.

\bibitem[Orl]{Orlov}
D.~O. Orlov, \emph{Projective bundles, monoidal transformations, and derived categories of coherent sheaves}, Izv. Ross. Akad. Nauk Ser. Mat. \textbf{56} (1992), no.~4, 852--862.

\bibitem[Rei]{reineke}
M. Reineke, \emph{Framed quiver moduli, cohomology, and quantum groups}, J. Algebra \textbf{320} (2008), no.~1, 94--115.

\bibitem[Rek]{rekuski}
N. Rekuski, \emph{Contractibility of the Geometric Stability Manifold of a Surface}, 2023, arXiv preprint, \url{https://arxiv.org/abs/2310.10499}.

\bibitem[Tel]{teleman}
C. Teleman, \emph{The structure of 2{D} semi-simple field theories}, Invent. Math. \textbf{188} (2012), no.~3, 525--588.

\bibitem[TV]{TV07}
B. To\"{e}n and M. Vaqui\'{e}, \emph{Moduli of objects in dg-categories}, Ann. Sci. \'{E}cole Norm. Sup. (4) \textbf{40} (2007), no.~3, 387--444.

\bibitem[vB]{bree}
D. van Bree, \emph{Virasoro constraints for moduli spaces of sheaves on surfaces}, Forum Math. Sigma \textbf{11} (2023), Paper No. e4, 35.

\bibitem[Wit]{witten}
E. Witten, \emph{Two-dimensional gravity and intersection theory on moduli space}, Surveys in differential geometry ({C}ambridge, {MA}, 1990), Lehigh Univ., Bethlehem, PA, 1991, pp.~243--310.

\bibitem[Woo]{Wolf}
M. Woolf, \emph{Nef and Effective Cones on the Moduli Space of Torsion Sheaves on the Projective Plane}, 2013, arXiv preprint, \url{https://arxiv.org/abs/1305.1465}.

\end{thebibliography}
\bibliographystyle{mybstfile.bst}
\end{document}